\colorlet{highlight}{black}
\colorlet{secondrevision}{black}
\colorlet{thirdrevision}{black}
\newtheorem{thm}{Theorem}[subsection]
\newtheorem{prin}[thm]{Principle}
\newtheorem{prop}[thm]{Proposition}
\theoremstyle{definition}
\newtheorem{defn}[thm]{Definition}
\theoremstyle{remark}
\newtheorem{rmk}[thm]{Remark}
\newtheorem{example}[thm]{Example}
\newcommand{\cA}{\mathcal{A}}
\newcommand{\cB}{\mathcal{B}}
\newcommand{\cC}{\mathscr{C}}
\newcommand{\calC}{\mathcal{C}}
\newcommand{\cD}{\mathcal{D}}
\newcommand{\cE}{\mathcal{E}}
\newcommand{\cF}{\mathcal{F}}
\newcommand{\cG}{\mathscr{G}}
\newcommand{\cO}{\mathcal{O}}
\newcommand{\cW}{\mathcal{W}}
\newcommand{\cY}{\mathcal{Y}}
\newcommand{\K}{\mathbb{K}}
\newcommand{\R}{\mathbb{R}}
\newcommand{\Z}{\mathbb{Z}}
\newcommand{\Ch}{\mathrm{Ch}}
\newcommand{\GL}{\mathrm{GL}}
\newcommand{\point}{\mathbf{pt}}
\newcommand{\bimod}[2]{#1\text{--mod--}#2}
\newcommand{\Sympcat}{\mathbf{Symp}}
\newcommand{\DGcat}{\mathbf{DGcat}}
\newcommand{\Ainfcat}{\mathbf{A_\infty cat}}
\DeclareMathOperator{\Ob}{Ob}
\DeclareMathOperator{\Hom}{Hom}
\DeclareMathOperator{\Homtwo}{2-Hom}
\DeclareMathOperator{\shom}{hom}
\DeclareMathOperator{\cHom}{\mathscr{H}om}
\DeclareMathOperator{\Mod}{Mod}
\DeclareMathOperator{\Perf}{Perf}
\DeclareMathOperator{\Fun}{Fun}
\DeclareMathOperator{\Sh}{Sh}
\DeclareMathOperator{\Symp}{Symp}
\DeclareMathOperator{\Coh}{Coh}
\DeclareMathOperator{\End}{End}
\DeclareMathOperator{\Aut}{Aut}
\DeclareMathOperator{\Pic}{Pic}
\DeclareMathOperator{\Ad}{Ad}
\newcommand{\op}{\mathrm{op}}
\newcommand{\co}{\mathrm{co}}
\newcommand{\coop}{\mathrm{co,op}}
\newcommand{\gp}{\mathrm{gp}}
\newcommand{\bDelta}{\boldsymbol{\Delta}}
\newcommand{\bDeltaint}{\boldsymbol{\Delta}_\mathrm{int}}
\newcommand{\id}{\mathrm{id}}
\newcommand{\Set}{\mathrm{Set}}
\newcommand{\Cat}{\mathbf{Cat}}
\newcommand{\Map}{\mathrm{Map}}
\newcommand{\VectK}{\mathrm{Vect}_{\K}}
\newcommand{\Man}{\mathrm{Man}}
\newcommand{\cEnd}{\mathcal{E}\mathrm{nd}}
\newcommand{\coker}{\mathrm{coker}}
\newcommand{\Xbf}[1]{X(\mathbf{#1})}
\title[Poisson Geometry and Monoidal Fukaya Categories]{Poisson Geometry, Monoidal Fukaya Categories, and Commutative Floer Cohomology Rings}
\author{James Pascaleff}
\address{University of Illinois at Urbana-Champaign}
\date{First version March 2018. Major revision January 2022. This version March 2023}
\email{jpascale@illinois.edu}
\begin{document}

\maketitle

\begin{abstract}
  We describe connections between concepts arising in Poisson geometry and the theory of Fukaya categories. The key concept is that of a symplectic groupoid, which is an integration of a Poisson manifold. The Fukaya category of a symplectic groupoid is monoidal, and it acts on the Fukaya categories of the symplectic leaves of the Poisson structure. Conversely, we consider a wide range of known monoidal structures on Fukaya categories and observe that they all arise from symplectic groupoids. We also use the picture developed to resolve a conundrum in Floer theory: why are some Lagrangian Floer cohomology rings commutative? 
\end{abstract}

\section{Introduction}
\label{sec:intro}

The concept of a Poisson manifold, namely a manifold equipped with a Poisson bracket $\{\cdot,\cdot\}$ on its space of smooth functions, is a natural generalization of the concept of a symplectic manifold. Symplectic forms on a given manifold $M$ correspond bijectively to the Poisson brackets that are nondegenerate in the sense that every vector $X \in T_pM$ is generated by a derivation of the form $g \mapsto \{f,g\}$ for some function $f$. Whereas the local structure theory of symplectic manifolds is essentially trivial due to the Darboux theorem, the local structure theory of Poisson manifolds is extremely complicated; for instance, it contains the theory of arbitrary Lie algebras.

As natural as the generalization from symplectic to Poisson structures is from the point of view of differential geometric structures on manifolds, from the point of view of Floer theory, it can be argued that the generalization is completely \emph{unnatural}. There is a reason that we have a good theory of pseudo-holomorphic curves in symplectic manifolds \cite{gromov85}: while one may reasonably develop the local theory of pseudo-holomorphic curves in any almost complex manifold, in order to have compact moduli spaces, one needs to control the energy of the curves. Gromov's insight was that the natural geometric way to get such control is to assume that the almost complex structure is tamed by a symplectic form. It is clear from the argument that the nondegeneracy of the symplectic form is really essential, and any attempt to weaken this condition (such as in some versions of symplectic field theory) requires great care.

Nevertheless, by broadening the perspective we can see that there are other tracks to follow. It turns out that Poisson geometers do not ignore symplectic structures as trivial. In fact, a modern perspective introduced by Weinstein is that a powerful way to study a Poisson manifold is to associate to it a symplectic manifold of twice the dimension, a \emph{symplectic realization}. The nicest symplectic realizations are the \emph{symplectic integrations}, and such an object by definition carries the structure of a \emph{symplectic groupoid}. 

Given a symplectic groupoid $(G,\omega)$, we can consider the Fukaya $A_{\infty}$-category $\cF(G)$ of the underlying symplectic manifold. The additional groupoid structure gives us additional higher-categorical structure on $\cF(G)$, namely, it makes the Fukaya $A_{\infty}$-category $\cF(G)$ monoidal, and equips it with a duality functor. These functors are represented by Lagrangian correspondences that encode the groupoid structure.

This idea is not new: M.~Gualtieri has informed the author that he has presented this idea as early as 2009 \cite{gualtieri-talk} (see also section \ref{sec:coisotropic-fukaya}), and related ideas appear in the 2014 ICM article by C.~Teleman \cite{teleman-gauge-icm} (see section \ref{sec:gdual}). It is also quite possible that the same idea has occurred to others. This note has several aims:
\begin{enumerate}
\item to explicate how the theory of functors between Fukaya categories from Lagrangian correspondences works in the context of symplectic groupoids, leading essentially to the notion of a monoidal category with duality, 
\item to survey how this idea neatly ties together a range of known monoidal and duality structures on Fukaya categories, as well as monoidal module actions on Fukaya categories,
\item in what is perhaps the most original contribution of the note, to show that the theory of symplectic groupoids allows us to resolve a conundrum in Floer theory, namely, the question of why some Lagrangian Floer cohomology rings are commutative while others are not. Our answer is that commutativity is explained by the presence of a symplectic groupoid structure, together with standard arguments from the theory of monoidal categories.
\end{enumerate}

\subsection{Outline}

The basic conceptual move is to change focus from Poisson manifolds to so-called \emph{symplectic groupoids}, introduced by Weinstein. A symplectic groupoid is a differential-geometric object that has both an underlying symplectic manifold $(G,\omega)$ as well as an associated Poisson manifold $(M,\pi)$; note that these two structures live on different underlying manifolds. Roughly speaking, the relationship between $(G,\omega)$ and $(M,\pi)$ is parallel to the relationship between a Lie group and its Lie algebra. Also of note is that not every Poisson manifold arises this way; by definition $(M,\pi)$ is \emph{integrable} if it does.

The groupoid structure $(G,\omega)$ is encoded by structure maps between $G$ and $M$. In accordance with Weinstein's creed ``Everything is a Lagrangian,'' these structure maps are also encoded by Lagrangian correspondences between various copies of $G$ \textcolor{highlight}{and $\overline{G}$, which is $G$ with the sign of the symplectic form reversed.} By the Wehrheim-Woodward theory of quilts, these correspondences induce functors on the Fukaya $A_{\infty}$-category $\cF(G)$. This immediately leads to the following guiding principle.
\begin{prin}
  Let $(G,\omega)$ be a symplectic manifold, and assume that there is a good theory of Fukaya categories and functors from Lagrangian correspondences for $G$. Then a symplectic groupoid structure on $(G,\omega)$ induces
  \begin{itemize}
  \item a monoidal product $\otimes :\cF(G)\times \cF(G) \to \cF(G)$,
  \item a distinguished object $\cO \in \Ob \cF(G)$, which is a unit object for $\otimes$, and 
  \item an equivalence functor $\cD: \cF(G) \to \cF(\overline{G})$, to be thought of as a kind of duality.
  \end{itemize}
\end{prin}

\textcolor{highlight}{Our definition of what it means to have a ``good theory'' of Lagrangian correspondences is made precise in Section \ref{sec:good-sol}.}

If this were the whole story, we might be led to believe that Poisson geometry itself is a distraction, and the real story is about symplectic groupoids (which, among other properties, happen to determine Poisson structures). But there is more structure present. The underlying Poisson manifold $(M,\pi)$ carries a singular integrable distribution $\pi^\#(T^*M) \subseteq TM$. The leaves of the corresponding foliation are symplectic manifolds. These leaves are precisely the isomorphism classes of objects in the groupoid (thought of as a category). Hence if $F \subseteq M$ is a leaf $G$ acts on $F$. In line with the creed, this is represented by a Lagrangian correspondence from $G \times F$ to $F$. Thus we find a principle that relates 

\begin{prin}
  Let $(G,\omega)$ be a symplectic groupoid integrating $(M,\pi)$, and let $F \subseteq M$ be a symplectic leaf. Assuming a good theory of Lagrangian correspondences for $G$ and $F$, we find that
  \begin{itemize}
  \item There is a functor $\rho : \cF(G) \times \cF(F) \to \cF(F)$ that makes $\cF(F)$ into a monoidal module $A_{\infty}$-category for the monoidal $A_{\infty}$-category $\cF(G)$.
  \end{itemize}
\end{prin}

It is possible to go further, and study how the Morita theory of symplectic groupoids relates the monoidal Morita theory of monoidal categories, but we will limit ourselves to exploring the two principles above for the purposes of this paper. Another connection between Poisson geometry and Floer theory arises from the possibility to promote coisotropic submanifolds of $(M,\pi)$ to \emph{Lagrangian subgroupoids} of $(G,\omega)$, see Section \ref{sec:coisotropic-fukaya}.

While we do not claim such constructions are possible in anything approaching the full generality of all symplectic groupoids (in particular, we have not spelled out any geometric hypothesis on $(G,\omega)$ that makes it possible to define the Fukaya category and have a good theory of functors from Lagrangian correspondences), we will investigate it in several cases to show that it reproduces known or at least expect monoidal structures in the theory of Fukaya categories, thereby unifying them as instances of this guiding principle. 

\begin{rmk}
  We can attempt to couch our perspective in the language of categorification. To a mathematical object, one may attempt to attach a ``category number'' that measures ``how categorified'' the object is. The following represents the author's opinion. Manifolds have category number zero, because the intersection of two cycles is a number. Symplectic manifolds have category number one, because the intersection of two Lagrangian submanifolds is a graded vector space whose Euler characteristic recovers the intersection number of the corresponding cycles. Symplectic groupoids have category number two, because there is an additional monoidal structure.
\end{rmk}

The other direction we shall explore is that, in the presence of a monoidal structure, standard arguments from the theory of monoidal categories can be brought to bear on the calculation of Floer cohomology rings, in particular, we find that this framework gives a geometric \emph{a priori} reason why several Floer cohomology rings are graded commutative, see Section \ref{sec:commutativity}.

Lastly, a warning and an apology: at many points in this note, we will indicate how something ``should'' work, without giving complete details of the construction, or even necessarily a precise statement. These statements should be understood to be of a speculative or conjectural nature. It is my belief that all such statements in this note are correct in the sense that they can be set up and proved in all reasonable cases by an elaboration of known constructions in Floer theory. While I can understand that some may object to such an approach, this note is based on ideas that are rather simple once you see them, and my attitude is that the essential simplicity of the ideas should not be obscured by a premature effort to construct everything in complete detail.

Another possible justification for this formal approach is that, though this author envisions that all of the constructions described here can be made in terms of pseudo-holomorphic curve theories, the same ideas should apply, at least in outline, to any other ``version of the Fukaya category:'' $\cD$-modules or deformation quantization modules, or Nadler's approach in terms of Lagrangian skeleta and categorical Morse homology, or Tamarkin's approach in terms of microlocal sheaves. The ideas should also apply to any version of the Fukaya category yet to be conceived. Fundamentally, all that is required is that one can associate categories to symplectic manifolds, and functors to Lagrangian correspondences. 

\textcolor{highlight}{In this revised version, we have included an Appendix to make our formal arguments more precise. The Appendix recalls precise definitions of monoidal $A_{\infty}$- and $\infty$-categories (in several versions), as well the approach to $(\infty,2)$-categories via Segal conditions. As is well-known, $2$-categories and monoidal categories live at essentially the same level, so it is not surprising that they may be given parallel treatment. This also makes it possible for us to assign a precise meaning to the phrase ``good solution to the composition problem'' at the $(\infty,2)$-categorical level. The length of the Appendix is symptomatic of many expositions of higher category theory. These notions are applied to symplectic groupoids in Sections \ref{sec:good-sol} and \ref{sec:new-monoids}.}

\subsection{Relation to other work}
\label{sec:previous}

I am not the first person to consider the idea that the Fukaya category of a symplectic groupoid should be monoidal, although it seems this idea is not widely appreciated. Marco Gualtieri informs me that he has advocated for this idea in talks as early as 2009. The thesis of Aleksandar Suboti\'{c} \cite{subotic-thesis} uses the groupoid structure of a torus fibration in exactly this way. The work of Constantin Teleman on $G$-equivariant Fukaya categories \cite{teleman-gauge-icm} involves essentially the action of the symplectic groupoid $T^*G$ on a Hamiltonian $G$-manifold (see the ``proof'' of Conjecture 2.9, \textit{op.~cit.}). This action was exploited to great effect by Jonny Evans and Yank\i{} Lekili in their generation results for Hamiltonian $G$-manifolds \cite{talking-bout-my-g-generation}. There is also a MathOverflow thread\footnote{\texttt{https://mathoverflow.net/questions/19041/a-poisson-geometry-version-of-the-fukaya-category}} where related ideas are discussed.

In the recent work of D. Ben-Zvi and S. Gunningham \cite{benzvi-gunningham}, a certain symplectic groupoid (the group scheme $J$ of regular centralizers in a complex reductive group) appears in connection with the symmetries of categorical representations and Ng\^{o}'s work on the fundamental lemma \cite{ngo}. While Ben-Zvi and Gunningham use $\cD$-modules as their version of $A$-branes, it is natural to conjecture that their picture has an interpretation in any version of the Fukaya category; see \cite[Remark 2.7]{benzvi-gunningham}.

Further, I think it likely that the basic idea has occurred to other people. If you are one of those people, perhaps the point of this note is that the structure one obtains on the Fukaya category is already interesting even in cases where the underlying Poisson structure is in some sense uninteresting (most of the examples we consider have constant rank).

\subsection{Acknowledgments}
\label{sec:ack}

I would like to thank Rui Loja Fernandes for introducing me to Poisson geometry and symplectic groupoids. Paul Seidel provided crucial suggestions, and David Jordan provided answers to my very basic questions regarding monoidal categories. I would also like to thank Marco Gualtieri for sharing his perspective, as well as David Ben-Zvi and Sam Gunningham for sharing their work with me.

This note is based on a talk given on September 24, 2017 at the AMS Sectional Meeting in Orlando, Florida, as well as a talk given on December 1, 2017 at the workshop ``Categorification, Representation Theory and Symplectic Geometry'' at the Hausdorff Research Institute for Mathematics in Bonn, Germany. I thank Basak Gurel and Viktor Ginzburg, the organizers of the special session in Orlando, and Anne-Laure Thiel and Daniel Tubbenhauer, the organizers of the workshop in Bonn, for the opportunity to speak.

Part of this work was completed while the author was a member of the Institute for Advanced Study during the 2016--17 special year on Homological Mirror Symmetry. The author was partially supported by NSF grant DMS-1522670.
 
\textcolor{highlight}{In preparing this revised version, I am grateful to Nate Bottman for conversations that spurred me to understand precisely how the notions of monoid objects in higher category theory could be applied to the problem of monoidal Fukaya categories. I also benefited from conversations with Ezra Getzler about simplicial objects and derived algebraic geometry. I thank the referee for many thoughtful comments and suggestions that improved the paper considerably.}

\textcolor{highlight}{\subsection{Terminology}
  \label{sec:terminology}
  Unless otherwise specified, when we refer to the ``Fukaya category'' we mean an $A_{\infty}$-category, and related notions such as ``modules'' or ``functors'' are the $A_{\infty}$ versions. The phrase ``cohomology-level Fukaya category'' refers to the ordinary category obtained by taking the degree-zero cohomology of all morphism complexes; this is also known as the homotopy category of the Fukaya category or the Donaldson category. Other homotopy-coherent categorical notions, such as $\infty$-categories, are explicitly marked. See the Appendix for more background information.}

\section{Symplectic groupoids and integrations}
\label{sec:groupoids}

\subsection{Definitions}
\label{sec:groupoid-defs}

First we begin with the basic category-theoretic notions.
\begin{defn}
  A \emph{groupoid} is a small category in which all morphisms are invertible.
\end{defn}
A groupoid may be presented as follows: Given are two sets $M$ and $G$. $M$ is the set of objects, and $G$ is the set of morphisms, that is, the disjoint union of all the morphism spaces between all pairs of objects. Also given is a map $s: G \to M$ that takes a morphism to its source object, a map $t: G \to M$ that takes a morphism to its target object, a map $e : M \to G$ that takes an object to its identity morphism, a map $i : G \to G$ that takes a morphism to its inverse, and a map $m : G\times_{s,t} G \to G$ that takes a pair of composable morphisms to their composition. The statement that these data form a category in which all morphisms are invertible can then be formulated as a list of axioms that $s,t,e,i,m$ must satisfy.

The corresponding ``Lie'' notion just involves replacing sets and functions with smooth manifolds and smooth maps.
\begin{defn}
  A \emph{Lie groupoid} is presented as $(G,M,s,t,e,i,m)$ where $G$ and $M$ are endowed with the structure of smooth manifolds, and all structural maps are smooth maps.
\end{defn}

Now comes a crucial concept introduced by Weinstein \cite{weinstein}. 
\begin{defn}
  Let $(G,M,s,t,e,i,m)$ be a Lie groupoid. A \emph{multiplicative symplectic structure} on $G$ is a symplectic form $\omega \in \Omega^2(G)$ such that
  \begin{equation}
    m^*\omega = \pi_1^*\omega + \pi_2^*\omega
  \end{equation}
  holds as an identity in $\Omega^2(G\times_{s,t}G)$. A \emph{symplectic groupoid} consists of a Lie groupoid with a multiplicative symplectic structure.
\end{defn}
For the most part we shall notate a symplectic groupoid as $(G,\omega)$, suppressing the rest of the groupoid structure. This actually does have the potential to cause confusion, because there are pairs $(G,\omega)$ where $\omega$ is multiplicative for more than one groupoid structure on $G$. We hope the reader will be able to understand what is intended from context.

\subsection{Integrability of Poisson manifolds}
\label{sec:integrability}

We now recall the connection to Poisson geometry. This material is well-known, but we include it for context.

Associated to a Lie groupoid $(G,M,s,t,e,i,m)$, there is associated a corresponding Lie algebroid, defined by linearization of the groupoid structure along the image of the identity section $e: M \to G$. This is analogous to the construction of the Lie algebra associated to a Lie group, which is precisely the case where $M$ is a single point. The algebroid consists of $(M,E,a, [\cdot,\cdot])$, where $M$ is as before, $E$ is a vector bundle on $M$, $a : E \to TM$ is a map of vector bundles over $M$, and $[\cdot,\cdot]$ is a bracket on sections of $E$, which satisfy a list of axioms obtained by differentiation of the groupoid axioms. A Lie algebroid is called \emph{integrable} if it arises as the associated Lie algebroid of some Lie groupoid, and we say that the Lie groupoid is \emph{an integration} of the Lie algebroid. Integrations do not necessarily exist nor are they necessarily unique when they do. This area has been much studied; see for instance \cite{crainic-fernandes-lie,crainic-fernandes-poisson} and references therein.

On the other hand, a Poisson manifold $(M,\pi)$ has an associated Lie algebroid, namely $E = T^*M$, $\pi^\#: T^*M \to TM$, and $[\alpha,\beta] = d\pi(\alpha,\beta)$. When $(G,\omega)$ is a symplectic groupoid, the associated Lie algebroid is of this form. That is to say, given a symplectic groupoid $(G,\omega)$ there is a unique Poisson structure $\pi$ on $M$ such that $G$ is an integration of the Lie algebroid associated to this Poisson structure. We then say that $(G,\omega)$ is a \emph{symplectic integration} of $(M,\pi)$. A Poisson manifold $(M,\pi)$ is called \emph{integrable} if it admits a symplectic integration. Integrations are not unique when they exist, but the $s$-simply connected\footnote{\textcolor{highlight}{\emph{$s$-simply connected} means that the fibers of $s : G\to M$ are simply connected.}} integration is unique (analogous to the simply connected integration of a Lie algebra).

Since the main object of study in this paper is a symplectic groupoid, all Poisson manifolds that appear are integrable. Our perspective is that a symplectic groupoid is a symplectic manifold with an extra structure that, in particular, encodes an integrable Poisson structure on another manifold. 

\subsection{Action on the symplectic leaves}
\label{sec:action}

Given a groupoid $G$, the set of objects $M$ is partitioned into isomorphism classes. If $F \subseteq M$ is one such isomorphism class, then $G$ acts on $F$ in the sense that there is a map
\begin{equation}
  a : G \times_M F \to F
\end{equation}
where $G \times_M F$ is the fiber product of $s : G \to M$ and the inclusion $F \to M$. This map sends a pair $(g,x) \in G\times F$ such that $s(g) = x$ to $t(g) \in F$. 

In the context of symplectic groupoids, the isomorphism classes are the leaves of the symplectic foliation on $(M,\pi)$. Thus a symplectic groupoid acts on the leaves of the symplectic foliation.

\subsection{Everything is a Lagrangian}
\label{sec:everything}

The structure of a symplectic groupoid is very rich from the point of view of Lagrangian correspondences. The condition that the symplectic structure be multiplicative translates into the condition that the graph of multiplication is a Lagrangian correspondence. Let us recall the notation that if $(G,\omega)$ is a symplectic manifold, then we write $\overline{G}$ to mean $(G,-\omega)$.

\begin{prop}
\label{prop:everything-lagrangian}
  Let $(G,M,s,t,e,i,m,\omega)$ be a symplectic groupoid. Then
  \begin{enumerate}
  \item the graph of multiplication
    \begin{equation*}
      \mathsf{m} = \{(x,y,z)\mid z = m(x,y)\}
    \end{equation*}
    is Lagrangian in $\overline{G}\times \overline{G} \times G$;
  \item the identity section $e : M \to G$ is a Lagrangian embedding, so 
    \begin{equation*}
      \mathsf{e} = \{e(x) \mid x \in M\}
    \end{equation*}
    is a Lagrangian submanifold of $G$;
  \item the inversion map $i : G\to G$ is an antisymplectomorphism, so its graph
    \begin{equation*}
      \mathsf{i} = \{(x,i(x)) \mid x \in G\}
    \end{equation*}
    is Lagrangian in $G \times G$.
  \end{enumerate}
\end{prop}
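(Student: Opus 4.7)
The plan is to let the multiplicativity identity $m^*\omega = \pi_1^*\omega + \pi_2^*\omega$ on $G \times_{s,t} G$ do essentially all the work, with the Lagrangian property of $\mathsf{e}$ and $\mathsf{m}$ (equivalently, the dimension count $\dim G = 2\dim M$) established by pinching from both sides.

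For (1), I would parametrize $\mathsf{m}$ by the fiber product via $(x,y) \mapsto (x,y,m(x,y))$. The symplectic form on $\overline{G} \times \overline{G} \times G$ is $-\pi_1^*\omega - \pi_2^*\omega + \pi_3^*\omega$, and its pullback under this parametrization is exactly $m^*\omega - \pi_1^*\omega - \pi_2^*\omega$, the multiplicativity defect, so it vanishes. Hence $\mathsf{m}$ is isotropic of dimension $2\dim G - \dim M$ (using that $s,t$ are submersions, which is built into the Lie groupoid definition).

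For (2), the trick is to exploit the diagonal section $\Delta : M \to G \times_{s,t} G$, $\Delta(x) = (e(x), e(x))$, which satisfies $m \circ \Delta = e$ because identity morphisms compose to themselves. Pulling the multiplicativity identity back along $\Delta$ yields $e^*\omega = 2 e^*\omega$, forcing $e^*\omega = 0$, so $\mathsf{e}$ is isotropic of dimension $\dim M$. The dimension count is then forced by combining the two isotropic bounds: from (1), isotropy of $\mathsf{m}$ inside an ambient manifold of dimension $3\dim G$ gives $\dim M \geq \tfrac{1}{2}\dim G$, while isotropy of $\mathsf{e}$ in $G$ gives $\dim M \leq \tfrac{1}{2}\dim G$. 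Equality promotes $\mathsf{m}$ and $\mathsf{e}$ simultaneously to Lagrangians.

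For (3), I would use the section $\Gamma : G \to G \times_{s,t} G$, $g \mapsto (g, i(g))$, which is well-defined since $t(i(g)) = s(g)$ and which satisfies $m \circ \Gamma = e \circ s$. Using $e^*\omega = 0$ from (2), the right-hand side pulls back trivially, while multiplicativity turns the left-hand side into $\omega + i^*\omega$; hence $i^*\omega = -\omega$ and $i$ is an antisymplectomorphism. The pullback of $\pi_1^*\omega + \pi_2^*\omega$ to $\mathsf{i}$ via $g \mapsto (g, i(g))$ is then $\omega + i^*\omega = 0$, so $\mathsf{i}$ is isotropic of dimension $\dim G = \tfrac{1}{2}\dim(G\times G)$, hence Lagrangian. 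The only substantive step is the two-sided dimension bound in (2); everything else is formal manipulation of one identity along carefully chosen embeddings, and I do not anticipate any real obstacle.
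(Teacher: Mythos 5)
Your argument is correct and complete. The paper itself states this proposition without proof (it is classical, going back to Weinstein and Coste--Dazord--Weinstein), and your derivation is essentially the standard one: pull the multiplicativity identity $m^*\omega = \pi_1^*\omega + \pi_2^*\omega$ back along well-chosen sections of the fiber product to get all the isotropy statements, then pin down $\dim G = 2\dim M$ by playing the two isotropic-dimension inequalities (from $\mathsf{m}$ and from $\mathsf{e}$) against each other. That last step is the one genuinely nontrivial observation, and you have it right; it is also what makes the proof self-contained rather than quoting $\dim G = 2\dim M$ as a known property of symplectic groupoids. The only blemish is a convention slip in (3): depending on whether $m(g,h)$ is defined when $s(g)=t(h)$ or $t(g)=s(h)$, one gets $m\circ\Gamma = e\circ t$ rather than $e\circ s$, but since $e^*\omega=0$ either way the conclusion $i^*\omega=-\omega$ is unaffected. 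You might also record explicitly that the fiber product $G\times_{s,t}G$ is a manifold of dimension $2\dim G - \dim M$ because $s$ and $t$ are required to be submersions in the definition of a Lie groupoid, and that $e$ is automatically an embedding as a section of the submersion $s$; you gesture at both points and they are genuinely needed.
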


An analogous result holds for the action of $(G,\omega)$ on the symplectic leaves of $M$.

\begin{prop}
  Let $G,M$, etc., as above, and let $F \subseteq M$ be an isomorphism class. Then $F$ carries a symplectic structure $\omega_F$ induced by the Poisson structure of $M$, and the graph of the action
  \begin{equation*}
    \mathsf{a} = \{(g,x,y) \mid s(g) = x, t(g) = y\} = \bigcup_{x,y \in F} \Hom_G(x,y)
  \end{equation*}
  is Lagrangian in $\overline{G}\times \overline{F} \times F$.
\end{prop}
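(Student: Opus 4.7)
The plan is to verify directly that $\mathsf{a}$ is isotropic of the right dimension, by decomposing tangent vectors into $\ker ds$ and $\ker dt$ components and invoking three standard facts about symplectic groupoids: (i) each symplectic leaf $F\subseteq M$ inherits a symplectic form $\omega_F$ whose inverse bivector is $\pi|_F$; (ii) the target $t\colon G\to M$ is Poisson and the source $s\colon G\to M$ is anti-Poisson; and (iii) at every $g\in G$ one has the orthogonality $(\ker ds)_g^\omega = (\ker dt)_g$, which is a consequence of the multiplicativity of $\omega$ together with the Lagrangianity of $\mathsf{e}$ from the preceding proposition.

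For the dimension count, set $n=\dim M$ and $2k=\dim F$. Projection to $G$ identifies $\mathsf{a}$ with $s^{-1}(F) = t^{-1}(F)$, the equality holding because the orbits of $G$ are exactly the symplectic leaves; this submanifold has dimension $n+2k$, which is half of $\dim(\overline{G}\times\overline{F}\times F)$. Because the orbit of $x$ is $F$, the differential $dt$ sends $(\ker ds)_g$ onto $T_yF$ and $ds$ sends $(\ker dt)_g$ onto $T_xF$, and a short dimension chase yields
\begin{equation*}
T_g\mathsf{a} = (\ker ds)_g + (\ker dt)_g;
\end{equation*}
thus every $V\in T_g\mathsf{a}$ admits a decomposition $V = V_1+V_2$ with $V_1\in\ker(dt)_g$ and $V_2\in\ker(ds)_g$, in which $dsV = dsV_1$ and $dtV = dtV_2$ automatically land in $T_xF$ and $T_yF$.

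The heart of the proof consists in the pair of descent identities
\begin{equation*}
\omega_G(V_2, V_2') = \omega_F(dtV_2, dtV_2'), \qquad \omega_G(V_1, V_1') = -\omega_F(dsV_1, dsV_1'),
\end{equation*}
for $V_i, V_i'$ in the respective kernels. I would prove the first by noting that the hamiltonian vector fields $X_{t^*f}$ lie in $\ker ds$ (since $t^*C^\infty(M)$ Poisson-commutes with $s^*C^\infty(M)$, a restatement of (iii)) and that $dtX_{t^*f} = X_f$ exhausts $T_yF$ as $f$ varies, so these span $\ker ds$ modulo $\ker ds\cap\ker dt$; the Poisson-map property of $t$ then gives $\omega_G(X_{t^*f}, X_{t^*g}) = \{t^*f, t^*g\}_G = t^*\{f,g\}_M = \omega_F(X_f, X_g)\circ t$, and linearity does the rest. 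The second identity is symmetric, with the sign tracking the fact that $s$ is anti-Poisson.

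Now the orthogonality (iii) kills the cross terms $\omega_G(V_1, V_2')$ and $\omega_G(V_2, V_1')$ in the expansion, so
\begin{equation*}
\omega_G(V, V') = \omega_G(V_1, V_1') + \omega_G(V_2, V_2') = -\omega_F(dsV, dsV') + \omega_F(dtV, dtV'),
\end{equation*}
and hence $-\omega_G(V,V') - \omega_F(dsV, dsV') + \omega_F(dtV, dtV') = 0$, which is the vanishing of the restriction to $\mathsf{a}$ of the symplectic form on $\overline{G}\times\overline{F}\times F$. Combined with the dimension count, this proves $\mathsf{a}$ is Lagrangian. The main obstacle is the descent identities, since they encode the essential content of how the symplectic structure on $G$ integrates the Poisson bracket on $M$ leafwise; once they are in hand, the rest of the argument is linear algebra and a standard dimension chase.
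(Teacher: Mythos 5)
The paper itself offers no proof of this proposition: it is stated as standard background from Poisson geometry (the classical Coste--Dazord--Weinstein-type facts about symplectic groupoids), so there is nothing in the text to compare your argument against line by line. On its own terms, your proof is correct and is essentially the standard argument. The dimension count is right ($\dim s^{-1}(F) = n + 2k$ because $s$ is a submersion and the orbit is saturated), the identification $T_g\mathsf{a} = (\ker ds)_g + (\ker dt)_g$ follows from the surjectivity of $dt|_{\ker ds}$ onto $T_{t(g)}F$ exactly as you say, and the descent identities plus the symplectic orthogonality $(\ker ds)^{\omega} = \ker dt$ do kill all the terms in the restriction of $-\omega_G \oplus -\omega_F \oplus \omega_F$. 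Two small points deserve emphasis. First, your computation only closes up with the sign convention you state in (ii) (target Poisson, source anti-Poisson); with the opposite convention for $\omega_F$ the cross-cancellation fails, so it is worth flagging that ``the symplectic structure induced by the Poisson structure of $M$'' means the one normalized so that $t$ is a Poisson map. Second, ``linearity does the rest'' in the first descent identity silently uses that vectors in $\ker ds \cap \ker dt = \ker ds \cap (\ker ds)^{\omega}$ pair trivially with all of $\ker ds$; this is again your fact (iii), but it should be invoked explicitly there rather than only for the cross terms. Neither point is a gap, just a place to be explicit.
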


\section{\textcolor{highlight}{Fukaya categories, functors, and monoidal structures}}
\label{sec:functors}

\subsection{Categories}

In this section we shall recall in brief outline how functors between Fukaya categories arise from Lagrangian correspondences. The culmination of this theory is meant to be a $2$-category (or even $(\infty,2)$-category) whose objects are symplectic manifolds, which enhances Weinstein's original proposal \cite{weinstein-category}. \textcolor{highlight}{For a fuller treatment of the construction of this $2$-category, see the articles by Wehrheim, Woodward, Ma'u and Bottman \cite{ww-quilted,ww-functoriality,ww-pseudoholomorphic-quilts,ww-composition,mww,bottman-wehrheim,bottman,bottman-witch,bottman-carmeli}, as well as the treatment by Fukaya \cite{fukaya-correspondence}.}

Given a symplectic manifold $X$, the Fukaya $A_{\infty}$-category $\cF(X)$ is a triangulated $A_\infty$-category that is generated by Lagrangian branes. A Lagrangian brane has a geometric support, which is a Lagrangian submanifold $L \subset X$. The passage from Lagrangian branes to Lagrangian submanifolds is neither one-to-one nor onto in general. Given two Lagrangian branes $K,L$ whose supports intersect transversely, their morphisms form a cochain complex,
\begin{equation}
  \shom^*(K,L) = \bigoplus_{p\in K\cap L} \K_p
\end{equation}
where $\K_p$ is a certain one-dimensional vector space attached to the intersection point $p \in K\cap L$ (its precise definition depends on the brane structures). There is a differential
\begin{equation}
  \mu^1 : \shom^*(K,L) \to \shom^*(K,L)[1]
\end{equation}
defined by counting inhomogeneous pseudo-holomorphic strips with boundary on $K$ and $L$. The next piece of structure is the composition,
\begin{equation}
  \mu^2 : \shom^*(L_1,L_2) \otimes \shom^*(L_0,L_1) \to \shom^*(L_0,L_2),
\end{equation}
defined for a triple $(L_0,L_1,L_2)$ of branes by counting inhomogeneous pseudo-holomorphic triangles. This composition is not necessarily associative at chain level, but the failure of associativity is trivialized by a homotopy operator $\mu^3$ that takes $3$ inputs, has degree $-1$, and counts pseudo-holomorphic quadrilaterals. This hierarchy continues to all orders with higher homotopies $\mu^k$ that take $k$ inputs, have degree $2-k$, and count pseudo-holomorphic $(k+1)$-gons. The system of identities that these operators satisfy are called the $A_\infty$-equations.

Once the $A_{\infty}$-category of Lagrangian branes $\cB(X)$ is set up as in the previous paragraph, the full Fukaya $A_{\infty}$-category $\cF(X)$ is constructed from this category by a formal enlargement process that adds all sums, shifts, cones, and summands of objects. One way to define it is to consider the Yoneda embedding (see below) of $\cB(X)$ into $\Mod \cB(X)$, the category of modules over $\cB(X)$, and take \textcolor{highlight}{idempotent complete triangulated envelope} of the image.

\subsection{Modules}

Given an $A_\infty$-category $\cA$, we can form its category of modules
\begin{equation}
  \Mod \cA = \Fun(\cA^{\op},\Ch_\K)
\end{equation}
The objects are $A_\infty$-functors from $\cA$ to the differential graded category of chain complexes over $\K$, and $\Mod \cA$ is an $A_\infty$-category in its own right. Rather than spelling this out, we give the paradigmatic example from which the general definition can be inferred. Given an object $L \in \Ob \cA$, we can define a module $\cY_L$, the Yoneda module of $L$, whose value on the object $K \in \Ob \cA$ is
\begin{equation}
  \cY_L(K) = \shom^*(K,L)
\end{equation}
Analogously to the case of ordinary categories, the association $L\mapsto \cY_{L}$ extends to an $A_{\infty}$-functor $\cA \to \Mod \cA$ that is a quasi-equivalence onto its image.

\subsection{Lagrangian correspondences and the 2-category $\Sympcat$}

Now suppose that $(X,\omega_X)$ and $(Y,\omega_Y)$ are two symplectic manifolds. Denote by $\overline{X}$ the symplectic manifold $(X,-\omega_X)$, and by $\overline{X} \times Y$ the Cartesian product with the symplectic form $(-\omega_X) \times \omega_Y$. A Lagrangian submanifold $C \subset \overline{X} \times Y $ is called a Lagrangian correspondence from $X$ to $Y$. The project initiated by Wehrheim and Woodward around the year 2007 associates to a correspondence equipped with a brane structure a functor \textcolor{highlight}{\cite[Theorem 1.1]{mww}}\footnote{\textcolor{highlight}{The Ma'u-Wehrheim-Woodward construction yields a functor $\cF^{\sharp}(C) : \cF^{\sharp}(X) \to \cF^{\sharp}(Y)$, where $\cF^{\sharp}$ denotes the extended Fukaya $A_{\infty}$-category. In \eqref{eq:functor-from-corr} we are pre-composing this with the inclusion $\cF(X) \to \cF^{\sharp}(X)$, and post-composing with the functor $\cF^{\sharp}(Y) \to \Mod \cF(Y)$. This perspective is also used in \cite{fukaya-correspondence,gao-correspondence}.}}
\begin{equation}
  \label{eq:functor-from-corr}
  \cF(C) : \cF(X) \to \Mod \cF(Y)
\end{equation}
At the object level, this functor can be described neatly by declaring that, for $L \in \Ob \cF(X)$, the $\cF(Y)$-module $\cF(C)(L)$ has as its value on $K \in \Ob \cF(Y)$ the complex
\begin{equation}
  \cF(C)(L)(K) = \hom^*_{\overline{X} \times Y}(L \times K ,C)
\end{equation}
which is nothing but the Floer cochain complex computed in the product $\overline{X} \times Y$. While neat, this formulation is not well-suited to understanding the higher components of the module $\cF(C)(L)$. It is more effective to switch reformulate this complex as a so-called quilted Floer cohomology cochain complex $QCF^*(K,C,L)$ that depends on the three objects $K,C,L$ and whose differential counts quilted strips with an interior seam. Then one defines the higher components of the $A_\infty$-module $\cF(C)(L)$ by counting certain quilted polygons. Then one must show that the resulting modules $\cF(C)(L)$ are functorial with respect to $L$ in the $A_\infty$ sense, and this involves another class of quilted surfaces.

There are \textcolor{highlight}{several} more layers to the story that are related to one another. The first layer is the question of whether the module $\cF(C)(L) \in \Ob \Mod \cF(Y)$ is representable, which is to say, whether this module is equivalent to one of the form $\cY_K$ for some $K \in \Ob \cF(Y)$. There is a natural candidate for the Lagrangian submanifold of $Y$ on which the representing object could be supported, namely the geometric composition
\begin{equation}
  C \circ L = \{y \in Y \mid (\exists x \in L)((x,y) \in C)\}
\end{equation}
This need not be a manifold, but when it is, one strives to prove that there is a brane structure on it such that it represents $\cF(C)(L)$. Such a result is known in the subject as a ``geometric composition theorem.'' The second layer is to understand the sense in which $\cF(C)$ is functorial with respect to $C$. Namely, given correspondences $C_1 \subset \overline{X} \times Y$ and $C_2 \subset \overline{Y} \times Z$, to understand whether the composition $\cF(C_2)\circ \cF(C_1)$ be expressed in terms of the geometric composition $C_2 \circ C_1$ of the correspondences. \textcolor{highlight}{This is often possible when the composition $C_{2}\circ C_{1}$ is transverse and embedded; a general result of this form is given by \cite[Theorem 1.2]{mww}. Another approach is due to Lekili-Lipyanskiy \cite{lekili-lipyanskiy,lekili-lipyanskiy-corrigendum}.}

\textcolor{highlight}{The third layer is to express the functoriality of the composition operation itself as a $A_{\infty}$-bifunctor
  \begin{equation}
    \label{eq:mww-composition}
    \circ : \cF(\overline{X} \times Y) \times \cF(\overline{Y} \times Z) \to \cF(\overline{X} \times Z),
  \end{equation}
  but there now may be a problem precisely because geometric composition is not always possible. Ma'u-Wehrheim-Woodward solve this by passing from the Fukaya $A_{\infty}$-category $\cF(\overline{X}\times Y)$ to its extended version $\cF^{\sharp}(X,Y)$ whose objects are chains of correspondences from $X$ to $Y$. Another approach is to enlarge $\cF(\overline{X}\times Y)$ to the $A_{\infty}$-category of $A_{\infty}$-bimodules $\bimod{\cF(X)}{\cF(Y)}$, and take \eqref{eq:mww-composition} to be the tensor product over $\cF(Y)$. The formal arguments in this paper could be made using either version. In any case we refer to \eqref{eq:mww-composition} as the \emph{Ma'u-Wehrheim-Woodward composition functor}.}
The $\infty$-categorical aspect to this construction is currently under further development by Bottman and Wehrheim \cite{bottman-wehrheim,bottman,bottman-witch,bottman-carmeli}.

The ultimate package that this line of research produces is a $2$-category $\Sympcat$ (call it the ``Weinstein-Donaldson-Fukaya-Wehrheim-Woodward category'') whose objects are symplectic manifolds, whose $1$-morphisms are Lagrangian correspondences, and whose \textcolor{highlight}{spaces of} $2$-morphisms are Floer cohomology groups. The $2$-category has additional structure that we shall make use of.
\begin{enumerate}
\item There is a monoidal structure given by the Cartesian product $(X,Y) \mapsto X \times Y$ of symplectic manifolds. The unit object is $\point$.
\item There is an involution on objects $X \mapsto \overline{X}$ that reverses the sign of the symplectic form. This extends to an involutive autoequivalence of $\Sympcat$ that is covariant with respect to $1$-morphisms and contravariant with respect to $2$-morphisms.
\item Combining the two points above, we can regard $\overline{X} \times Y$ as the internal hom object in $\Sympcat$, meaning that there is an equivalence of categories
  \begin{equation}
    \Hom_\Sympcat(\point, \overline{X} \times Y) \cong \Hom_\Sympcat(X, Y)
  \end{equation}
  This is merely a restatement of the construction of $1$-morphisms from correspondences.
\end{enumerate}

On the other hand, there is also a $2$-category of small $A_\infty$-categories $\Ainfcat$. Because any $A_\infty$-category is equivalent to a DG category, \textcolor{highlight}{we expect that $\Ainfcat$ is equivalent to the $2$-category of DG categories $\DGcat$. Such an equivalence is known to hold when $\Ainfcat$ and $\DGcat$ are considered as $(\infty,1)$-categories (see Section \ref{sec:dg-vs-ainf}).} In $\Ainfcat$, the objects are small $A_\infty$-categories, the $1$-morphisms are $A_\infty$-functors, and the $2$-morphisms are $A_\infty$-natural transformations. \textcolor{highlight}{This category has a monoidal structure given by the appropriately defined tensor product of $A_\infty$-categories (see Section \ref{sec:ainf-tensor-product}).} It also has a duality that takes a category to its opposite $\cA \mapsto \cA^{\op}$; it is not obvious but true that this can be extended to an involution on $\Ainfcat$ that is covariant with respect to $1$-morphisms and contravariant with respect to $2$-morphisms \cite[Section E.6]{drinfeld-dg}.

\textcolor{highlight}{With these concepts, and following Wehrheim-Woodward, we may conceive of the Fukaya $A_{\infty}$-category as a functor 
\begin{equation}
  \cF: \Sympcat \to \Ainfcat
\end{equation}
between $2$-categories. As Wehrheim-Woodward observed \cite{wehrheim-talks}, this is nothing but the 2-categorical Yoneda embedding for the object $\point$, since $\cF(X) = \Hom_{\Sympcat}(\point,X)$.}



\textcolor{highlight}{\begin{rmk}
  Although we have considered $\Ainfcat$ and $\DGcat$ as $2$-categories above for philosophical reasons, for our applications it suffices to consider $\Ainfcat$ and $\DGcat$ as $(\infty,1)$-categories. This is because the homotopy-coherent monoidal structures we wish to study are defined by functors that satisfy relations \emph{up to $2$-isomorphism}: non-invertible $2$-morphisms do not play a role. This is precisely the same remark that makes the inductive definition of $(\infty,n)$-categories possible \cite[p.~6]{HTT}.
\end{rmk}}

\begin{rmk}
  A different way of describing the abstract structure that governs the $2$-category $\Sympcat$, borrowed from physics, is as a \emph{two-dimensional topological field theory with boundary conditions and codimension-one defects.} Codimension-one defects, \textcolor{highlight}{called \emph{interfaces} by Gaiotto-Moore-Witten \cite{GMW1,GMW2} and also known as \emph{domain walls,}} correspond to the seams in quilted Floer theory. This is just another way of saying that the structure is governed by the degenerations of quilted surfaces. In this language, the geometric composition problem corresponds to the problem of colliding the defects with each other and with the boundary conditions.
\end{rmk}

\subsection{\textcolor{highlight}{Good solutions}}
\label{sec:good-sol}
We shall now make more precise what we actually need from $\Sympcat$ for our formal arguments.
It what follows, $\cF(X)$ denotes the idempotent complete triangulated Fukaya $A_{\infty}$-category of $X$. We refer to Section \ref{sec:ordinary-nerves} for the definition of the $S$-colored simplex category $\bDelta_{S}$, and to Section \ref{sec:inf2} for the definition of Segal categories.


\begin{defn}[Good solution for a fixed collection of symplectic manifolds]
  \label{defn:good-sol1}
  Let $S = \{M_{i}\}_{i \in I}$ be a set whose elements are symplectic manifolds. Let $\bDelta_{S}$ be the $S$-colored simplex category. We say that $S$ \emph{admits a good solution to the composition problem} if there is a Segal category enriched in $A_{\infty}$-categories $X : N(\bDelta_{S}^{\op}) \to \Ainfcat$ whose value on $1$-simplices is
  \begin{equation*}
    X([M_{i},M_{j}]) = \cF(\overline{M}_{i}\times M_{j}),
  \end{equation*}
  and such that the span
  \begin{equation*}
    X([M_{i},M_{j}]) \times X([M_{j},M_{k}])\leftarrow X([M_{i},M_{j},M_{k}]) \to X([M_{i},M_{k}])
  \end{equation*}
  (where the leftward arrow is a quasi-equivalence by the Segal condition) induces a functor
  \begin{equation*}
    \cF(\overline{M}_{i}\times M_{j}) \times \cF(\overline{M}_{j}\times M_{k}) \to \cF(\overline{M}_{i}\times M_{k})
  \end{equation*}
  that is homotopic to the Ma'u-Wehrheim-Woodward composition functor.
\end{defn}

\begin{defn}[Good solution for a fixed subcategory of the Weinstein category]
  \label{defn:good-sol2}
  Let $S = \{M_{i}\}_{i \in I}$ be a set of symplectic manifolds, and let $C$ be a set of correspondences between the elements of $S$ such that all geometric compositions of elements of $C$ are transverse and embedded, and such that $C$ is closed under geometric composition. Let $C(M_{i},M_{j})$ denote the subset of $C$ consisting of correspondences from $M_{i}$ to $M_{j}$. We say that $(S,C)$ \emph{admits a good solution to the composition problem} if each object of $C(M_{i},M_{j})$ may be made into an object of $\cF(\overline{M}_{i}\times M_{j})$ that defines a functor $\cF(M_{i}) \to \cF(M_{j})$, and there is a Segal category enriched in $A_{\infty}$-categories $X : N(\bDelta_{S}^{\op}) \to \Ainfcat$ whose value on $1$-simplices is
  \begin{equation*}
    X([M_{i},M_{j}]) = \langle C(M_{i},M_{i})\rangle \subset \cF(\overline{M}_{i}\times M_{j}),
  \end{equation*}
  where $\langle C(M_{i},M_{j}) \rangle $ denotes the full $A_{\infty}$-subcategory with objects $C(M_{i},M_{j})$, and such that the induced composition
  \begin{equation*}
    \langle C(M_{i},M_{j})\rangle \times \langle C(M_{j},M_{k})\rangle \to \langle C(M_{i},M_{k})\rangle
  \end{equation*}
  is homotopic to the Ma'u-Wehrheim-Woodward composition functor. 
\end{defn}

If a set $S$ of symplectic manifolds admits a good solution to the composition problem, then it is possible to construct an $\infty$-category $\Sympcat_{S}$ that is the fragment of $\Sympcat$ with these objects. To do this, we compose the given map $X : N(\bDelta_{S}^{\op}) \to \Ainfcat$ with the $A_{\infty}$-nerve construction $N_{A_{\infty}} : \Ainfcat \to \Cat_{\infty}$ (see Section \ref{sec:ainf-nerve}). Then we pass from $\Cat_{\infty}$ to $\infty$-groupoids by throwing away the noninvertible morphisms. This yields a Segal category enriched over $\infty$-groupoids, which is what is usually meant by the term ``Segal category.'' It is known that the category of Segal categories and the category of $\infty$-categories are Quillen equivalent \cite{joyal-tierney}, and passing through this equivalence we obtain an $\infty$-category that we denote $\Sympcat_{S}$. Note that $\Sympcat_{S}$ is an $(\infty,1)$-category.

If a subcategory $(S,C)$ with fixed correspondences admits a good solution, we may apply the same construction to obtain a fragment $\Sympcat_{(S,C)}$ whose objects are the elements of $S$ and whose $1$-morphisms are the elements of $C$.

\begin{rmk}
  While we shall not prove in this paper that any particular collection of symplectic manifolds admits a good solution in the sense of these definitions, we shall remark on why these are reasonable \emph{Ans\"{a}tze} for our formal arguments. Ongoing work of Bottman, Wehrheim, and others proposes to construct $\Sympcat$ as an $(A_{\infty},2)$-category, where the homotopy associativity of all parts of the structure are governed by the $2$-associahedra \cite{bottman-wehrheim,bottman,bottman-witch,bottman-carmeli}. Just as it is known that various different theories of $(\infty,1)$-categories are equivalent (see Section \ref{sec:infty-1-stuff} and \cite{bergner,joyal-tierney}), it is expected that all different theories of $(\infty,2)$-categories are equivalent. For several classes of definitions of $(\infty,n)$-categories, this is proved in \cite{bsp}.  The concept of a Segal category enriched in $A_{\infty}$-categories is simply the version that is most natural for the arguments in this paper.
  
  For our desired application to symplectic groupoids, it suffices to construct a fragment of $\Sympcat$ as an $\infty$-category. Given an $(A_{\infty},2)$-category, it may be possible to construct a $(\infty,1)$-categorical nerve (throwing away non-invertible $2$-morphisms) that will be an $\infty$-category, similar to the $A_{\infty}$-nerve constructed by Faonte and Tanaka.
\end{rmk}

\subsection{\textcolor{highlight}{From symplectic groupoids to monoid objects}}
\label{sec:new-monoids}

We now show how symplectic groupoids naturally give rise to monoid objects in various categories. We refer to the Appendix for background on the theory of monoid objects in higher category theory. \textcolor{secondrevision}{Let $\bDelta$ denote the simplex category whose objects are finite nonempty ordinals $[n] = \{0<1 < \cdots < n\}$, and whose morphisms are monotonic maps. Let $\bDelta_{a}$ denote the augmented simplex category, which also contains the empty set $[-1] = \emptyset$. We often use the notation $\mathbf{n} = [n-1]$ for objects of $\bDelta_{a}$. The category $\bDelta_{a}$ carries a monoidal structure $\oplus$ given on objects by ordinal addition. See the Appendix for more details.} Let $\Man$ denote the category of smooth manifolds and smooth maps. We begin with a fact about Lie groupoids.
\begin{prop}
  Let $(G,M,s,t,e,i,m)$ be a Lie groupoid. Then there is a simplicial object in the category of smooth manifolds $X : \bDelta^{\op} \to \Man$ whose value on $[n]$ is
  \begin{equation*}
    X([n]) = G \times_{M} G \times_{M} \cdots \times_{M} G,
  \end{equation*}
  the set of composable sequences of arrows of length $n$, whose face maps are given by projecting out the first factor, composition of two consecutive arrows, or projecting out the last factor, and whose degeneracy maps are given by inserting unit elements.
\end{prop}
Note that the definition of a Lie groupoid implies that the fiber product in this definition is transverse. This proposition is nothing but the nerve construction applied internally in the category $\Man$. The fact that $X$ is a simplicial object encodes the associativity and unitality of the groupoid structure, but not the existence of inverses.

Now let $\Symp$ denote the Weinstein ``category,'' that is, the partial category where objects are symplectic manifolds, morphisms are Lagrangian correspondences, and composition is only defined when it is transverse and embedded. It is possible for $\Symp$ to contain a genuine category, meaning a collection of morphisms such that all compositions are defined and the collection is closed under composition.

One might guess that a symplectic groupoid $G$ gives rise to a simplicial object $X: \bDelta^{\op} \to \Symp$, but this is false. The culprit are the first and last face maps, which are meant to be projections. For symplectic manifolds $X$ and $Y$, the projection $X \times Y \to X$ is not given by a Lagrangian correspondence.\footnote{In higher category theory, this observation is expressed by the statement ``the Cartesian product of symplectic manifolds is a non-Cartesian monoidal structure on $\Sympcat$.'' See Section \ref{sec:monoids-in-symp}.} Our strategy for overcoming this difficulty is to simply delete the offending face maps from the category $\bDelta$. This yields the category $\bDeltaint \subset \bDelta$ whose morphisms are monotonic maps that preserve the minimum and maximum elements. There is an isomorphism $\bDeltaint^{\op} \cong \bDelta_{a}$ with the augmented simplex category, so we are led to consider augmented cosimplicial objects instead of simplicial objects. \textcolor{secondrevision}{Forgetting the projections leads to a structure that is a bit too weak, and we remedy this by requiring that the cosimplicial object extends to a monoidal functor from $(\bDelta_{a},\oplus)$; see Definition \ref{defn:noncartmonoid}.}

\begin{prop}
  The assignment \textcolor{secondrevision}{$X_{G}(\mathbf{0}) = \point$, $X_{G}(\mathbf{n}) = G^{\times n}$} may be extended to an augmented cosimplicial object $X_{G} : \bDelta_{a} \to \Symp$:
  \begin{equation}
    \label{eq:cosimp-main}
    \xymatrix { \point \ar[r] & G \ar@<1ex>[r]  \ar@<-1ex>[r]  &  G\times G \ar@<0ex>[l] \ar@<0ex>[r] \ar@<2ex>[r] \ar@<-2ex>[r] &G \times G \times G \ar@<1ex>[l] \ar@<-1ex>[l] } \cdots
  \end{equation}
  where the left-to-right correspondences insert $\mathsf{e} : \point \to G$, and the right-to-left correspondences come from applying $\mathsf{m} : G \times G \to G$ to two consecutive factors. \textcolor{secondrevision}{Furthermore, $X_{G}$ extends to a monoidal functor $(\bDelta_{a},\oplus) \to (\Symp,\times)$. In other words, $X_{G}$ is a monoid object in $(\Symp,\times)$.}
\end{prop}
\begin{proof}
  The cosimplicial identities are direct consequences of the associativity and unitality of the groupoid. The transversality of the compositions is guaranteed by the fact that $s, t: G \to M$ are submersions. \textcolor{secondrevision}{The fact that $X_{G}$ extends to a monoidal functor reflects the fact that all of the coface and codegeneracy maps are generated by the unit $\mathsf{e} : X_{G}(\mathbf{0}) \to X_{G}(\mathbf{1})$ and the composition $\mathsf{m} : X_{G}(\mathbf{2}) \to X_{G}(\mathbf{1})$. The natural isomorphism $J_{\mathbf{n},\mathbf{m}}: X_{G}(\mathbf{n})\times X_{G}(\mathbf{m}) \to X_{G}(\mathbf{n} \oplus \mathbf{m})$ is just the associativity constraint for the Cartesian product of manifolds.}
\end{proof}

In the Appendix, we present the definition of a monoidal $\infty$-category and a $\otimes$-monoid object in a monoidal $\infty$-category $(\calC,\otimes)$, which we take to be either $(\Sympcat,\times)$ or $(\Ainfcat,\otimes)$. This discussion culminates in Definitions \ref{defn:monoid-in-symp} and \ref{defn:monoid-in-ainfcat}, which may be read now. This definition is not entirely standard, but it is an $\infty$-categorical variation of the definition of a homotopy monoid introduced by Leinster \cite{leinster}; the Appendix derives it from standard notions in higher category theory.

The definitions may be summarized as saying that the diagram \eqref{eq:cosimp-main} in $\Symp$ may be lifted to a homotopy coherent diagram in $\Sympcat$ or $\Ainfcat$. The structure is encoded by an augmented cosimplicial object $X : N(\bDelta_{a}) \to \calC$ from the \emph{nerve} of $\bDelta_{a}$ to $\calC$, \textcolor{secondrevision}{together with the data of an extension to a monoidal functor from $(N(\bDelta_{a}),\oplus)$}. This is a way of encoding homotopy associativity: the cosimplicial identities that encode associativity and unitality are not required to hold strictly, but the higher-dimensional cells in $N(\bDelta_{a})$ index a family of coherent homotopies.

\begin{thm}
  \label{thm:monoid-main}
  Let $G$ be a symplectic groupoid. Suppose that the set $S = \{G^{\times n} \mid n \ge 0\}$ consisting of all Cartesian powers of $G$ admits a good solution to the composition problem. Then $G$ is a $\times$-monoid object in $\Sympcat_{S}$.

  Suppose either the previous hypothesis, or merely that the subcategory $(S,C)$ of $\Symp$ obtained as the image of the map $X_{G}: \bDelta_{a} \to \Symp$ admits a good solution to the composition problem. Also assume that each correspondence $G^{\times n} \to G^{\times m}$ defines a functor $\cF(G)^{\otimes n} \to \cF(G)^{\otimes m}$. Then $\cF(G)$ is an $\otimes$-monoid object in $\Ainfcat$. 
\end{thm}

\begin{proof}
  Suppose that $S$ admits a good solution. This implies that $\Sympcat_{S}$ exists as an $\infty$-category. Consider the ordinary category $\bDelta_{a}$ as a subcomplex of its nerve $N(\bDelta_{a})$. The assumption that our correspondences define functors says that the augmented cosimplicial object $X_{G} : \bDelta_{a} \to \Symp$ may be lifted to a map $X^{1}: \bDelta_{a} \to \Sympcat_{S}$. Because our correspondences are geometrically composable, and this composition is associative up to coherent homotopy (by the axioms of the Segal category from which $\Sympcat_{S}$ was constructed), this map may be extended over the higher cells in $N(\bDelta_{a})$ to obtain a map $X: N(\bDelta_{a}) \to \Sympcat_{S}$. \textcolor{secondrevision}{Since the original functor $X_{G}: (\bDelta_{a},\oplus) \to (\Symp,\times)$ is monoidal, we can choose these extensions over the higher cells so that $X_{G}$ extends to a morphism of monoidal $\infty$-categories $(N(\bDelta_{a}),\oplus) \to (\Sympcat_{S},\times)$, which is to say a $\times$-monoid object in $\Sympcat_{S}$.}

  In the case where we merely assume that $(S,C)$ admits a good solution, we use the fragment $\Sympcat_{(S,C)}$ that has a restricted class of $1$-morphisms. Because we have assumed that all elements of $C$ define functors, we have a map of $\infty$-categories $\Sympcat_{(S,C)} \to \Ainfcat$ that takes $G^{\times n}$ to $\cF(G)^{\otimes n}$. As before, we may lift the map $X_{G} : \bDelta_{a} \to \Symp$ to a map $X_{G}^{1} : \bDelta_{a} \to \Ainfcat$, and use the homotopy coherent solution to the composition problem to extend \textcolor{secondrevision}{$X_{G}$ to a monoidal functor $(N(\bDelta_{a}),\oplus) \to (\Ainfcat,\otimes)$.}
\end{proof}

\begin{rmk}
  The condition that each correspondence defines a functor $\cF(G)^{\otimes n} \to \cF(G)^{\otimes m}$ may be simplified. Every morphism $f : [n] \to [m]$ in $\bDelta_{a}$ is described by choosing a $k$-element subset of $[m]$ that is the image, and a partition of the domain $[n]$ into $k$ intervals on which $f$ is constant. The corresponding Lagrangian correspondence $G^{\times (n+1)} \to G^{\times (m+1)}$ is constructed by inserting $\mathsf{e}: \point \to G$ for each element not in the image of $f$, and inserting the $\ell$-fold composition correspondence $G^{\times \ell}\to G$ for each interval of length $\ell$ (when $\ell = 1$ this is the diagonal, and when $\ell = 2$, it is $\mathsf{m}$). Thus every correspondence in the diagram is obtained as the $\times$-product of the identity and $\ell$-fold composition correspondences, and all that is needed is that the $\ell$-fold composition defines a functor $\cF(G)^{\otimes \ell} \to \cF(G)$.
\end{rmk}

The moral of this story is that the problem of constructing homotopy coherent monoid objects is a subproblem of the general problem of homotopy coherent composition for Lagrangian correspondences.

Now we turn to the corresponding results for module objects. The proofs are formally the same as what has been done above. For monoid-module pairs, the relevant indexing \textcolor{secondrevision}{object is the pair $(\bDelta_{a},\bDelta_{a}^{+})$ consisting of the monoidal category $\bDelta_{a}$ and its module category $\bDelta_{a}^{+}$. The category $\bDelta_{a}^{+}$ contains} $\bDelta_{a}$ but has one more face map at each level to encode the action on the module. \textcolor{secondrevision}{See Definition \ref{defn:noncartmonoidmodulepair} for the concept of $\otimes$-monoid-module pair in a monoidal $\infty$-category $(\calC,\otimes)$.}

\begin{thm}
  Let $G$ be a symplectic groupoid and let $F$ be a symplectic leaf of the underlying Poisson structure. There is a diagram $X_{(G,F)} : \textcolor{secondrevision}{\bDelta_{a}^{+}} \to \Symp$ of the form
  \begin{equation}
    \xymatrix { F \ar[r]  & G \times F \ar@<1ex>[r] \ar@<-1ex>[l] \ar@<-1ex>[r]  &  G \times G \times F \ar@<-2ex>[l] \ar@<0ex>[l] }\cdots
  \end{equation}
  whose restriction to $\bDelta_{a} \subset \textcolor{secondrevision}{\bDelta_{a}^{+}}$ is obtained by taking $X_{G}$ times the fixed manifold $F$, and such that the additional face maps are given by the action correspondence $\mathsf{a} : G\times F \to F$.

  If the set $S = \{G^{\times n}, G^{\times n} \times F \mid n \geq 0\}$ admits a good solution of the composition problem, then this diagram may be lifted to a map $X : N(\bDeltaint^{+})^{\op} \to \Sympcat$ making $(G,F)$ into a $\times$-monoid-module pair in $\Sympcat$.

  If at least the subcategory of $\Symp$ consisting of the correspondences appearing in this diagram admits a good solution, and each correspondence defines a functor $\cF(G)^{\otimes n} \otimes \cF(F) \to \cF(G)^{\otimes m}\otimes \cF(F)$, then $(\cF(G),\cF(F))$ has the structure of a $\otimes$-monoid-module pair in $\Ainfcat$.
\end{thm} 

\begin{rmk}
  One may also formulate the notion of monoid objects for the \emph{Cartesian} monoidal structures on $\Sympcat$ and $\Ainfcat$. The Cartesian monoidal structure on $\Ainfcat$ is the product of categories. The Cartesian monoidal structure on $\Sympcat$ is the disjoint union of symplectic manifolds: because $\overline{X} \times (Y \coprod Z) = (\overline{X} \times Y) \coprod (\overline{X} \times Z)$, a Lagrangian correspondence $X \to Y \coprod Z$ is the same thing as a pair of Lagrangian correspondences $X \to Y$ and $X \to Z$ (provided we allow $\emptyset$ as a Lagrangian submanifold). This would allow us to use the simpler definition of monoid objects in an $\infty$-category (Definition \ref{defn:cartmonoid-oo}). The homotopy coherence problem is not really any easier if we do this, and interpreting the correspondence $\mathsf{m} \subset \overline{G} \times \overline{G} \times G$ in this formalism is challenging. This is why we prefer to formulate things a way that is formally more sophisticated but geometrically more natural.
\end{rmk}

\begin{rmk}
  \label{rmk:size}
  \textcolor{highlight}{There are situations where the idempotent complete triangulated Fukaya $A_{\infty}$-category $\cF(G)$ is not large enough for the monoidal structure to be defined,} but it may nevertheless happen that $\Mod \cF(G)$ admits a monoidal structure even though $\cF(G)$ does not. An example of this phenomenon is furnished by the wrapped Fukaya $A_{\infty}$-category of a cotangent bundle $T^{*}M$ regarded as a symplectic groupoid over $M$ (see section \ref{sec:cotangent} below). 
\end{rmk}

\begin{rmk}
  It seems that the first person to observe the expected existence of these structures was M.~Gualtieri \cite{gualtieri-talk} around the year 2009 when the quilt theory first came into general use in symplectic topology.
\end{rmk}

\subsection{\textcolor{highlight}{Monoid objects with duality}}
\textcolor{highlight}{So far we have not used the inversion correspondence $\mathsf{i} \subset G \times G$. We will now study how this correspondence equips the monoid objects $G$ and $\cF(G)$ with a duality involution. In this section, we will suppress questions of homotopy coherence, although it should be possible to extend the notion of homotopy-coherent monoid objects presented in the Appendix to include duality by enlarging the indexing category $\bDelta_{a}$. The reader may therefore assume that in this section we are dealing with cohomology-level Fukaya categories (what Wehrheim-Woodward refer to as the Donaldson category).}


We shall work in a $2$-category $\cC$ which has a symmetric monoidal product $\times$ and unit object $\point$. In the applications $\cC$ is either $\Sympcat$ or $\Ainfcat$.  To treat the $2$-morphisms properly, we assume that $\cC$ is not merely symmetric monoidal, but is also equipped with an involution $X \mapsto \overline{X}$ that is covariant with respect to $1$-morphisms and contravariant with respect to $2$-morphisms. We call such an involution a \textcolor{secondrevision}{$\co$-involution}\footnote{\textcolor{secondrevision}{Recall that a $2$-category has three different duals: $\cC^{\co}$ where the $2$-morphisms reversed, $\cC^{\op}$ where the $1$-morphisms are reversed, and $\cC^{\coop}$ where both $1$- and $2$-morphisms are reversed. A $\co$-involution is so called because it is a $2$-functor $\cC \to \cC^{\co}$.}}, and we assume it is compatible with the symmetric monoidal structure in the natural sense. We also assume that $\overline{X} \times Y$ is an internal Hom object, meaning that $\Hom_\cC(\point,\overline{X} \times Y) \cong \Hom_\cC(X,Y)$. There is then a canonical $1$-morphism $\Delta_{X} \in \Hom_{\cC}(\point, \overline{X}\times X)$ corresponding to the identity $1$-morphism of $X$. We further assume that given two $1$-morphisms $E,F \in \Hom_{\cC}(\point, \overline{X}\times X) \cong \Hom_{\cC}(X,X)$, we have isomorphisms
\begin{equation}
  \Homtwo(E,F) \cong \Homtwo(\Delta_{X},\overline{E}\times F) \cong \Hom(E\times \overline{F},\Delta_{\overline{X}}).
\end{equation}
These assumptions are all reasonable when thinking of $1$-morphisms as correspondences.

\begin{defn}
  Let $(\cC,\times, \point, \overline{(\cdot)})$ be a symmetric monoidal $2$-category with \textcolor{secondrevision}{$\co$-involution} as above. A \emph{monoid-with-duality object} in $\cC$ is an object $G$, together with $1$-morphisms $m : G \times G \to G$ and $e : \point \to G$, and a $1$-isomorphism $i : G \to \overline{G}$, together with a collection of $2$-isomorphisms
    \begin{equation}
    \label{eq:grouplike2cat}
    \begin{aligned}
      m \circ (1_G \times m) & \cong m \circ (m \times 1_G), \\
      m \circ (1_G \times e) & \cong 1_G,\\
      m \circ (e \times 1_G) & \cong 1_G,\\
      i \circ m &\cong \overline{m} \circ (i\times i) \circ \tau_G,\\
      m \circ (\overline{i} \times 1_{G}) \circ \Delta_G &=  e,\\
      m \circ (1_{G}\times \overline{i}) \circ \Delta_{\overline{G}} &= e,
    \end{aligned}
  \end{equation}
  where $\tau_G : G \times G \to G \times G$ is the map that swaps the factors, and $\Delta_{G} \in \Hom_{\cC}(\point,\overline{G}\times G)$ corresponds to the identity $1$-morphism $1_{G} \in \Hom_{\cC}(G,G)$.
\end{defn}

\begin{rmk}
  \label{rmk:hopf}
  This notion differs from the proper notion of group object in a monoidal category, which is the notion of a \emph{Hopf algebra}. A Hopf algebra is also equipped with a comultiplication $\Delta : G \to G\times G$ and a counit $\epsilon : G \to \point$ that satisfy several compatibility relations with $m$, $e$, and $i$. In this context $i$ is usually called the antipode. This structure does not exist on a general symplectic groupoid; see section \ref{sec:drinfeld-double} for further discussion.
\end{rmk}

\begin{prop}
\label{prop:monoid-to-monoidal}
  Suppose that $(G,m,e,i)$ is a monoid-with-duality object in a $2$-category $(\cC,\times,\point,\overline{(\cdot)})$ as above. Let $\cG = \Hom_\cC(\point,G)$ be the category of $1$-morphisms from the unit object to $G$. Then $m$ and $e$ induce monoidal structures $(\otimes, \cO)$ on $\cG$ and $\cG^\op$, and $i$ induces an equivalence $\cD : \cG \to \cG^{\op}$ such $\cD(E\otimes F) \cong \cD F \otimes \cD E$ for any objects $E$ and $F$ of $\cG$. For each object $E$ of $\cG$ there is an evaluation $2$-morphism $E\otimes \cD\overline{E} \to \cO$ and a coevaluation $2$-morphism $\cO \to E \otimes \cD\overline{E}$.
\end{prop}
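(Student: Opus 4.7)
The plan is to transport the 2-categorical $\ast$-monoid structure on $G$ down to a 1-categorical monoidal structure on $\cG$ by applying the representable $\Hom_\cC(\point,-)$. Concretely, for $E, F \in \cG$ I define
$$E \otimes F := m \circ (E \times F),$$
where $E \times F : \point \cong \point \times \point \to G \times G$ is assembled using the symmetric monoidal structure of $\cC$. The unit object is $\cO := e : \point \to G$. The associator and unitors are obtained by whiskering the 2-isomorphisms of (\ref{eq:grouplike2cat}). For instance, for $E,F,H \in \cG$ the associator reads
\begin{align*}
(E \otimes F) \otimes H &= m \circ ((m \circ (E \times F)) \times H) \cong m \circ (m \times 1_G) \circ (E \times F \times H) \\
&\cong m \circ (1_G \times m) \circ (E \times F \times H) \cong E \otimes (F \otimes H),
\end{align*}
with the middle isomorphism coming from the associator 2-iso of the $\ast$-monoid structure. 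Unitors arise similarly from the two unit 2-isomorphisms of (\ref{eq:grouplike2cat}). The pentagon and triangle identities for $(\cG, \otimes, \cO)$ then reduce to the assumed coherence of those 2-isomorphisms. The same formulas give a monoidal structure on $\cG^{\op}$, since $\cG$ and $\cG^{\op}$ share objects and the associators and unitors can be inverted.

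For the duality I would set $\cD(E) := i \circ E \in \Hom_\cC(\point, \overline{G})$ and then use the $\coop$-involution to identify $\Hom_\cC(\point, \overline{G})$ with $\cG^{\op}$, landing $\cD$ in $\cG^{\op}$. Because $i$ is a 1-isomorphism, $\cD$ is an equivalence. The anti-multiplicativity $\cD(E \otimes F) \cong \cD F \otimes \cD E$ follows directly from the fourth axiom in (\ref{eq:grouplike2cat}):
$$\cD(E \otimes F) = i \circ m \circ (E \times F) \cong \overline{m} \circ (i \times i) \circ \tau_G \circ (E \times F) \cong \overline{m} \circ ((iF) \times (iE)),$$
and the final expression is $\cD F \otimes \cD E$ once one identifies the induced monoidal product on $\cG^{\op} \cong \Hom_\cC(\point, \overline{G})$ with the one given by the inherited $\ast$-monoid structure $(\overline{m}, \overline{e}, \overline{i})$ on $\overline{G}$.

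The principal obstacle throughout is coherence bookkeeping: one must verify that the whiskered 2-isomorphisms really do satisfy pentagon and triangle, and that the anti-homomorphism 2-iso for $\cD$ is itself coherent with respect to associators on both sides. Because we have assumed that the 2-isomorphisms in the $\ast$-monoid object are coherent, all of this reduces to finite diagram chases; I expect no conceptual difficulty, only the standard tedium of verifying 2-categorical coherence. I would organize the verification by working inside the hom-category $\Hom_\cC(\point, G^{\times k})$ for small $k$ and tracking how the various operations $m, e, i$ and their structural 2-isomorphisms act on the objects obtained from $k$-tuples in $\cG$.
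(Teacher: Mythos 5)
Your proposal is correct and follows essentially the same route as the paper's proof: define $E\otimes F$ by post-composing $E\times F:\point\to G\times G$ with $m$, take $\cO=e$, use the $\coop$-involution to identify $\Hom_\cC(\point,\overline{G})$ with $\cG^\op$ so that $\cD(E)=i\circ E$ lands there, and deduce $\cD(E\otimes F)\cong \cD F\otimes \cD E$ from the fourth $\ast$-monoid axiom. The paper's proof is simply a terser version of the same argument, likewise deferring the coherence bookkeeping to the assumed coherence of the $2$-isomorphisms.
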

\begin{proof}
  First observe that because $X \mapsto \overline{X}$ is a \textcolor{secondrevision}{$\co$-involution}, it induces an equivalence of categories
  \begin{equation}
    \Hom_\cC(\point,\overline{G}) \cong \Hom_\cC(\point,G)^\op = \cG^\op
  \end{equation}
  The operation $\otimes$ is defined by the composition
  \begin{equation}
    \Hom_\cC(\point,G)\times \Hom_\cC(\point,G) \to \Hom_\cC(\point, G\times G) \to \Hom_\cC(\point,G)
  \end{equation}
  where the second arrow is post-composition with $m$. The unit object $\cO$ is $e$ regarded as an object of $\Hom_\cC(\point,G)$. The first three axioms in \eqref{eq:grouplike2cat} imply that this is a monoidal structure. By taking $\overline{m}$ and $\overline{e}$, we also see that $\overline{G}$ is a monoid object, so $\cG^{\op}$ also has a monoidal structure, which we denote by the same symbols ($\otimes$,$\cO$).

  The statement that $\cD(E\otimes F) \cong \cD F \otimes \cD E$ follows from the fourth axiom in \eqref{eq:grouplike2cat}.

  The evaluation $2$-morphisms are obtained as follows. Given $E \in \cG$, let $\overline{E} \in \cG^{\op}$ denote the same object regarded as belonging to the opposite category. Apply the ambient symmetric monoidal product $\times$ to obtain a $1$-morphism $E \otimes \overline{E} : \point = \point\times \point \to G \times \overline{G}$. On the other hand, we have the diagonal $1$-morphism $\Delta_{\overline{G}}: \point \to G\times \overline{G}$. Post-composing these $1$-morphisms with $m \circ (1_{G} \times \overline{i})$, we obtain
  \begin{equation*}
    \begin{aligned}
      m \circ (1_{G} \times \overline{i}) &\circ \Delta_{\overline{G}} = e = \cO,\\
      m \circ (1_{G} \times \overline{i}) &\circ (E \times \overline{E}) = E\otimes \cD\overline{E}
    \end{aligned}
  \end{equation*}
  Because composition with a $1$-morphism is functorial with respect to $2$-morphisms, we obtain a map on $2$-morphism spaces:
  \begin{equation}
    \label{eq:two-hom-map}
    m \circ (1_{G} \times \overline{i}) : \Homtwo(E \times \overline{E},\Delta_{\overline{G}}) \to \Homtwo(E \otimes \cD\overline{E}, \cO). 
  \end{equation}
  The domain of this map may be identified with $\Hom_{\cG}(E,E)$, and taking the image of $1_{E} \in \Hom_{\cG}(E,E)$ under \eqref{eq:two-hom-map} yields a distinguished $2$-morphism $E\otimes \cD\overline{E} \to \cO$, which is our evaluation map. The coevaluation map is constructed by considering $2$-morphisms from $\Delta_{\overline{G}}$ to $E \times \overline{E}$ instead.\end{proof}

\begin{rmk}
In order for $\cD \overline{E}$ to really be the dual of $E$, these evaluation and coevaluation maps should satisfy compatibility relations, essentially saying that there are isomorphisms $\Homtwo(E,E) \cong \Homtwo(\cO, E\otimes \cD\overline{E})$. When this holds, $\cG$ then carries the structure of a \emph{rigid monoidal category}. Since we shall not use this property, we shall defer this question.   
\end{rmk}


In light of the foregoing discussion, we now have the following enhancement of Theorem \ref{thm:monoid-main}:
\begin{prop}
  \label{prop:groupoid-to-monoidal}
  Let $(G,M,s,t,e,i,m,\omega)$ be a symplectic groupoid. Then $G$ is a monoid-with-duality object in the Weinstein category $\Symp$ of symplectic manifolds and Lagrangian correspondences. Assuming a good solution the composition problem for $S = \{G^{\times n}\mid n \geq 0\}$, $G$ can be made into a monoid-with-duality object in the $2$-category $\Sympcat$, and hence the cohomology-level Fukaya category $H^{0}(\cF(G))$ admits a monoidal structure $(\otimes, \cO)$ and $\op$-equivalence $\cD : H^{0}(\cF(G)) \to H^{0}(\cF(G))^{\op}$ as in Proposition \ref{prop:monoid-to-monoidal}.
\end{prop}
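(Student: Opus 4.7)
The plan is to take the structure $1$-morphisms $m$, $e$, $i$ of the proposed $\ast$-monoid to be the Lagrangian correspondences $\mathsf{m}$, $\mathsf{e}$, $\mathsf{i}$ of Proposition \ref{prop:everything-lagrangian}, and then to verify the $\ast$-monoid axioms first at the strict $1$-categorical level and then upgrade to $\Sympcat$. Note that $\mathsf{i}$ is inherently a $1$-morphism $G \to \overline{G}$ because inversion reverses the sign of $\omega$, which matches the target of $i$ required in the $2$-categorical definition.

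First I would work in the strict $1$-category of symplectic manifolds with Lagrangian correspondences, where composition of $1$-morphisms is set-theoretic geometric composition via fiber product over the common copy of $G$. Each equation in \eqref{eq:grouplike} then unravels to a strict equality of subsets of a power of $G$. For example, both $\mathsf{m}\circ (1_G \times \mathsf{m})$ and $\mathsf{m}\circ (\mathsf{m}\times 1_G)$ compute, on composable triples $(a,b,c)\in G^{\times 3}$, to the graph of the iterated product $(a,b,c)\mapsto m(a,b,c)$, and the two resulting subsets coincide because $m(a,m(b,c)) = m(m(a,b),c)$ in the groupoid. The unit axioms similarly reduce to the groupoid identities $m(e(t(g)),g) = g = m(g,e(s(g)))$, and the $\ast$-axiom encodes $i(m(g,h)) = m(i(h),i(g))$; the transposition $\tau_G$ and the sign reversal $\overline{(\cdot)}$ exactly account for the reversal of order of factors and of symplectic sign implicit in inversion. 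In every case the fiber-product conditions governing composability of correspondences match the source/target compatibilities built into the groupoid axioms, so the equalities hold on the nose.

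To upgrade to $\Sympcat$, I would invoke the hypothesis that the geometric composition problem has a good solution for $G$. This provides, for each of the compositions appearing above, a canonical identification in $\Sympcat$ between the categorical composition of the $1$-morphisms $\mathsf{m}$, $1_G$, $\mathsf{e}$, $\mathsf{i}$ (a priori only an $A_\infty$-module) and the embedded Lagrangian carrying the geometric composition, equipped with compatible brane data. The $2$-isomorphisms required in \eqref{eq:grouplike2cat} are then the tautological comparisons between two presentations of the same underlying Lagrangian (e.g.\ the graph of the iterated multiplication), assembled from the canonical identifications just mentioned. The coherence data — associator pentagon, unit triangles, and compatibility of the antipode-like $2$-isomorphism with the associator — reduces to the coherence of the geometric composition theorem applied to successive fiber products of copies of $G$: since the underlying subsets agree strictly by the groupoid axioms, each coherence diagram commutes by the uniqueness of the canonical comparison $2$-isomorphisms.

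The hard part is the geometric composition step rather than the diagram chase. One must check that for the graphs $\mathsf{m}$, $\mathsf{e}$, $\mathsf{i}$ and their iterated products with identity correspondences the hypotheses of some available geometric composition theorem (Wehrheim--Woodward, Lekili--Lipyanskiy, or Bottman's $\infty$-categorical refinement) are satisfied, and that the resulting brane structures on $G\times_{s,t}G$, on triple fiber products, etc., assemble coherently; this is precisely what is abstracted into the phrase ``a good solution'' in the hypothesis. Granting that, the final conclusion — that $\cF(G) = \Hom_\Sympcat(\point,G)$ inherits a monoidal structure $(\otimes, \cO)$ together with an $\op$-equivalence $\cD : \cF(G) \to \cF(G)^{\op}$ — is an immediate application of Proposition \ref{prop:monoid-to-monoidal} to the $\ast$-monoid object $(G, \mathsf{m}, \mathsf{e}, \mathsf{i})$ just constructed.
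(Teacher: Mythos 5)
Your proposal is correct and follows exactly the route the paper intends: the paper gives no written proof beyond asserting that the statement is an ``enhancement'' of Proposition \ref{prop:everything-lagrangian} combined with the geometric-composition hypothesis and Proposition \ref{prop:monoid-to-monoidal}, and your set-theoretic verification of the $\ast$-monoid axioms for $\mathsf{m}$, $\mathsf{e}$, $\mathsf{i}$, followed by the upgrade to $\Sympcat$ via the assumed geometric composition theorem, supplies precisely the details the paper leaves implicit. Nothing in your argument diverges from the paper's approach.
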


\begin{proof}
  Given a symplectic groupoid $G$, the axioms of a monoid-with-duality are precisely the associativity of $m$, unitality of $e$, and the statement that $i$ acts like an inverse; these hold because $G$ is a groupoid. This proves the first assertion. The second assertion follows because the good solution hypothesis means that these identities, formulated as geometric compositions of correspondences, can be lifted to isomorphisms of functors between Fukaya categories, at least at the cohomology level.
\end{proof}

\subsection{Drinfeld doubles and Hopf algebra objects}
\label{sec:drinfeld-double}

As mentioned in remark \ref{rmk:hopf} above,
the ``true'' notion of a group-like object in a category is that of a Hopf algebra.
In the context of the 2-category $\Sympcat$, this means that in addition to the composition $\mathsf{m} : G \times G \to G$, the unit $\mathsf{e} : \point \to G$, and the inversion $\mathsf{i}: G \to \overline{G}$, there is also a comultiplication $\delta : G \to G\times G$ and counit $\epsilon : G \to \point$. In this context the inversion is called the antipode. The comultiplication and counit must satisfy the duals of the associativity and unitality axioms. There is a compatibility between the multiplication and comultiplication,
\begin{equation}
  \delta \circ m = (m\times m)\circ (1_{G} \times \tau_{G} \times 1_{G})\circ (\delta \times \delta).
\end{equation}
For a general symplectic groupoid, there does not seem to be any way to construct $\delta$ satisfying this axiom.

However, it is possible to construct $\delta$ for the case $G = T^{*}K$ is the cotangent bundle of a compact Lie group $K$. As we shall see in the next section, this symplectic manifold is a symplectic groupoid in \emph{two ways}, once as the cotangent bundle of a manifold, and second as the symplectic integration of the canonical Poisson structure on the dual of the Lie algebra $\mathfrak{k}^{*}$. We can use one symplectic groupoid structure to define the multiplication, and the transpose of the other to define the comultiplication. Since there are two choices for which groupoid structure corresponds to multiplication, we get two dual Hopf algebra objects in $\Sympcat$.

There is actually a natural source of symplectic manifolds carrying two compatible groupoid structures as $G = T^{*}K$ does. Namely, one takes a Poisson-Lie group $(H,\pi)$, where $H$ is a Lie group and $\pi$ is a Poisson structure on the underlying manifold of $H$ such that the group operation $H \times H \to H$ is a Poisson map. A symplectic integration $(G,\omega)$ of $(H,\pi)$ (which will necessarily have twice the dimension of $H$) is called a \emph{Drinfeld double} of $(H,\pi)$. There is a dual Poisson-Lie group $(H^{\vee},\pi^{\vee})$ and $(G,\omega)$ is also a symplectic integration of $(H^{\vee}, \pi^{\vee})$ (briefly, the Lie algebra $\mathfrak{h}$ of a Poisson-Lie group is a Lie bialgebra, and there is an involution on Lie bialgebras that takes $\mathfrak{h}$ to $\mathfrak{h}^{*}$). Thus $(G,\omega)$ carries two symplectic groupoid structures, and we expect the Fukaya category $\cF(G,\omega)$ to be a Hopf algebra in the 2-category $\Ainfcat$. 

\section{Examples of monoidal Fukaya categories}
\label{sec:examples}

In this section we run through several examples of symplectic groupoids and their associated monoidal Fukaya categories. We organize the examples according to the underlying Poisson structure.

\subsection{The case of $\pi$ nondegenerate}
\label{sec:case-symp}

Let $(M,\pi)$ be a nondegenerate Poisson manifold. Then taking $\omega = \pi^{-1}$, we get a symplectic manifold $(M,\omega)$. The most obvious symplectic integration of this Poisson manifold is $G = \overline{M} \times M$. This is known as the \emph{pairs groupoid}. 

Now consider the Fukaya $A_{\infty}$-category $\cF(\overline{M}\times M)$. This is precisely the category of Lagrangian correspondences $M \to M$. Such correspondences induce $A_{\infty}$-endofunctors of $\cF(M)$, and thus there is a functor
\begin{equation}
  \cF(\overline{M}\times M) \to \Fun(\cF(M),\cF(M)).
\end{equation}
Alternatively, \textcolor{highlight}{objects in $\cF(\overline{M}\times M)$ give rise to $A_{\infty}$-bimodules over $\cF(M)$ (modules with a left and a right action by $\cF(M)$), yielding a $A_{\infty}$-functor
\begin{equation}
  \cF(\overline{M}\times M) \to \bimod{\cF(M)}{\cF(M)}
\end{equation}
From a bimodule, one recovers an endofunctor by tensoring with the bimodule. }

\textcolor{highlight}{On $\cF(\overline{M} \times M)$:
\begin{itemize}
\item The monoidal product $\otimes$ is composition of correspondences in $\cF(\overline{M}\times M)$.
\item The unit object $\cO$ is the diagonal in $\overline{M}\times M$.
\item The duality $\cD$ is transpose of correspondences in $\overline{M}\times M$.
\end{itemize}
On $\Fun(\cF(M),\cF(M))$:
\begin{itemize}
\item The monoidal product $\otimes$ is composition of functors in $\Fun(\cF(M),\cF(M))$.
\item The unit object $\cO$ is the identity functor in $\Fun(\cF(M),\cF(M))$.
\item The duality $\cD$ is an equivalence $\Fun(\cF(M),\cF(M)) \to \Fun(\cF(M)^\op,\cF(M)^\op)$. Note that the latter category is indeed naturally regarded as the opposite of the former \cite[Section E.6]{drinfeld-dg}.
\end{itemize}
On $\bimod{\cF(M)}{\cF(M)}$:
\begin{itemize}
\item The monoidal product $\otimes$ is the tensor product of bimodules over $\cF(M)$.
\item The unit object $\cO$ is the bimodule $\cF(M)$ with its standard left and right actions.
\item The duality $\cD$ is an equivalence $\bimod{\cF(M)}{\cF(M)} \to \bimod{\cF(M)^\op}{\cF(M)^\op}$.
\end{itemize}}

To go some way toward justifying that these monoidal structures are indeed the same, we have the following proposition whose proof is immediate.
\begin{prop} Let $G = \overline{M} \times M$, regarded as a symplectic groupoid, and let $\mathsf{m}$, $\mathsf{e}$, and $\mathsf{i}$ be as in Proposition \ref{prop:everything-lagrangian}.
  \begin{itemize}
  \item The composition bifunctor $\cF(G) \times \cF(G) \to \cF(G)$ is represented by a Lagrangian brane in $\overline{G} \times \overline{G} \times G$ whose underlying Lagrangian submanifold is $\mathsf{m}$.
  \item The identity functor $\cF(M) \to \cF(M)$ is represented by a Lagrangian brane in $G$ whose underlying Lagrangian submanifold is $\mathsf{e}$.
  \item The transpose functor $\cF(\overline{G}) \to \cF(G)$ is represented by a Lagrangian brane in $G \times G$ whose underlying Lagrangian submanifold is $\mathsf{i}$.
  \end{itemize}
\end{prop}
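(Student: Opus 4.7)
The plan is to verify each of the three bullets by computing the relevant Lagrangian explicitly in the pair-groupoid case and comparing it to the standard correspondence that represents composition, identity, or transpose of correspondences. Throughout I will write elements of $G = \overline{M} \times M$ as pairs $(a,b)$, and I will use the groupoid structure in which $s(a,b) = a$, $t(a,b) = b$, so that $(a,b)$ is a morphism $a \to b$ in the pair groupoid. Under these conventions $\mathsf{e} = \{(a,a) : a \in M\}$ is the diagonal of $\overline{M}\times M$, $\mathsf{i} = \{((a,b),(b,a)) : a,b \in M\}$ is the graph of the swap $\sigma : G \to G$, and $\mathsf{m} = \{((b_2,b_1),(a_2,b_2),(a_2,b_1)) : a_2,b_2,b_1 \in M\}$ is the graph of groupoid multiplication inside $\overline{G}\times\overline{G}\times G$.

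Next, recall that in the Wehrheim-Woodward-Ma'u formalism a Lagrangian correspondence $C \subset \overline{X}\times Y$ determines a functor $\cF(X) \to \Mod \cF(Y)$ whose value on a brane $L$ is, at the object level and whenever the composition is transverse and embedded, represented by the geometric composition $C \circ L$; the analogous statement for bifunctors uses a correspondence in $\overline{X_1}\times \overline{X_2}\times Y$. For the identity case, the identity functor $\cF(M)\to\cF(M)$ is represented by the diagonal of $\overline{M}\times M$, which literally equals $\mathsf{e}$. For the transpose case, the transpose of a correspondence $C \in \cF(\overline{G})$ is the image of $C$ under the swap, which is exactly the geometric composition $\mathsf{i}\circ C$; thus $\mathsf{i}$ represents the transpose functor. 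For the composition case, one unwinds the geometric composition $\mathsf{m}\circ(C_1\times C_2)$ and checks that the matching condition $a_1 = b_2$ in $\mathsf{m}$ is precisely the condition for composing correspondences $M\to M$; so the underlying Lagrangian of $C_1\otimes C_2$ is the ordinary composition of $C_1$ and $C_2$ as correspondences, which is the monoidal product produced by Proposition \ref{prop:monoid-to-monoidal}.

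What makes the proposition ``immediate'' is that in each of the three cases the verification reduces to a set-theoretic identification of Lagrangian submanifolds: there is no curve counting to do, and the three functors being matched are all standard endofunctors (or bifunctors) on $\cF(\overline{M}\times M)$ that are well-known to be represented by the diagonal, the graph of the swap, and the graph of composition, respectively. The only subtleties are (i) getting source-target conventions right so that $\mathsf{m}$, $\mathsf{e}$, $\mathsf{i}$ match on the nose, which is the main (and really only) obstacle, and (ii) whether one wants to upgrade the identifications from bare Lagrangians to full branes and check functoriality at the level of all $A_\infty$-operations. The proposition is stated only at the level of underlying Lagrangian submanifolds, so (ii) is sidestepped; but I would remark that once the identification of underlying Lagrangians is in hand, the brane structures on each side can be matched canonically from the relative spin and orientation data on $M$, so that the full representation of functors follows from general properties of the Wehrheim-Woodward-Ma'u construction.
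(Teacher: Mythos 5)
Your proposal is correct and matches the paper's intent: the paper offers no written proof (it declares the proposition ``immediate''), and the content it has in mind is exactly the set-theoretic identification you carry out, namely that in the pair groupoid the graph of multiplication, the identity section, and the graph of inversion coincide with the standard correspondences representing composition, the identity functor, and transposition, with the only real work being the bookkeeping of source/target and sign conventions. Your closing remark that the statement is only at the level of underlying Lagrangians, deferring brane structures and higher $A_\infty$-functoriality, is also consistent with how the paper frames the claim.
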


In this case, there is a single isomorphism class/symplectic leaf consisting of the entire manifold $M$ itself. Thus there is an action of $\cF(G)$ on $\cF(M)$, which is nothing but the action of endofunctors of $\cF(M)$ on $\cF(M)$.

\subsection{The case of $\pi = 0$}
\label{sec:cotangent}
On any smooth manifold $M$, the zero tensor $\pi = 0$ defines a Poisson structure. A symplectic groupoid integrating this is $G = T^*M$ with its canonical symplectic form. There are no morphisms between different points of $M$, and the endomorphisms of the point $x \in M$ is $T^*_xM$ with the group structure given by addition of covectors. Thus the identity section is the zero section, and the inversion map is fiberwise negation.

Now \textcolor{highlight}{assume $M$ is real-analytic, and let us} interpret $\cF(G) = \cF(T^*M)$ as the infinitesimal (Nadler-Zaslow) Fukaya $A_{\infty}$-category. The Nadler-Zaslow correspondence \cite[Theorem 1.0.1]{nadler-zaslow}, \cite{nadler} is \textcolor{highlight}{a quasi-equivalence of $A_{\infty}$-categories} with constructible sheaves on $M$,
\begin{equation}
  \cF(G) \cong \Sh_c(M).
\end{equation}
The category $\Sh_c(M)$ has a well-known monoidal structure.
\begin{itemize}
\item The monoidal product $\otimes$ is the tensor product of constructible sheaves.
\item The unit object $\cO$ is the constant sheaf $\underline{\K}$ on $M$.
\item The duality $\cD$ is Verdier duality.
\end{itemize}

In this case, our justification that these monoidal structures correspond to each other passes through the Nadler-Zaslow correspondence: we can represent functors on $\Sh_c(M)$ as integral transforms, whose kernels are constructible sheaves on Cartesian powers of $M$. From such a kernel we obtain a Lagrangian brane from Nadler-Zaslow, and this can be compare with the structural Lagrangians of the symplectic groupoid structure. 
\begin{prop}
  Let $G = T^*M$ with the symplectic groupoid structure of fiberwise addition, and let $\mathsf{m}$, $\mathsf{e}$, and $\mathsf{i}$ be as in Proposition \ref{prop:everything-lagrangian}.
  \begin{itemize}
  \item Let $\K_{\Delta_3} \in \Sh_c(M\times M \times M)$ be the constructible sheaf that represents the tensor product on $\Sh_c(M)$. Then the singular support of $\K_\Delta$ is
    \begin{equation}
      SS(\K_{\Delta_3}) = \{(q_1,p_1,q_2,p_2,q_3,p_3) \mid q_1=q_2=q_3, \quad p_1+p_2+p_3 = 0\} \subset T^*(M\times M \times M)
    \end{equation}
    where we use $q$ to denote points of $M$ and $p$ to denote covectors. Under the isomorphism
    \begin{equation}
      T^*(M\times M \times M) \cong \overline{G} \times \overline{G} \times G
    \end{equation}
    that flips the sign of $p_1$ and $p_2$, $SS(\K_\Delta)$ corresponds to $\mathsf{m}$.
  \item Let $\K_M \in \Sh_c(M)$ be the constructible sheaf that is the unit for the tensor product. Then
    \begin{equation}
      SS(\K_M) = M \subset T^*M = G
    \end{equation}
    coincides with $\mathsf{e}$.
  \end{itemize}
\end{prop}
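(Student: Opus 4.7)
The strategy for both bullets is to identify each sheaf explicitly as a pushforward of a constant sheaf along a diagonal embedding, then apply the standard fact that for a closed embedding $i : Z \hookrightarrow X$ of a submanifold, $SS(i_* \K_Z) = T^*_Z X$ (the conormal bundle), and finally to match the resulting conormal subset against the structural Lagrangian from Proposition \ref{prop:everything-lagrangian} for the fiberwise-additive groupoid structure on $G = T^*M$.

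For the multiplication bullet, I would first identify $\K_{\Delta_3}$ explicitly. Writing $\cF \otimes \cG = \Delta^*(\cF \boxtimes \cG)$ for the diagonal $\Delta : M \to M\times M$, and recasting this as an integral transform $(p_3)_!(K \otimes p_1^*\cF \otimes p_2^*\cG)$ on $M^3$, the projection formula together with $p_i \circ \Delta_3 = \Id_M$ shows that one may take $K = (\Delta_3)_* \K_M$, where $\Delta_3 : M \hookrightarrow M^3$, $q \mapsto (q,q,q)$, is the full diagonal. Its singular support is thus the conormal bundle to $\Delta_3$. The tangent space to $\Delta_3$ at $(q,q,q)$ is the small diagonal $\{(v,v,v)\} \subset (T_qM)^3$, whose annihilator is precisely $\{(p_1,p_2,p_3) : p_1+p_2+p_3 = 0\}$, yielding the formula in the statement. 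Applying the sign-flip $(p_1,p_2)\mapsto (-p_1,-p_2)$ — which is the symplectomorphism $T^*(M^3) \to \overline{G}\times\overline{G}\times G$ — converts $p_1+p_2+p_3=0$ into $p_3 = p_1+p_2$ over the small diagonal in the base, and this is precisely $\mathsf{m}$ since $m((q,p_1),(q,p_2)) = (q,p_1+p_2)$.

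For the unit bullet, $\K_M$ as a sheaf on $M$ is $(\Id_M)_* \K_M$, so the conormal-bundle formula immediately yields that $SS(\K_M)$ is the zero section of $T^*M$. For the fiberwise-additive groupoid structure, the identity at $q$ is the zero covector $0_q \in T^*_qM$, so $\mathsf{e}$ is the zero section, and the two subsets agree on the nose. The main technical care throughout is in tracking the sign convention identifying $T^*(M^3)$ with $\overline{G}\times\overline{G}\times G$; once one commits to negating the first two cotangent-fiber coordinates, both bullets reduce to direct comparison with no substantive obstacle.
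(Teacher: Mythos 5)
Your proof is correct and takes essentially the same approach the paper intends: the paper states this proposition without a written proof, but its proof of the analogous proposition for $G = T^*K$ proceeds exactly as you do, by identifying the kernel as (a local system on) a graph or diagonal, computing the conormal bundle as the annihilator of the tangent space, and matching against the structural Lagrangian after the sign flip on the first two cotangent factors. Your identification of $\K_{\Delta_3}$ as $(\Delta_3)_*\K_M$ via the projection formula and the resulting condition $p_1+p_2+p_3=0$ over the small diagonal are exactly right.
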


\textcolor{highlight}{The statement that fiberwise negation is Verdier duality is explained in \cite[Section 3.4.1]{nadler}. It is slightly difficult} to formulate in the framework of the proposition above because Verdier duality is a contravariant functor, and so it does not have a kernel \emph{per se}. On the other hand, it is natural to convert contravariant functors into covariant ones by pre-composing with Verdier duality itself. Then Verdier duality goes over to the identity functor, which is represented by a certain $\K_{\Delta_2}$ in $\Sh_c(M \times M)$, and
\begin{equation}
  SS(\K_{\Delta_2}) = \{(q_1,p_1,q_2,p_2) \mid q_1 = q_2, \quad p_1+ p_2 = 0\} \subset T^*(M\times M)
\end{equation}
and under the isomorphism $T^*(M\times M) = G\times G$, this Lagrangian corresponds to $\mathsf{i}$.


In this case, the isomorphism classes/symplectic leaves are the singleton sets $\{p \} \subset M$. The action correspondence $\mathsf{a} \subset T^*M \times \{p\} \times \{p\}$ is the cotangent fiber $T^*_pM$. The category $\cF(\{p\})$ is equivalent to $\Perf(\K)$, and the action
\begin{equation}
  \rho : \cF(T^*M) \times \Perf(\K) \to \Perf(\K)
\end{equation}
takes a constructible sheaf $\cE$ and a vector space $V$ to $\cE_p \otimes V$, the stalk of $\cE$ at $p$ tensored with $V$.

An alternative story involves a different version of the Fukaya category, namely the wrapped Fukaya $A_{\infty}$-category $\cW(T^{*}M)$. By results of Abbondandolo-Schwarz and Abouzaid \cite{abbondandolo-schwarz-loop-product,abouzaid-cotangent-generates,abouzaid-based-loops}, this category is equivalent to the DG category of DG-modules over the DG-algebra $C_{-*}(\Omega M)$ of chains on the based loop space of $M$, where the multiplication comes from concatenation of loops. More precisely, we can write
\begin{equation}
  \cW(T^{*}M) \cong \Perf C_{-*}(\Omega M)
\end{equation}
Such modules are also known as $\infty$-local systems. The monoidal operation is tensor product of local systems. One observes that this operation generally does not map a pair of objects in $\Perf C_{-*}(\Omega M)$ to one in $\Perf C_{-*}(\Omega M)$, but rather to one in $\Mod C_{-*}(\Omega M)$. Thus it is not $\cW(T^{*}M)$ itself but $\Mod \cW(T^{*}M)$ that carries a monoidal structure (compare remark \ref{rmk:size} above).

\subsection{The case of $\pi = 0$, $M$ integral affine}
\label{sec:torusbundle}
As before, we take $M$ a smooth manifold with $\pi = 0$. As symplectic integrations are not unique, it may be possible to find a symplectic integration that differs from $T^*M$. As an instance of this phenomenon, assume that $M$ carries an integral affine structure. This means that $M$ comes equipped with an atlas such that the transition functions between coordinate charts are restrictions of the action of $\GL(n,\Z) \ltimes \R^n$ on $\R^{n}$ to open sets. Then the lattices of integral covectors in each cotangent space are invariantly defined and form a local system of lattices $\Lambda_\Z^* \subset T^*M$. Another symplectic integration of the zero Poisson structure is then $G = T^*M/\Lambda_\Z^*$, where the symplectic structure descends from $T^*M$. Thus $G$ is a nonsingular Lagrangian torus fibration over $M$.

This is precisely the setting for Strominger-Yau-Zaslow mirror symmetry without corrections. This case has been studied by Aleksandar Suboti\'{c} in his 2010 Harvard Ph.D.~thesis and other unpublished work. Under mirror symmetry, $\cF(G)$ is equivalent to $D^b\Coh(X)$, where $X$ is the mirror Calabi-Yau variety. 
\begin{itemize}
\item The monoidal product $\otimes$ is the tensor product of coherent sheaves.
\item The unit object $\cO$ is the structure sheaf $\cO_X$.
\item The duality $\cD$ is coherent duality $\cHom(-,\cO_X)$. ($\cO_X$ is the dualizing sheaf on a smooth Calabi-Yau.)
\end{itemize}
\begin{prop}
  The Lagrangian branes involved in the monoidal structure of Suboti\'{c} are supported on the Lagrangian submanifolds $\mathsf{m}$,$\mathsf{e}$, and $\mathsf{i}$  that determine the groupoid structure of $G = T^*M/\Lambda_\Z^*$.
\end{prop}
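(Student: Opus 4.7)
The plan is to verify, for each of the three structural Lagrangians, that the functor it induces on $\cF(G)$ agrees under Subotić's SYZ equivalence $\cF(G) \cong D^b\Coh(X)$ with the corresponding monoidal operation on coherent sheaves. All calculations are local over the base $M$: since the Poisson structure on $M$ is zero and the fibration $G \to M$ is a nonsingular Lagrangian torus fibration, both the symplectic geometry and the mirror complex geometry are in the ``semiflat'' regime, and on each fiber they reduce to the classical Fourier--Mukai equivalence between a torus and its dual.

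First I would dispose of the unit. Under the SYZ dictionary, a Lagrangian section of $G \to M$ equipped with a flat $U(1)$-connection corresponds to a holomorphic line bundle on $X$, and the zero section $\mathsf{e}$ with trivial connection corresponds to $\cO_X$. Since $\cO_X$ is the unit for $\otimes$, this matches the role of $\mathsf{e}$ guaranteed by Proposition \ref{prop:everything-lagrangian}(2) and Proposition \ref{prop:groupoid-to-monoidal}.

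Next I would treat the multiplication $\mathsf{m} \subset \overline{G} \times \overline{G} \times G$. Restricting to a fiber over $x \in M$, let $T = T^*_xM / \Lambda^*_{\Z,x}$ and let $T^\vee$ be its dual (a fiber of $X$). The group structure on $T$ gives a multiplication $\mu : T \times T \to T$, and the restriction of $\mathsf{m}$ to this fiber is exactly the graph of $\mu$. The classical theorem of Mukai states that the Fourier--Mukai transform with the Poincaré kernel intertwines the convolution product $\mu_*(\mathcal{E} \boxtimes \mathcal{F})$ on $D^b\Coh(T)$ with the pointwise tensor product on $D^b\Coh(T^\vee)$. Translated through SYZ, the Lagrangian correspondence given by the graph of $\mu$ represents the tensor product on line bundles over $T^\vee$. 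For the last Lagrangian, $\mathsf{i}$, fiberwise negation on $G$ is mirror to fiberwise negation on $X$; on each fiber $T^\vee$ this sends $\mathcal{L}_a \mapsto \mathcal{L}_{-a} = \mathcal{L}_a^{-1} = \cHom(\mathcal{L}_a, \cO_X)$, so $\mathsf{i}$ represents coherent duality after the same covariant-versus-contravariant reconciliation used for Verdier duality in the Nadler--Zaslow case (here made cleaner by the Calabi--Yau property, which makes $\cO_X$ itself the dualising sheaf).

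The main obstacle is the globalization over $M$: one must promote the fiberwise statements to a statement about families, checking that the brane data (grading, spin/orientation data, and the flat $U(1)$-connection encoding the position of a section) transports correctly through Subotić's equivalence, and that the $A_\infty$ operations determined by the quilted pseudo-holomorphic polygons with seams along $\mathsf{m}$, $\mathsf{e}$, and $\mathsf{i}$ reduce, in the semiflat geometry, to the algebraic formulas for $\otimes$, $\cO_X$, and $\cHom(-,\cO_X)$. Because the semiflat setting carries no nonconstant holomorphic disks with boundary on the relevant torus fibers, the quilted disk counts collapse to fiberwise intersection numbers that match exactly the kernel computations on the mirror side, so the match follows from the fiberwise Fourier--Mukai statement together with the smooth variation of the torus fibers over $M$.
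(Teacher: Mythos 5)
The paper actually supplies no proof of this proposition: it is offered as an observation about Suboti\'{c}'s thesis, where the monoidal product on $\cF(T^*M/\Lambda_\Z^*)$ is \emph{defined} by the fiberwise-addition correspondence, the unit by the zero section, and the duality by fiberwise negation --- so the literal statement about the supports of the branes is essentially a matter of inspecting that construction. Your argument takes a genuinely different route: instead of identifying the supports directly, you verify that the structural Lagrangians $\mathsf{m}$, $\mathsf{e}$, $\mathsf{i}$ induce, under the SYZ equivalence $\cF(G)\cong D^b\Coh(X)$, the operations $\otimes$, $\cO_X$, and $\cHom(-,\cO_X)$, by reducing to the fiberwise Fourier--Mukai statement (Mukai's interchange of convolution and tensor product on dual tori) and invoking the absence of nonconstant disks in the semiflat regime to globalize. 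This is a sound and informative consistency check, at the same level of rigor as the rest of the paper, and it buys something the paper's bare assertion does not: an explanation of \emph{why} these particular Lagrangians must be the ones mirror to the coherent-sheaf operations. The one logical gap to be aware of is that you never actually touch Suboti\'{c}'s branes: showing that $\mathsf{m}$, $\mathsf{e}$, $\mathsf{i}$ are mirror to $(\otimes,\cO_X,\cD)$ identifies them with Suboti\'{c}'s correspondences only if you either look at his definition (at which point the proposition is immediate) or add a uniqueness argument saying that a monoidal structure mirror to the tensor product determines the supporting Lagrangians of its structural branes; as written, you prove a mirror-symmetry statement adjacent to, rather than identical with, the one asserted.
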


Once again, the isomorphism classes/symplectic leaves are the singleton sets $\{p\} \subset M$. The action correspondence $\mathsf{a}$ is the torus fiber $L_p$, which corresponds to the skyscraper sheaf of a point on the mirror. Now $\cF(G)$ acts on $\Perf(\K)$ by tensoring with the fiber (not stalk) $\Hom(\cO_p,\cE)$ of the sheaf $\cE$ at $p$.

\subsection{The case of $\mathfrak{k}^*$}
\label{sec:gdual}

For a really interesting Poisson structure, one can consider the dual space of a Lie algebra $\mathfrak{k}$.\footnote{We use $\mathfrak{k}$ (German `\textit{k}') rather than $\mathfrak{g}$ for the Lie algebra both to indicate that we are mainly interested in compact groups and to avoid conflict with the use of the letter $G$ for the symplectic groupoid.} In this case, a symplectic integration is $G = T^*K$, where $K$ is a Lie group integrating $\mathfrak{k}$.

\textcolor{highlight}{Once again, using the Nadler-Zaslow quasi-equivalence, we may identify the infinitesimal Fukaya $A_{\infty}$-category of $G$ with $\Sh_c(K)$, constructible sheaves on $K$.} Now the monoidal structure of interest is the one that actually uses the fact that $K$ is a Lie group.
\begin{itemize}
\item The monoidal product $\otimes$ is the convolution of sheaves, namely, push-forward under the multiplication map $m : K \times K \to K$.
\item The unit object $\cO$ is the skyscraper sheaf at the identity element $e \in K$.
\item The duality $\cD$ is pull-back under the inversion map $i: K \to K$.
\end{itemize}

\begin{prop}
  Let $G = T^*K$, with the symplectic groupoid structure that integrates the Poisson structure on $\mathfrak{g}^*$, and let $\mathsf{m}$,$\mathsf{e}$, and $\mathsf{i}$ be as in Proposition \ref{prop:everything-lagrangian}.
  \begin{itemize}
  \item The singular support of the kernel for convolution corresponds to $\mathsf{m}$ under the map $T^*(K^3) \cong (T^*K)^3 \cong \overline{T^*K}\times \overline{T^*K} \times T^*K$ that flips the sign of the covector on the first two factors.
  \item The singular support of the unit object for convolution corresponds to $\mathsf{e}$ in $G = T^*K$.
  \item The singular support of the inversion map composed with Verdier duality corresponds to $\mathsf{i}$ in $G \times G$.
  \end{itemize}
\end{prop}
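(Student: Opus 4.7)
The three bullets are independent and local. The common approach in each case is to identify the relevant constructible sheaf on a Cartesian power of $K$, express its singular support as a conormal bundle to a graph, and match this Lagrangian with the one prescribed by the cotangent groupoid structure on $T^*K\rightrightarrows\mathfrak{k}^*$. All three comparisons ultimately reduce to elementary identities among $L_g^*$, $R_g^*$, and $\mathrm{Ad}_g^*$.

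For the first bullet, the convolution kernel is the constant sheaf on the graph $\Gamma_m = \{(g_1, g_2, g_1 g_2)\}\subset K^3$. I would compute $N^*\Gamma_m$ using the formula $dm_{(g_1,g_2)}(X,Y) = R_{g_2,*}X + L_{g_1,*}Y$, which identifies the conormal covectors as triples of the form $(-R_{g_2}^*\alpha,\; -L_{g_1}^*\alpha,\; \alpha)$. Independently, I would read off $\mathsf{m}$ directly from the groupoid structure maps: the source and target maps are $s(g,\xi) = R_g^*\xi$ and $t(g,\xi) = L_g^*\xi$, composability collapses to a single identity which holds automatically thanks to the commutation of $L_g$ and $R_h$, and the product covector $\xi_3$ is then uniquely determined. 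The sign flip on the first two factors specified in the statement converts one description into the other.

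The second bullet is essentially immediate: the skyscraper sheaf at $e\in K$ has singular support equal to the entire cotangent fiber $T^*_e K$, which is precisely the image of the identity bisection $\mathsf{e}\colon \mu\mapsto \mu\in T^*_e K$. For the third bullet, the crucial observation is that the cotangent lift $\tilde\iota(g,\xi) = (g^{-1}, \iota^*\xi)$ of the group inversion $\iota$ and the groupoid inversion $i_G$ differ by fiberwise negation on $T^*K$: in right trivialization one finds $\tilde\iota(g,\mu) = (g^{-1}, -\mathrm{Ad}_g^*\mu)$ while $i_G(g,\mu) = (g^{-1}, \mathrm{Ad}_g^*\mu)$. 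Since the preceding subsection identifies Verdier duality with fiberwise negation on the Nadler-Zaslow side, composing $\iota^*$ with Verdier duality converts the Lagrangian represented by $\tilde\iota$ into the graph of $i_G$, which is $\mathsf{i}$.

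The main obstacle is not geometric but rather the careful bookkeeping of sign and orientation conventions, which enter in several places simultaneously: left versus right trivialization of $T^*K$, the order of groupoid composition, the standard identification of $T^*(K^n)$ with a product of copies of $T^*K$ and which factors receive a sign flip, and the distinction between the cotangent lift of $\iota$ and the groupoid inversion. Once these are pinned down consistently, each of the three Lagrangian equalities reduces to a short calculation.
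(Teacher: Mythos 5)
Your proposal is correct and follows the same overall strategy as the paper: identify the convolution, unit, and inversion kernels as constant sheaves on graphs in Cartesian powers of $K$, compute the conormal bundles of those graphs, and match them against the structure Lagrangians of the cotangent groupoid after the prescribed sign flips. The differences are in the bookkeeping. For the multiplication, the paper passes to the left trivialization $T^*K \cong K \times \mathfrak{k}^*$, presents the groupoid as the coadjoint action groupoid with $s(g,\xi) = \xi$ and $t(g,\xi) = \Ad^*(g)(\xi)$, and computes the tangent space to the graph by expanding $f\exp(tZ) = h\exp(tY)g\exp(tX)$ to first order before dualizing; you instead work intrinsically with $dm(X,Y) = R_{g_2,*}X + L_{g_1,*}Y$ and characterize the product covector by $R_{g_2}^*\xi_3 = \xi_1$, $L_{g_1}^*\xi_3 = \xi_2$. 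The two computations are equivalent; yours makes the automatic composability (via $L_{g_1}\circ R_{g_2} = R_{g_2}\circ L_{g_1}$) more transparent, while the paper's exponential calculation produces the relations $\zeta=-\xi$, $\eta=\Ad(g)^*\xi$ in exactly the form needed to compare with its explicit description of $\mathsf{m}$. For the inversion, the paper computes the conormal of $\{(h,g)\mid hg=e\}$ directly and reads off $\eta = \Ad^*(g)(\xi)$, whereas you compare the cotangent lift of group inversion with the groupoid inversion and invoke the identification of Verdier duality with fiberwise negation from the $\pi=0$ section; your route is conceptually cleaner (it explains \emph{why} Verdier duality must enter: the cotangent lift is symplectic while $i_G$ is antisymplectic), at the cost of leaning on that earlier identification rather than redoing the conormal computation. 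Both arguments establish the proposition.
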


\begin{proof}
  The kernel for convolution is the trivial local system supported on the the graph of the group multiplication.
  \begin{equation}
    \K_* = \{(h,g,f) \mid f = hg \} \subset K \times K \times K
  \end{equation}
  The singular support of $\K_*$ is the conormal bundle of this graph, as a submanifold of $T^*(K\times K \times K)$.

  On the other hand, to describe $m$, we note that, using left multiplication, we can identify $T^*K \cong K \times \mathfrak{k}^*$. In the latter description, the groupoid structure is given by the action groupoid construction for the coadjoint action of $K$ on $\mathfrak{k}^*$. Thus if $(g,\xi) \in K \times \mathfrak{k}^*$ is a morphism, we have $s(g,\xi) = \xi$ and $t(g,\xi) = \Ad^*(g)(\xi)$. The composition $m((h,\eta),(g,\xi))$ is defined if and only if $\eta = \Ad^*(g)(\xi)$, and in that case it equals $(hg, \xi)$. Thus
  \begin{equation}
    \mathsf{m} \cong \{((h,\eta),(g,\xi),(f,\zeta) \mid f = hg,\  \eta = \Ad^*(g)(\xi),\  \zeta = \xi\} \subset (K\times \mathfrak{k}^*)^3
  \end{equation}

  In order to compare the two Lagrangians, we need to trace through the isomorphisms $TK \cong K \times \mathfrak{k}$ and $T^*K \cong K \times \mathfrak{k}^*$ given by left multiplication. The isomorphism $TK \cong K \times \mathfrak{k}$ takes $(g,X) \in K \times \mathfrak{k}$ to $(g,L_g(X)) \in TK$. To compute the tangent space to the graph $\{(h,g,f) \mid f = hg\} \subset K^3$, we represent points nearby to $h$, $g$, and $f$ by $g\exp(tX)$, $h\exp(tY)$, and $f\exp(tZ)$ respectively. Then the relation
  \begin{equation}
    f\exp(tZ) = h\exp(tY)g\exp(tX)
  \end{equation}
  can be expanded with respect to $t$ to give
  \begin{equation}
    f + tfZ + \cdots = hg + t(hYg + hgX) + \cdots
  \end{equation}
  So the relation between the tangent vectors is
  \begin{equation}
    fZ = hYg + hgX
  \end{equation}
  multiplying on the left by $f^{-1} = g^{-1}h^{-1}$ yields
  \begin{equation}
    Z = g^{-1}Yg + X
  \end{equation}
  as the relation that defines the tangent space to the graph. Now suppose that the element $(\xi,\eta,\zeta) \in (\mathfrak{k}^*)^3$ annihilates this space, where $\xi$ pairs with $X$, $\eta$ with $Y$, and $\zeta$ with $Z$. Because vectors of the form $(X,Y,Z) = (Z,0,Z)$ are in the space, we must have $\zeta = -\xi$, and because vectors of the form $(X,Y,Z) = (X,-\Ad(g)X,0)$ are in the space, we must have $\xi = \Ad(g^{-1})^*(\eta)$, or $\eta = \Ad(g)^*(\xi)$. Flipping the signs of $\xi$ and $\eta$ preserves the relation $\eta = \Ad(g)^*(\xi)$ and takes $\zeta = -\xi$ to $\zeta = \xi$, and thus the relations defining the conormal bundle of the graph of multiplication correspond to the relations defining $\mathsf{m}$.

  For the unit object, this is nothing but $\K_e$, where $e \in K$ is the identity element of the group. The singular support is $T^*_eK$, which consists of the pairs $(e,\xi)$ for $\xi \in \mathfrak{k}^*$, and indeed these are the identity morphisms for the groupoid structure.

  For inversion, the singular support is the conormal bundle of the graph of the group inversion
  \begin{equation}
    \{(h,g) \mid hg = e\}\subset K\times K
  \end{equation}
  Again writing $h\exp(tY)$ and $g\exp(tX)$ for nearby elements, we find
  \begin{equation}
    g^{-1}Yg + X = 0
  \end{equation}
  is the relation defining the tangent space, which implies that the cotangent space is defined by the relation $\eta = \Ad^*(g)(\xi)$. On the other hand, in the groupoid structure on $K\times \mathfrak{k}^*$, the inverse of $(g,\xi)$ as a morphism from $\xi$ to $\Ad^*(g)(\xi)$ is simply $(g^{-1},\Ad^*(g)(\xi))$ as a morphism from $\Ad^*(g)(\xi)$ to $\xi$.
\end{proof}

In this case, the symplectic leaves are coadjoint orbits, and the action has been studied quite extensively from the perspective of Fukaya categories, though possibly without mentioning the word ``groupoid.'' This action (or more precisely the action on a Hamiltonian $G$-manifold) is what appears in the work of Constantin Teleman \cite{teleman-gauge-icm} on 3D gauge theory and mirror symmetry, and it was exploited to great effect by Jonny Evans and Yank\i{} Lekili \cite{talking-bout-my-g-generation} to study the question of finding objects that generate the Fukaya category of a Hamiltonian $G$-manifold. The action correspondence is what those authors call the \emph{moment Lagrangian}.

The case of $G = T^*K$ has more structure than the other cases, because $G$ is a symplectic groupoid in two different ways, it is both the coadjoint action groupoid of $\mathfrak{k}$ and the cotangent bundle of $K$. Taking the transpose of the fiberwise addition correspondence gives a correspondence $\Delta : G \to G \times G$. Taking the monoidal structure considered above together with $\Delta$ makes $G= T^*K$ into a Hopf algebra in the $2$-category $\Sympcat$, and hence $\cF(G)$ is a Hopf algebra in $\Ainfcat$.

\subsection{The case of symplectic fibrations}
\label{sec:fibrations}

More general Poisson structures can sometimes be thought of as ``mixtures'' of the previous examples. As a specific example, let $(F,\omega_F)$ be a symplectic manifold, and let $\mu \in \Symp(F,\omega_F)$ be a symplectic automorphism. Consider the mapping torus
\begin{equation}
  M_\mu = \R \times F/\langle\tau\rangle
\end{equation}
Where $\tau$ is the diffeomorphism $\tau(t,x) = (t-1,\mu(x))$.
Let $\pi_F = \omega_F^{-1}$ denote the nondegenerate Poisson tensor on $F$. Taking the product with the zero Poisson structure on $\R$ gives a Poisson structure on $\R \times F$. This descends to a Poisson structure on $M_\mu$ be cause $\tau$ is a Poisson automorphism; call the result $\pi_\mu$. The symplectic leaves of this Poisson structure are precisely the fibers of $M \to \R/\Z$, which are symplectomorphic to $(F,\omega_F)$. 

We can get a symplectic integration of this Poisson structure by combining the previous examples. First, a symplectic integration of $(F,\pi_F)$ is $(F\times F,(-\omega_F) \times \omega_F)$, and a symplectic integration of $(\R,0)$ is $(T^*\R,\omega_{\text{can}})$. Their product $T^*\R \times \overline{F}\times F$ is an integration of $\R \times F$, and the automorphism $\tau$ lifts to an automorphism $\tilde{\tau}$ of the symplectic groupoid structure. Then we have that
\begin{equation}
  G_\mu = (T^*\R \times \overline{F} \times F)/\langle\tilde{\tau}\rangle
\end{equation}
is a symplectic integration\footnote{We get the same symplectic manifold by taking the symplectic mapping torus of $\mu\times \mu \in \Symp(\overline{F}\times F)$.} of $(M_\mu,\pi_\mu)$. Observe that there is a map $G_\mu \to T^*(\R/\Z)$.

The Fukaya category of things like $G_\mu$ contains some interesting objects. For instance, let $\phi \in \Symp(F,\omega_F)$ be another symplectic automorphism. Then the graph $\Gamma(\phi) \subset \overline{F} \times F$ is Lagrangian. One way to extend this to a Lagrangian in $G_\mu$ is to place $\Gamma(\phi)$ in fiber of $G_\mu \to T^*(\R/\Z)$ and cross with a cotangent fiber on $T^*(\R/\Z)$. Another way is to try ``cross with $\R/\Z$''. In order for it to close up, $\phi$ and $\mu$ must commute. It other words, it is the centralizer $C_{\Symp(F,\omega_F)}(\mu)$ that gives rise to Lagrangians in $G_\mu$. 

For each $p \in \R/\Z$, we have a symplectic leaf $F_p$ that is symplectomorphic to $(F,\omega)$, and $\cF(G_\mu)$ acts on $\cF(F_p)$ by endofunctors. Some objects of $\cF(G_\mu)$ act by a nonzero functor only on a single fiber, while others, such as the ones coming from $\phi \in C_{\Symp(F,\omega_F)}(\mu)$ act by a nonzero functor on all fibers.

Note also that there is an action of $G_\mu$ on the symplectic mapping torus $\R \times M_\mu$. \textcolor{highlight}{The Fukaya category of a symplectic mapping torus $\R \times M_{\mu}$ has been studied in the wrapped setting by Kartal \cite{kartal-dynamical, kartal-distinguishing}, who has shown that it coincides with a categorical mapping torus construction. We expect that, in general, there is a collection of $A_{\infty}$-functors $\cF(\R\times M_\mu) \to \cF(F_p)$ given by intersection with a fiber, and that the $A_{\infty}$-category $\cF(G_\mu)$ corresponds of $A_{\infty}$-endofunctors of $\cF(\R\times M_\mu)$ whose action respects this extra structure.}

It is perhaps interesting that even this case, which is rather simple from the point of view of Poisson geometry (the Poisson tensor has constant corank one, the symplectic foliation is a locally trivial fibration, and there is no variation in the symplectic form over the leaf space), already pushes us into some of the less-explored territory in the theory of Fukaya categories.

\subsection{Other module categories}
\label{sec:module-cats}

In addition to the module $A_{\infty}$-categories $\cF(F)$ for a symplectic leaf $F$, the $A_{\infty}$-category $\cF(G)$ may have other interesting module categories.

\textcolor{highlight}{\begin{itemize}
\item In the case of $G = T^*M$, we may consider twisted cotangent bundle $T^{*}M_{\beta}$. This is constructed by taking a closed $2$-form $\beta \in \Omega^{2}_{\mathrm{closed}}(M)$, and equipping $T^{*}M$ with the symplectic form $\omega_{\mathrm{can}} + \pi^{*}(\beta)$, where $\omega_{\mathrm{can}}$ is the canonical symplectic form on the cotangent bundle. The fiberwise addition correspondence is still Lagrangian when considered as a correspondence $T^{*}M \times T^{*}M_{\beta} \to T^{*}M_{\beta}$, so the groupoid $T^{*}M$ acts on the twisted cotangent bundle $T^*M_\beta$.
\item In the case of a torus bundle $G = T^*M/\Lambda_\Z^*$, where $M$ carries an integral affine structure, we may consider twisted versions as well. These are constructed by taking $T^{*}M_{\beta}$ as above, and dividing by the local $\Lambda_{\Z}^{*}$ actions, which preserve $\pi^{*}(\beta)$ since it is pulled back from the base. The resulting manifold $G_{\beta} = T^{*}M_{\beta}/\Lambda_{\Z}^{*}$ still admits a Lagrangian torus fibration but does not generally admit a Lagrangian section unless $\beta$ is exact. According to Abouzaid \cite{abouzaid-family,abouzaid-hms-wo-corr}, if $\pi_{2}(M) = 0$, then $\cF(G)$ is embeds as an $A_{\infty}$-category into the category of coherent analytic sheaves on a rigid analytic mirror variety, and $\cF(G_{\beta})$ embeds into a category of twisted sheaves, twisted by a Gerbe constructed from $\beta$. The action of $\cF(G)$ on $\cF(G_{\beta})$ thus corresponds to action of untwisted sheaves on twisted sheaves by tensor product.
\item In the case of a mapping torus $M_\mu$ with fiber $F$, the groupoid $G_\mu$ acts on the symplectic mapping torus $\R \times M_\mu$. The action of $\cF(G_{\mu})$ on $\cF(\R\times M_{\mu})$ is a parameterized version of the action of $\cF(\overline{F} \times F)$ on $\cF(F)$.
\item In the case of $G = T^*K$, $K$ a compact Lie group, one can consider a symplectic manifold $X$ with Hamiltonian $K$-action. Such actions have been studied from the Fukaya categorical point of view by Teleman \cite{teleman-gauge-icm} and Evans-Lekili \cite{talking-bout-my-g-generation}.
\end{itemize}}

\subsection{Singular torus fibrations}
\label{sec:singular-torus-fibrations}

We would also like to mention one other phenomenon which seems to fit at this point in our discussion. The torus bundles $G = T^*M/\Lambda_\Z^*$ we have considered so far are nonsingular in the sense that the map $\pi : T^*M/\Lambda_\Z^* \to M$ is a submersion. However, it is well-known that in almost all instances of mirror symmetry this condition cannot be satisfied; the Lagrangian torus fibers necessarily degenerate to singular fibers. Let $\pi : X \to M$ be such a Lagrangian torus fibration with singularities. Then there is no reason for $X$ to admit a groupoid structure. If we let $\Delta \subset M$ be the discriminant locus (critical values of $\pi$), then $\pi^{-1}(M\setminus \Delta)$ consists of nonsingular torus fibers, and it is a symplectic groupoid with $M \setminus \Delta$ as its manifold of objects.  On the other hand, sitting inside $X$ is the locus where $\pi$ is a submersion:
\begin{equation}
  X^{\gp} = \{x \mid D\pi(T_xX) = T_{\pi(x)}M\} \subset X.
\end{equation}
Clearly $X^{\gp}$ contains all nonsingular fibers, so $\pi^{-1}(M\setminus \Delta) \subset X^{\gp}$, but it also includes some parts of the singular fibers. For instance, when $\dim X = 4$, and the singular fiber is a nodal torus, $X^{\gp}$ contains the nodal torus minus the node.

We expect that the symplectic groupoid structure of $\pi^{-1}(M\setminus \Delta) \to M \setminus \Delta$ extends to a symplectic groupoid structure on $X^{\gp} \to M$. An important special case where this can be made clear is when the original fibration $X \to M$ comes from algebraic geometry. Namely, suppose that $\pi : X \to M$ is a proper algebraic completely integrable system. Then $X^{\gp}$ is the associated abelian group scheme over $M$. The notion of ``abelian group scheme over $M$'' is essentially the algebraic analogue of ``abelian groupoid with objects $M$''.

We remark that the lack of properness of $X^\gp \to M$ indicates that care is required in the definition of $\cF(X^\gp)$. Nevertheless, the groupoid $X^\gp$ acts on $X$, so it is likely that $\cF(X^\gp)$ admits a monoidal action on $\cF(X)$. It seems likely that $\cF(X^\gp)$ and $\cF(X)$ are equivalent categories, allowing us to transfer the monoidal structure to $\cF(X)$.

\begin{rmk}
  When $X$ has complex dimension two (real dimension 4), the theory of algebraic completely integrable systems is essentially Kodaira's theory of elliptic fibrations. See Table I at \cite[p.~604]{kodaira} for the group structure associated to each of the singular fibers in Kodaira's classification.
\end{rmk}

\subsection{Coisotropic submanifolds as objects of a Fukaya category}
\label{sec:coisotropic-fukaya}

So far, we have been using symplectic groupoids $(G, M)$ as a source of monoidal Fukaya categories, but we have not drawn a significant connection to the Poisson geometry of the induced Poisson structure on manifold of objects $M$. One way to draw such a connection is to use the idea that a coisotropic submanifold of $M$ can often be promoted to a Lagrangian submanifold of $G$. An \emph{Lagrangian integration} of a coisotropic submanifold $C \subseteq M$ is a Lagrangian \emph{subgroupoid} $L \subset G$ whose set of objects is $C$. 

For example, in the case $G = T^{*}M$, and $\pi = 0$, any submanifold $C \subseteq M$ is coisotropic. Then the conormal bundle $T^{*}_{C}M$ is a Lagrangian subgroupoid integrating $C$.

As in the case of integrations of a Poisson manifold, a smooth embedded Lagrangian subgroupoid integrating a coisotropic submanifold need not exist, but if it does, it gives us a way to treat a coisotropic submanifold as an object in a Fukaya $A_{\infty}$-category $\cF(G)$. In terms of the monoidal structure, the condition that $L$ be a subgroupoid translates into a relation like $L \otimes L \cong L$, meaning that $L$ is idempotent for the monoidal structure.

This idea leads to a proposal for a Floer cohomology group associated to a pair $C_{1}, C_{2}$ of coisotropic submanifolds in a Poisson manifold $M$. Namely, find a symplectic integration $G$ of $M$ and Lagrangian subgroupoids $L_{i}$ integrating $C_{i}$ ($i = 1,2$), and take the Floer cohomology $HF^{*}_{G}(L_{1},L_{2})$. Alternatively, using the duality, one could express this group as $HF^{*}(\cO,L_{2} \otimes \cD(L_{1}))$. It is in this form that M. Gualtieri \cite{gualtieri-talk} has proposed to use the monoidal structure of the Fukaya category of a symplectic groupoid as an avenue to define the Floer cohomology of more general branes in Poisson and generalized complex geometry.

\subsection{Speculations on family Floer theory and the ``orbit method''}
\label{sec:speculation}

While $\cF(G)$ acts on any symplectic leaf in $M$, it would be more satisfying to understand how the action on the various leaves fits together. At least in the cases of Sections \ref{sec:case-symp}, \ref{sec:cotangent}, and \ref{sec:fibrations}, it is possible to interpret the symplectic groupoid as endomorphisms of a sheaf of categories. Namely, consider the leaf space of the symplectic foliation on $(M,\pi)$. Taking the Fukaya categories of the leaves produces a sheaf of categories over the leaf space, and there is a functor from $\cF(G)$ to endomorphisms of this sheaf (at some level, a collection of endofunctors of the stalks). An interpretation as endomorphisms of an object gives another reason that the category is monoidal: endomorphisms can be composed.

Another perspective, suggested to me by David Ben-Zvi, is that the idea of attaching module categories to symplectic leaves is reminiscent of Kirillov's orbit method (corresponding to the case $T^*K$). It would be interesting to understand if and in what sense the symplectic leaves really give a complete set of irreducible module categories for the monoidal $A_{\infty}$-category $\cF(G)$.

One way to get a hint as to how this might work is to compare with family Floer theory \cite{family-fukaya,abouzaid-family,abouzaid-hms-wo-corr} and the Nadler-Zaslow correspondence \cite{nadler-zaslow,nadler} corresponding to the cases $G= T^*M/\Lambda_\Z^*$ and $G = T^*M$ respectively, which from this perspective are instances of the orbit method, the orbits being simply the points of $M$. In the case $G = T^*M$, objects of $\cF(T^*M)$ are transformed directly into sheaves on the orbit space $M$, whereas in the case $G = T^*M$, the functor lands in a category of sheaves on a rigid analytic space living over $M$. This indicates that the structure of the isotropy group of the orbit plays a significant role.

\section{Commutativity}
\label{sec:commutativity}

There is a connection between symplectic groupoid structures and commutativity. In the context of homological mirror symmetry, \textcolor{highlight}{symplectic homology $SH_*(G)$ is known to be the Hochschild homology of the wrapped Fukaya $A_{\infty}$-category $\cF(G)$ for wide classes of symplectic manifolds, including non-degenerate Liouville manifolds \cite{ganatra-thesis}.} In general, $\cF(G)$ is an $A_\infty$-category, or what is known after D.~Orlov as a ``noncommutative variety.'' The Hochschild homology of a noncommutative variety has no reason to carry a natural ring structure, while that of a commutative variety does have a ring structure, corresponding under Hochschild-Kostant-Rosenberg to the wedge product of differential forms.

In more elementary terms, we posit that there is a tension between the following two points:
\begin{itemize}
\item Given an object $L$ of the Fukaya category, the endomorphism algebra $\End^*(L) = HF^*(L,L)$ is an $A_\infty$-algebra. The reason why it carries is an $A_\infty$ structure is because operations on Floer cohomologies are governed by the operads formed by moduli spaces of marked Riemann surfaces: the operad tells you what operations and relations you are supposed to have, then you go construct them. Discs with marked boundary points and a distinguished output form an $A_\infty$ operad, and this operad has no commutativity relation.
\item In many cases where $\End^*(L)$ can be calculated, it turns out to be graded commutative. It is also true that in many cases $\End^*(L)$ turns out to be strictly noncommutative. What accounts for this?
\end{itemize}

In fact, the known theory of monoidal structures on categories provides neat, essentially formal arguments that show that certain endomorphism algebras are commutative. Combining this with our previous observations, we hold that the presence of a symplectic groupoid structure is the natural symplectic-geometric reason
 for commutative Floer cohomology rings.

\subsection{Endomorphisms of the unit object}
\label{sec:unit-object}

Let us suppose that $(G,\omega)$ is a symplectic groupoid and that there is a corresponding monoidal structure on $\cF(G)$ (for some version of the Fukaya category; the following argument is rather independent of the details).
\textcolor{highlight}{
\begin{thm}
  Let $\cO$ be the unit object in the monoidal $A_{\infty}$-category $\cF(G)$. Then the degree-zero Floer cohomology endomorphism algebra, $\End(\cO) = HF^0(\cO,\cO)$, is commutative.
\end{thm}
\begin{proof}
This is a version of the well-known Eckmann-Hilton argument, which we reproduce here following \cite[Prop.~2.2.10]{tensor-cat}. Part of the unitality of the monoidal structure is the existence of an isomorphism $u : \cO \otimes \cO \cong \cO$. Conjugating by $u$ induces an isomorphism
\begin{equation}
  \Psi:  \End(\cO\otimes \cO) \cong \End(\cO)
\end{equation}
Letting $1_\cO$ denote the identity morphism of $\cO$, we have the further unitality property $\Psi(1_\cO \otimes a) = a = \Psi(a \otimes 1_\cO)$. Now consider for $a, b \in \End(\cO)$: 
\begin{align*}
  ab &= \Psi(a\otimes 1_\cO) \Psi(1_\cO \otimes b) && \text{unitality property}\\
  &= \Psi((a \otimes 1_\cO)(1_\cO \otimes b)) && \text{$\Psi$ is a ring isomorphism}\\
  &= \Psi(a\otimes b) && \text{functoriality of $\otimes$}\\
  &= \Psi((1_\cO \otimes b)(a \otimes 1_\cO)) && \text{functoriality of $\otimes$}\\
  &= \Psi(1_\cO \otimes b)\Psi(a \otimes 1_\cO) && \text{$\Psi$ is a ring isomorphism}\\
  & = ba && \text{unitality property}
\end{align*}
\end{proof}}

There is a similar argument that applies to the full Floer \textcolor{highlight}{cohomology} endomorphism algebra $HF^*(\cO,\cO)$, but it contains signs. Indeed, $HF^i(\cO,\cO)$ is not part of $\End(\cO)$ but rather is $\Hom(\cO,\cO[i])$. We can treat $HF^*(\cO,\cO)$ as an instance of the homogeneous coordinate ring construction described in the next section.

\subsection{Homogeneous coordinate rings}
\label{sec:homog-coord-ring}

Now let $L$ be an invertible object in $\cF(G)$. In our case, this condition can be written as $L \otimes \cD(L) \cong \cO$. For one thing, this condition implies that tensoring with $L$ is an autoequivalence. Therefore $\End(L) \cong \End(\cO)$, and hence $\End(L)$ is also commutative.

Another thing we can do is form what we shall call the \emph{homogeneous coordinate ring}
\begin{equation}
  R = \bigoplus_{k \geq 0} \Hom(\cO,L^{\otimes k})
\end{equation}
There are several ways to make this space into a ring. One product is defined as
\begin{equation}
  \Hom(\cO, L^{\otimes m}) \otimes \Hom(\cO, L^{\otimes n}) \to \Hom(L^{\otimes n}, L^{\otimes (m+n)}) \otimes \Hom(\cO,L^{\otimes n}) \to \Hom(\cO,L^{\otimes (m+n)})
\end{equation}
where the first map is applying the equivalence $L^{\otimes n} \otimes -$ to the first factor, and the second map is composition of morphisms. Another product is defined by
\begin{equation}
  \Hom(\cO,L^{\otimes m}) \otimes \Hom(\cO,L^{\otimes n}) \to \Hom(\cO \otimes \cO, L^{\otimes m}\otimes L^{\otimes n}) \to \Hom(\cO,L^{\otimes (m+n)})
\end{equation}
Where the first map is tensor product of morphisms, and the second map uses isomorphisms $\cO \otimes \cO \cong \cO$ and $L^{\otimes n} \otimes L^{\otimes m} \cong L^{\otimes (m+n)}$. 

One can run arguments similar in spirit to the Eckmann-Hilton argument in this case, but it is not quite as simple as the case of the unit object. For one thing, it is critical that $\cF(G)$ be not just a monoidal category but a \emph{symmetric} monoidal category, meaning that for every pair of objects $X$ and $Y$ there is a chosen isomorphism $t_{X,Y} : X \otimes Y \to Y \otimes X$ satisfying several coherence properties. Symmetric monoidal structures correspond to \emph{abelian} symplectic groupoids, where the automorphism of $G\times G$ that swaps the factors commutes with the composition; this class includes the cotangent bundle $G = T^*M$ and torus bundles $G = T^*M/\Lambda_\Z^*$. In precisely this context, Dugger \cite{dugger} has developed a theory\footnote{I believe that similar ideas are well-known to experts in the theory of symmetric monoidal categories, but Dugger's article is the most complete treatment I could find.} for showing that rings such as our $R$ above are commutative up to sign. A translation of his result into our context is the following.

\begin{prop}[From Proposition 1.2 of \cite{dugger}]
  \label{prop:dugger}
  Let $\cC$ be a symmetric monoidal category with unit object $\cO$, and let $X_1,\dots,X_n$ be a collection of invertible objects in $\cC$. For $a \in \Z^n$, define
  \begin{equation}
    \underline{X}^a = X_1^{\otimes a_1}\otimes \cdots \otimes X_n^{\otimes a_n},
  \end{equation}
  and define $R_a = \Hom_{\cC}(\cO, \underline{X}^a)$. Then
  \begin{enumerate}
  \item $R_* = \bigoplus_{a \in \Z^n} R_a$ is a $\Z^n$-graded ring,
  \item There exist elements $\tau_1,\dots,\tau_n$ in $\End(\cO)$ satisfying $\tau_i^2 = 1$ such that for $f \in R_a$ and $g \in R_b$,
    \begin{equation}
      fg = \left[\tau_1^{a_1b_1}\cdots \tau_n^{a_nb_n}\right]gf.
    \end{equation}
  \end{enumerate}
\end{prop}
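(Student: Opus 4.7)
The plan is to exhibit the multiplication on $R_*$ via a fixed family of coherent isomorphisms, then compute the commutator of $fg$ and $gf$ using naturality of the braiding, and finally identify the resulting endomorphism of $\cO$ by a combinatorial count of elementary braid crossings.

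First I would fix, for every pair $a,b \in \Z^n$, a canonical isomorphism $\mu_{a,b} : \underline{X}^a \otimes \underline{X}^b \to \underline{X}^{a+b}$ assembled from associators, unitors, and symmetry isomorphisms using a specific sorting convention (say, bubble-sorting the factors back into the canonical order $X_1, X_2, \dots, X_n$). MacLane's coherence theorem for symmetric monoidal categories guarantees that such a $\mu_{a,b}$ is well-defined up to coherent isomorphism, and that the induced multiplication $fg := \mu_{a,b} \circ (f \otimes g) \circ \lambda_\cO^{-1}$ is associative with unit $1_\cO \in R_0$, proving (1). To set up (2) I would define $\tau_i \in \End(\cO)$ by using invertibility of $X_i$ to identify $\End(X_i \otimes X_i) \cong \End(\cO)$ and taking the image of the self-braiding $t_{X_i, X_i}$. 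The symmetry axiom $t_{X_i, X_i}^2 = 1$ then yields $\tau_i^2 = 1$.

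Next, I would apply naturality of the braiding to $f \otimes g : \cO \otimes \cO \to \underline{X}^a \otimes \underline{X}^b$ to obtain
\begin{equation*}
    t_{\underline{X}^a, \underline{X}^b} \circ (f \otimes g) = (g \otimes f) \circ t_{\cO,\cO} = g \otimes f,
\end{equation*}
where the second equality uses the fact that the self-braiding of the unit object is trivial. Combining with the definitions of $fg$ and $gf$, one finds
\begin{equation*}
    gf = \bigl(\mu_{b,a} \circ t_{\underline{X}^a, \underline{X}^b} \circ \mu_{a,b}^{-1}\bigr) \circ fg,
\end{equation*}
and the parenthesized map is an automorphism of $\underline{X}^{a+b}$, which via invertibility corresponds to a unique element $\epsilon(a,b) \in \End(\cO)$. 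It remains to prove $\epsilon(a,b) = \tau_1^{a_1 b_1} \cdots \tau_n^{a_n b_n}$; using $\tau_i^2 = 1$, this then rearranges to the stated formula.

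The main obstacle is this last identification. I would attack it by repeated application of the hexagon axiom, decomposing $t_{\underline{X}^a, \underline{X}^b}$ and both $\mu_{\bullet,\bullet}$ into products of elementary braidings $t_{X_i, X_j}$. For $i \neq j$, the crossings contributed by $t_{\underline{X}^a, \underline{X}^b}$ (which transpose an $X_i$ in the $a$-block with an $X_j$ in the $b$-block) are canceled by matching reverse crossings that appear in $\mu_{b,a}$ or $\mu_{a,b}^{-1}$ during the re-sorting; this uses crucially the symmetric monoidal axiom $t_{Y,X} \circ t_{X,Y} = 1$. For $i = j$, the surviving crossings are self-braidings $t_{X_i, X_i}$, and a direct count shows that precisely $a_i b_i$ of them remain, one for each pair consisting of an $X_i$ from the $a$-block and an $X_i$ from the $b$-block. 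Each such self-braiding contributes a factor of $\tau_i$, yielding $\epsilon(a,b) = \prod_i \tau_i^{a_i b_i}$. The rigorous bookkeeping that shows the off-diagonal elementary braidings cancel is the technical core of the argument; abstractly it is a statement about braids labeled by a multiset of colors, and it is also what forces the symmetric (rather than merely braided) monoidal hypothesis.
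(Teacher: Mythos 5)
The paper does not prove this proposition at all; it simply imports it from Dugger's article, so there is no in-paper argument to compare against. Your sketch is essentially a reconstruction of Dugger's own proof --- fix coherent structure isomorphisms $\mu_{a,b}$, define $\tau_i$ as the image of the self-braiding $t_{X_i,X_i}$ under $\End(X_i\otimes X_i)\cong\End(\cO)$, and reduce the commutator $\mu_{b,a}\circ t_{\underline{X}^a,\underline{X}^b}\circ\mu_{a,b}^{-1}$ to a count of surviving diagonal crossings --- and it is correct in outline. The one place deserving more care than your sketch gives it is the case of negative exponents: $X_i^{\otimes a_i}$ for $a_i<0$ requires a chosen inverse together with a chosen isomorphism $X_i\otimes X_i^{-1}\cong\cO$, and both the associativity in part (1) and the well-definedness of $\epsilon(a,b)$ depend on making these choices coherently (note also that symmetric-monoidal coherence only identifies structure maps inducing the same permutation, which is why the bubble-sort convention must be fixed once and for all); this bookkeeping is where most of the actual work in Dugger's paper lies.
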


Thus commutativity holds up to the action of certain elements $\tau_i \in \End(\cO)$ attached to the invertible objects $X_i$. These elements are organized as a homomorphism $\tau : \Pic(\cC) \to \Aut(\cO)_2$, where $\Pic(\cC)$ is the group of isomorphism classes of invertible objects in $\cC$, and $\Aut(\cO)_2$ is the $2$-torsion subgroup of $\Aut(\cO)$. It is useful to note that this means that $\tau(X^{\otimes 2})= 1$ for any invertible object $X$. Thus, if we replace every object $X_i$ in the proposition above by $X_i^{\otimes 2}$, we find that commutativity holds in the strict sense.

It is interesting to remark that, in several places in the Floer theory literature where commutativity of a ring such as $R = \bigoplus_{k \geq 0} HF^0(\cO,L^{\otimes k})$ has been observed, one finds for geometric reasons that there is a basis of $R$ (consisting of intersection points of $\cO$ and $L^{\otimes k}$) such that, if $f$ and $g$ are basis elements, then $fg = \pm gf$, where the $\pm$ sign seems as if it could depend in an arbitrary way on the chosen basis elements $f$ and $g$, not just on their homogeneous degrees. One must then check these signs carefully. What Proposition \ref{prop:dugger} shows is that, when the Fukaya category carries a symmetric monoidal structure, these signs can only depend on the choice of the object $L$. Moreover, by replacing $L$ by $L^{\otimes 2}$, one can guarantee that the signs are all trivial. In a sense, this simply pushes the problem into checking that the coherences required of a symmetric monoidal category do indeed hold, but there is a good reason for that to be the case when $G$ is an abelian symplectic groupoid.

\subsection{Higher endomorphisms of the unit object}
\label{sec:higer-unit-object}

As mentioned above, an instance of the the homogeneous coordinate ring construction is the sum of morphisms from $\cO$ to all shifts of $\cO$, which we denote $\End^*(\cO)$:
\begin{equation}
  \End^*(\cO) = \bigoplus_{i\in \Z} \Hom(\cO,\cO[i]) = \bigoplus_{i\in \Z} HF^i(\cO,\cO) = HF^*(\cO,\cO)
\end{equation}
This is because tensoring with $\cO[1]$ is isomorphic to the shift functor, and so $\cO[i]\otimes \cO[j] \cong \cO[i+j]$. Just as $\End(\cO)$ is commutative even when the monoidal structure is not symmetric, one can show that $\End^*(\cO)$ is graded commutative in that case. This relies on the fact that left and right tensor product with $\cO[1]$ are both isomorphic to the shift functor, meaning that $\cO[1]$ can be made into an object of the Drinfeld center of the monoidal category.

We can also remark on the chain-level structure one expects on $CF^{*}(\cO,\cO)$, the cochain complex that computes $HF^{*}(\cO,\cO)$. Generally speaking, $CF^{*}(\cO,\cO)$ has the structure of an $A_{\infty}$-algebra, and we have seen that its cohomology is graded commutative. Furthermore, it should carry a bracket of degree $-1$ making it into an $E_{2}$-algebra. The construction of the bracket comes from attempting to lift the proof of commutativity (see section \ref{sec:unit-object} above) to the chain level. First, left the isomorphism $\Psi$ to a quasi-isomorphism $\psi : CF^{*}(\cO\otimes \cO,\cO\otimes \cO) \to CF^{*}(\cO,\cO)$. Define the four bilinear operations:
\begin{equation}
  m_{1}(a,b) = ab,\ m_{2}(a,b) = \psi(a\otimes b),\ m_{3}(a,b) = ba,\ m_{4}(a,b) = \psi(b\otimes a).
\end{equation}
The argument from section \ref{sec:unit-object} shows that all four of these operations induce the same operation on cohomology. By analyzing the argument further, one sees that it leads to a sequence of chain homotopies
\begin{equation}
  m_{1} \simeq m_{2} \simeq m_{3} \simeq m_{4} \simeq m_{1},
\end{equation}
meaning that there are degree $-1$ operators $P_{12},P_{23},P_{24},P_{41}$ such that $dP_{ij}+P_{ij}d = m_{j}-m_{i}$. The sum $P = P_{12}+P_{23}+P_{34}+P_{41}$ then satisfies $dP+Pd = 0$, so it is a chain map of degree $-1$. This $P$ is the bracket in the $E_{2}$-algebra structure.

\textcolor{highlight}{The full chain-level $E_{2}$-algebra structure contains many more operations than just $P$. Fortunately we have the following theorem stating that they are all determined up to coherent homotopy by the presentation of $\cO$ as the unit object in a monoidal $A_{\infty}$-category.
  \begin{thm}[Deligne conjecture, cf.\ \cite{benzvi-gunningham} Corollary 4.2]
  \label{thm:e2}
  Let $\cA$ be a monoidal $A_{\infty}$-category (= $\otimes$-monoid object in $\Ainfcat$), let $\cO$ be the unit object of $\cA$, and let $\cEnd^{*}(\cO)$ denote the endomorphism $A_{\infty}$-algebra of $\cO$. Then $\cEnd^{*}(\cO)$ carries the structure of an $E_{2}$-algebra.
\end{thm}
\begin{proof}
  The monoidal structure on $\cA$ and the isomorphism $u : \cO \otimes \cO \to \cO$ make $\cEnd^{*}(\cO)$ into a $\otimes$-monoid object in the category of $A_{\infty}$-algebras (a monoidal $A_{\infty}$-category with essentially one object). In other words, $\cEnd^{*}(\cO)$ is an $E_{1}$-algebra in the category of $E_{1}$-algebras. By Lurie's version of the Dunn additivity theorem\footnote{I learned this theorem from David Ben-Zvi.} \cite[Theorem 5.1.2.2]{HA}, it therefore carries the structure of an $E_{2}$-algebra.
\end{proof}
\begin{rmk}
  The original Deligne conjecture corresponds to the case where $A$ is an associative algebra, $\cA = \bimod{A}{A}$ and $\cO = A$ is the diagonal bimodule. There are many proofs of this conjecture available. The generalization of the Deligne conjecture to monoidal unit objects has also been studied by several authors, including Kock-To\"{e}n \cite{kock-toen} and Shoikhet \cite{shoikhet}.
\end{rmk}
\begin{rmk}
  Bottman has shown that there is a connection between $2$-associahedra and the 2-dimensional Fulton-MacPherson operad \cite{bottman-FM}. This suggests a way to obtain an $E_{2}$-algebra from an $(A_{\infty},2)$-algebra \cite[Section 1.5]{bottman-FM}.
\end{rmk}}

It is also interesting to note that, when the Poisson manifold $(M,\pi)$ underlying the symplectic groupoid $G$ is compact, the $A_{\infty}$-algebra $CF^{*}(\cO,\cO)$ is a deformation of the cochain algebra $C^{*}(M)$. Thus we find that the Floer theory of a symplectic groupoid integrating $(M,\pi)$ furnishes a deformation of $C^{*}(M)$ as an $E_{2}$-algebra. 

\subsection{Cases of commutativity}
\label{sec:commutativity-cases}

Now we consider several cases where commutativity can be explained by the preceding arguments.

\subsubsection{$G = T^*M$}
\label{sec:comm-cotangent}

The unit object the zero section $\cO = T^*_MM$. We have
\begin{equation}
  \End^*(\cO) \cong H^*(M)
\end{equation}
with the cup product, which is indeed graded commutative. \textcolor{highlight}{This calculation is due to Fukaya-Oh \cite{fukaya-oh}.}

\subsubsection{$M = \mathfrak{k}^*$, $G= T^*K$}
\label{sec:comm-lie-alg}

The unit object is the cotangent fiber at the identity $\cO = T^*_eK$. As an object of the wrapped Fukaya category
\begin{equation}
  \End^*(T^*_eK) \cong H_*(\Omega K)
\end{equation}
where the product is the Pontryagin product on the based loop space. \textcolor{highlight}{This product is commutative because $\Omega K \simeq \Omega^{2} BK$ has the homotopy type of the double loop space of the classifying space $BK$.} This fact, which generalizes the fact that the fundamental group of a topological group is abelian, is in some sense the source of the Eckmann-Hilton argument historically, \textcolor{highlight}{and the problem of recognizing double loop spaces led to the introduction of the $E_{2}$-operad \cite{may-loop}.}

\subsubsection{$G = \overline{M} \times M$}
\label{sec:comm-pairs}

The unit object is the diagonal $\Delta \subset \overline{M} \times M$. When $M$ is compact we have
\begin{equation}
  \End^*(\Delta) \cong QH^*(M)
\end{equation}
The right-hand side is commutative for operadic reasons (it is an algebra over the hypercommutative operad $H_*(\overline{\mathcal{M}}_{0,n+1})$ \textcolor{highlight}{\cite[Theorem III.1.5, Proposition IV.1.8.1]{manin}}), but the left-hand side, \textcolor{highlight}{seen merely as a Floer cohomology ring, is not manifestly commutative}. Now we see that it is the groupoid structure on $\overline{M} \times M$ that is responsible for the left-hand side being commutative.

Similarly, when $M$ is Weinstein, and we consider the wrapped Fukaya category, \textcolor{highlight}{the theory developed by Ganatra \cite{ganatra-thesis} shows that we have $\End^{*}(\Delta) \cong SH^*(M)$, and $SH^{*}(M)$ is known to be commutative because it carries the structure of a Batalin-Vilkovisky algebra.}

If we take a symplectic automorphism $\phi : M \to M$, there is a corresponding invertible object in $\cF(G)$, namely the graph $\Gamma(\phi)$ of $\phi$. Its $k$-th monoidal power is $\Gamma(\phi^k)$, and $HF(\Delta,\Gamma(\phi^k))$ is $HF(\phi^k)$, the fixed point Floer cohomology. It is possible \textcolor{highlight}{\cite{seidel-phi-squared}} to define a variety of bilinear operations on the space
\begin{equation}
  R = \bigoplus_{k \geq 0} HF(\phi^k),
\end{equation}
but in this case, the conditions of Proposition \ref{prop:dugger} are not satisfied, and these products are not necessarily commutative in any sense: for one thing, the monoidal structure is not symmetric. Even restricting to the monoidal subcategory generated by $\Gamma(\phi)$, one finds that the desired coherences cannot hold. This is related to the fact that $HF(\phi^k)$ carries an action of $\Z/k$, where the generator sends $x \mapsto \phi(x)$ for $x$ a fixed point of \textcolor{highlight}{$\phi^{k}$}. This action appears in the monoidal theory as the action of the conjugation functor $C \mapsto \Gamma(\phi) \otimes C \otimes \Gamma(\phi^{-1})$ on the space $HF(\Delta, \Gamma(\phi^k))$. The nontriviality of the conjugation functor (even on the monoidal subcategory generated by $\Gamma(\phi)$) measures the difference between left and right tensoring with $\Gamma(\phi)$, and the comparison of these two actions is crucial for the proof of Proposition \ref{prop:dugger}.

\subsubsection{Torus fibration}
\label{sec:comm-hms}

In the case $G = T^*M/\Lambda_\Z^*$, the unit object is the zero section. When $M$ is noncompact, one can obtain interesting rings as $\End(\cO)$. For instance, when $M = \R^n$, \textcolor{highlight}{we have $G \cong T^{*}T^{n}$ and $\cO \cong T^{*}_{p}T^{n}$. If we take the wrapped Fukaya category then $\End(\cO) \cong H_{0}(\Omega T^{n})$ by Abouzaid \cite{abouzaid-based-loops}. This is the ring of Laurent polynomials in $n$ variables, and it is commutative.}

The invertible objects are the ones supported on Lagrangian sections of the fibration $G \to M$. Taking such a section $L$, we can form the homogeneous coordinate ring
\begin{equation}
  R = \bigoplus_{k \geq 0} HF^0(\cO,L^{\otimes k})
\end{equation}
which is then commutative. As the name suggests, when the section $L$ corresponds to an ample line bundle, this ring is actually supposed to be the homogeneous coordinate ring of the mirror variety $X$ with respect to the projective embedding determined by $L$.

We expect that this analysis extends to the case of singular torus fibrations $X \to M$ by restricting to the groupoid part $X^{\gp}$.

\begin{example}
  As a specific application, consider the following problem posed to me by Paolo Ghiggini.  Let $T \subset S^3$ be the standard Legendrian embedding of the trefoil knot in the standard contact $S^3$. There is a differential graded algebra associated to any Legendrian link $\Lambda \subset S^3$, the Chekanov-Eliashberg algebra $LCA^*(\Lambda)$. In general, the underlying associative algebra of $LCA^*(\Lambda)$ is a free associative algebra generated by the Reeb chords of $\Lambda$, so it is non-commutative. On the other hand, for the case of $\Lambda = T$, the trefoil, the cohomology algebra $H^*(LCA^*(T),\partial)$ turns out to be commutative; to the author's knowledge this was first observed by Y.~Lekili and appears in the work of Ekholm-Lekili \cite[Section 6.1.5]{ekholm-lekili}.

  We can now explain this as follows. First, it is understood that the complex $(LCA^*(T),\partial)$ calculates the wrapped Floer cohomology $HW^*(L,L)$ of the cocore Lagrangian $L$ in the manifold $X$ obtained by attaching a Weinstein $2$-handle to the symplectic $B^4$ along $T \subset S^3 = \partial B^4$. Second, there is another presentation of the Weinstein manifold $X$, which is as the manifold obtained from the cotangent disc bundle $D^*T^2$ of the 2-torus by handle attachment along the conormal lifts of the $a$ and $b$ curves on $T^2$. Third, there is yet another presentation of $X$ which is as a singular Lagrangian torus fibration over $D^2$ with two singular fibers; each singular fiber is a nodal torus, and the vanishing cycles are the $a$ and $b$ curves from the previous description. In this last description, the cocore $L$ becomes a section of the Lagrangian torus fibration. If we set up the groupoid structure on $X^{\gp}$ so that this $L$ becomes the zero-section, then $L$ becomes the unit object in the monoidal structure on $\cF(X)$, then $H^*(LCA^*(T),\partial) = HW^*(L,L) = \End^*(L)$ is the endomorphism algebra of the unit object, and commutativity follows from the Eckmann-Hilton argument.

  Another remark is that, by regarding $L$ has Lagrangian section of the torus fibration and using a wrapping Hamiltonian pulled back from the base of the torus fibration, it is possible to construct a complex computing $HW^{*}(L,L)$ that is concentrated in degree zero, implying that $H^{*}(LCA^{*}(T),\partial)$ is concentrated in degree zero as well. This is a non-trivial observation since $LCA^{*}(T)$ has many elements of non-zero degree.
\end{example}

\subsubsection{Symplectic fibration} Let $G = G_\mu$ be the symplectic groupoid associated to $\mu \in \Symp(F,\omega_F)$, \textcolor{highlight}{where $F$ is compact.} Recall that $G_\mu$ is an $\overline{F}\times F$ fibration over $T^*S^1$. The unit object $\cO$ is the fibration over $S^1 \subset T^*S^1$ whose fiber is the diagonal $\Delta_F \subset \overline{F}\times F$.

\textcolor{highlight}{The classical cohomology of $\cO$ may be computed using the Serre spectral sequence for the fibration $F \to \cO \to S^{1}$,
  \begin{equation}
    \label{eq:serre}
  H^{*}(\cO) = \ker(\mu^{*}-I) \oplus \coker(\mu^{*}- I)[-1],
\end{equation}
where $\mu^{*} : H^{*}(F) \to H^{*}(F)$ is the action of monodromy of the fibration on the cohomology of the fiber. Since $\mu \in \Symp(F,\omega_{F})$, $\mu^{*}$ is an automorphism of the quantum cohomology algebra $QH^{*}(F)$. Then $QH^{*}(F)^{\mu} := \ker(\mu^{*}-I)$ is a subalgebra, and $QH^{*}(F)_{\mu} := \coker(\mu^{*}-I)$ is a module for $QH^{*}(F)^{\mu}$, so it is natural to expect that \eqref{eq:serre} holds at the quantum level, giving
\begin{equation}
  HF^{*}(\cO,\cO) = QH^{*}(F)^{\mu} \oplus QH^{*}(F)_{\mu}[-1].
\end{equation}
}
The point we wish to make is that, even without calculating what this ring is, we know \emph{a priori} that it is commutative, simply because it is the unit object in a symplectic groupoid.

\appendix
\section{\textcolor{highlight}{Monoidal categories and monoids in higher category theory}}

In this section we will review several notions of monoidal category and monoid object, both in ordinary category theory, and in the theory of higher categories. Most of this material may be found in standard references on higher category theory, and we may not always cite the original references. Our presentation follows Etingof et al. \cite{tensor-cat}, Lurie \cite{DAG-II,goodwillie-I,HA}, and Gaitsgory-Rozenblyum \cite{GR-I}. The websites nLab (\texttt{ncatlab.org}) and Kerodon (\texttt{kerodon.net}) were also consulted.

The symplectically inclined reader will notice the absence of operads in our discussion. Instead we use the simplex category and its variants to formulate associativity and unitality properties. This is the primary method used in the literature on higher categories.

The goal of this Appendix is to arrive at the definition of monoid object that we use in the paper: see Definition \ref{defn:noncart-oo-monoid} and its reinterpretation in Proposition \ref{prop:noncart-oo-monoid}. This definition is a higher-categorical variation of a notion of homotopy monoid object introduced by Leinster \cite{leinster}.

\subsection{The simplex category and simplicial objects}
\label{sec:simplex-cat}

The \emph{simplex category} $\bDelta$ is the category whose objects are the finite nonempty ordinals $[n] = \{0 < 1 < \cdots < n\}$ and whose morphisms are monotonic functions. \textcolor{secondrevision}{In some situations, we shall use the alternative notation $\mathbf{n} = [n-1]$. The \emph{augmented simplex category} $\bDelta_{a}$ also includes the empty set $\mathbf{0} = [-1] = \emptyset$. (In von Neumann's notation for the ordinals, the ordinal $n$ is the object that we write as $\mathbf{n} = [n-1]$. The notation $\mathbf{n}$ emphasizes the cardinality of the set, while the notation $[n]$ emphasizes the dimension of the simplex with this vertex set.)}

The category $\bDelta$ may be presented in terms of generators and relations. Let $\delta^{i} : [n-1] \to [n]$ be the unique monotonic function whose image does not contain $i$, and let $\sigma^{i} : [n+1] \to [n]$ be the unique monotonic function such that $i \in [n]$ has two distinct preimages; these functions are defined for each pair $(n,i)$ such that $0 \leq i \leq n$. The maps $\delta^{i}$ and $\sigma^{i}$ generate the category $\bDelta$, subject to the following relations \cite[\S 1.1.1, taking the opposite]{kerodon}:
\begin{enumerate}
\item For $n \geq 2$ and $0 \leq i < j \leq n$, the relation $\delta^{j} \circ \delta^{i} = \delta^{i} \circ \delta^{j-1}$,
\item For $0 \leq i \leq j \leq n$, the relation $\sigma^{j}\circ \sigma^{i} = \sigma^{i} \circ \sigma^{j+1}$,
\item For $0 \leq i,j \leq n$, the relation
  \begin{equation*}
    \sigma^{j}\circ \delta^{j} = 
    \begin{cases}
      \delta^{i}\circ \sigma^{j-1}, & \text{if $i < j$},\\
      \id, & \text{if $i = j$ or $i = j+1$},\\
      \delta^{i-1}\circ \sigma^{j}, & \text{if $i > j+1$}.
    \end{cases}
  \end{equation*}
\end{enumerate}

We shall also consider the opposite category $\bDelta^{\op}$. We use the notations $d_{i}: [n] \to [n-1]$ and $s_{i} : [n] \to [n+1]$, for $\delta^{i}$ and $\sigma^{i}$ regarded as morphisms in the opposite category. The category $\bDelta^{\op}$ is therefore generated by $d_{i}$ and $s_{i}$ subject to the opposites of the relations written above. The map $d_{i}$ is called a \emph{face map}, $s_{i}$ is called a \emph{degeneracy map}, $\delta^{i}$ is called a \emph{coface map}, and $\sigma^{i}$ is called a \emph{codegeneracy map}.

For any pair $\{i,i+1\} \subset [n]$ of consecutive elements, there is a unique monotonic function $\epsilon_{i,i+1} : [1] \to [n]$ whose image is $\{i,i+1\}$. The opposite $e_{i,i+1} : [n] \to [1]$ is a morphism in $\bDelta^{\op}$ that we call a \emph{principal edge (of the $n$-simplex)}.

A useful variation is the category of \emph{intervals}. This is the subcategory $\bDeltaint \subset \bDelta$ with the same objects, but whose morphisms are required to take the minimum element to the minimum element, and the maximum element to the maximum element. A presentation for $\bDeltaint$ is obtained from the presentation of $\bDelta$ by throwing away the first and last coface maps $\delta^{0},\delta^{n} : [n-1] \to [n]$. There is a duality between face and degeneracy maps that is evident from the relations, and which is expressed by the following isomorphism of categories.\footnote{I learned this proposition from Ezra Getzler.}
\begin{prop}[\cite{nlab} article ``simplex category'']
  \label{prop:simplex-dual}
There is a functor $F : \bDelta_{a} \to \bDeltaint^{\op}$ defined on objects and generating morphisms by
\begin{equation*}
  F([n]) = [n+1], \quad F(\delta^{i}) = s_{i},\quad F(\sigma^{i}) = d_{i+1},
\end{equation*}
and $F$ is an isomorphism of categories $\bDelta_{a } \cong \bDeltaint^{\op}$ between the augmented simplex category and the opposite category of intervals.
\end{prop}

Let $\calC$ be an ordinary category. A \emph{simplicial object} in $\calC$ is a functor $\bDelta^{\op} \to \calC$, a \emph{cosimplicial object} is a functor $\bDelta \to \calC$, an \emph{augmented simplicial object} is a functor $\bDelta_{a}^{\op}\to \calC$, and an \emph{augmented cosimplicial object} is a functor $\bDelta_{a} \to \calC$. According to Proposition \ref{prop:simplex-dual}, a functor $\bDeltaint^{\op} \to \calC$ is essentially the same thing as an augmented cosimplicial object, and a functor $\bDeltaint \to \calC$ is essentially an augmented simplicial object. In particular, any simplicial object gives rise to an augmented cosimplicial object by restricting along the embedding $\bDelta_{a} \cong \bDeltaint^{\op} \to \bDelta^{\op}$.

\subsection{Nerves of monoids and categories}
\label{sec:ordinary-nerves}

A fundamental observation in algebra and its higher-categorical generalizations is that the opposite simplex category $\bDelta^{\op}$ may be used to formulate associativity and unitality axioms. Because the concepts ``monoid'' and ``category'' are defined by such axioms, they may be recast in the form of simplicial objects satisfying certain extra conditions.

\begin{rmk}
  The use of $\bDelta^{\op}$ to formulate associativity axioms is different from the approach using the Stasheff associahedra that is common in work on Floer theory. The latter approach has the benefit that it can be generalized to other operads, so that one can formulate other kinds of algebraic axioms, such as commutative laws and Jacobi identities. On the other hand, the approach using $\bDelta^{\op}$ involves much simpler combinatorics and is fundamental to most of the literature on higher categories.
\end{rmk}

We begin by recalling that a monoid (in the category of sets) consists of a set $M$, a map $\circ : M \times M \to M$, and an element $e \in M$ such that $a\circ (b\circ c) = (a\circ b)\circ c$ and $e\circ a = a = a\circ e$ for all $a,b,c \in M$. From these data we shall construct a simplicial set $BM : \bDelta^{\op} \to \Set$, known as \emph{classifying space of $M$} \cite[Example 1.2.11]{DAG-II}.

Concretely, $BM([n]) = M^{n}$ is the $n$-fold Cartesian product of the set $M$, the degeneracy maps $s_{i} : M^{n-1} \to M^{n}$ insert $e$ in the $i$-th factor, the first face map $d_{0} : M^{n} \to M^{n-1}$ forgets the first factor, the last face map $d_{n} : M^{n} \to M^{n-1}$ forgets the last factor, and the other face maps $d_{i} : M^{n} \to M^{n-1}$ multiply two consecutive elements using $\circ$. The fact that these maps satisfy the relations in $\bDelta^{\op}$ encodes the associativity and unital axioms of $M$. For instance the relation
\begin{equation*}
  d_{1} \circ d_{1} = d_{1}\circ d_{2} : M^{3} \to M
\end{equation*}
is the associative law.

Next, observe that the principal edges define maps $e_{i,i+1}: BM([n]) \to BM([1])$, and taking the product of these maps over all $i$ gives a map
\begin{equation*}
  \prod_{i=0}^{n-1} e_{i,i+1} : BM([n]) \to BM([1])^{n}
\end{equation*}
and this map is nothing but the identity map $M^{n} \to M^{n}$.

\begin{prop}[\cite{DAG-II} Example 1.2.11]
  \label{prop:bmonoid}
  Let $X : \bDelta^{\op} \to \Set$ be a simplicial set, and suppose that, for every $n \geq 0$, the map induced by the principal edges
  \begin{equation*}
    \prod_{i=0}^{n-1} e_{i,i+1} : X([n]) \to X([1])^{n}
  \end{equation*}
  is bijective. Then there is a monoid $(M,\circ,e)$, unique up to isomorphism, such that $X$ is isomorphic to $BM$.
\end{prop}

The key is to see how the multiplication and the unit are recovered from the simplicial structure. The hypothesis of the proposition implies that $X([0])$ is a one-point set, and the unit is image of this point under the degeneracy map $s_{0} : X([0]) \to X([1])$. There is also a bijection $X([2]) \to X([1])^{2}$; taking the inverse of this bijection followed by $d_{1} : X([2]) \to X([1])$ recovers the multiplication.

Now we pass from monoids to (small) categories. Let us keep in mind that a category is a ``monoid with many objects.'' Let $\calC$ be a small category with object set $C$. We introduce another variant of $\bDelta$, the \emph{$C$-colored simplex category} $\bDelta_{C}$. An object of $\bDelta_{C}$ consists of a pair $([n],f : [n] \to C)$ whose first coordinate is an object of $\bDelta$ and whose second coordinate is a coloring of the elements of $[n]$ by elements of $C$. The morphisms in $\bDelta_{C}$ are monotonic functions that are compatible with the colorings in the sense that $p : [n] \to [m]$ gives rise to a morphism from $([n], f \circ p: [n] \to C)$ to $([m],f : [m] \to C)$. We write $[c_{0},c_{1},\dots,c_{n}]$ for the object $([n],f)$ where $f(i) = c_{i}$. The idea is that $\bDelta_{C}^{\op}$ controls the associativity and unitality of any category whose object set is identified with $C$. 

Recall that $\calC$ is a category with object set $C$. Define the \emph{$C$-colored nerve} of $\calC$ to be the functor $N_{C}(\calC) : \bDelta_{C}^{\op}\to \Set$ such that
\begin{itemize}
\item $N_C(\calC)([c])$ is a final object (a one-point set that we could take to be $\{\emptyset\}$ for specificity).
\item $N_{C}(\calC)([c_{0},c_{1},\dots,c_{n}])$ is the set of sequences of composable morphisms $c_{0}\to c_{1}\to \cdots \to c_{n}$; in particular $N_{C}(\calC)([c_{0},c_{1}]) = \Hom_{\calC}(c_{0},c_{1})$.
\item The degeneracy maps $s_{i}: [c_{0},\dots, c_{i}, \dots, c_{n}] \to [c_{0},\dots, c_{i},c_{i}, \dots, c_{n}]$ have the effect of inserting the identity morphism in the $i$-th spot.
\item For $0 < i < n$, The face maps $d_{i} : [c_{0},\dots,c_{i-1},c_{i},c_{i+1},\dots,c_{n}] \to [c_{0},\dots,c_{i-1},c_{i+1},\dots,c_{n}]$ are given by composition of morphisms in $\calC$ at the object $c_{i}$. The face maps $d_{0}$ and $d_{n}$ are projections.
\end{itemize}

There is an analogue of Proposition \ref{prop:bmonoid} in this context:
\begin{prop}[cf. \cite{goodwillie-I} Section 2]
  \label{prop:cnerve}
  Let $C$ be a set and let $X : \bDelta_{C}^{\op} \to \Set$ be a functor such that
  \begin{enumerate}
  \item for each $c$, $X([c])$ is a one-point set,
  \item for each $[c_{0},\dots,c_{n}]$, the map induced by the principal edges in $\bDelta_{C}^{\op}$
  \begin{equation*}
    X([c_{0},c_{2},\dots,c_{n}]) \to X([c_{0},c_{1}])\times X([c_{1},c_{2}]) \times \cdots \times X([c_{n-1},c_{n}])
  \end{equation*}
  is bijective.
  \end{enumerate}
  Then there is a category $\calC$ with object set $C$, unique up to isomorphism, such that $X$ is isomorphic to $N_{C}(\calC)$.
\end{prop}

We must also mention the \emph{(ordinary) nerve} $N(\calC)$ of $\calC$, which is given by the same construction but forgetting the colorings. Then $N(\calC)([0]) = C$ is the set of objects, and $N(\calC)([n])$ is the set of all composable sequences of morphisms between any pair of objects. The purpose of using the $C$-colored version is that it lends itself slightly better to certain higher-categorical generalizations (the notion of Segal categories to be discussed below).

An obvious but important property of the nerve is that it may be used to turn any ordinary category into an $\infty$-category; for instance $N(\bDelta^{\op})$ is an $\infty$-category that plays the role of $\bDelta^{\op}$ in $\infty$-category theory. Thus a simplicial object in an $\infty$-category $\calC$ is a map $X : N(\bDelta^{\op}) \to \calC$.

\subsection{Ordinary monoidal categories and monoid objects in them}
\label{sec:monoids}

Proposition \ref{prop:bmonoid} gives us a way to define a monoid object in any category.
\begin{defn}
  \label{defn:cartmonoid}
  Let $\calC$ be a category that has finite products. A \emph{monoid object} of $\calC$ is a simplicial object $X : \bDelta^{\op} \to \calC$ such that, for every $n \geq 0$, the map induced by the principal edges
  \begin{equation*}
    \prod_{i=0}^{n-1} e_{i,i+1} : X([n]) \to X([1])^{n}
  \end{equation*}
  is an isomorphism in $\calC$ between the object $X([n])$ and the $n$-fold Cartesian product of the object $X([1])$.
\end{defn}

Given a monoid object in the sense just defined, we get an associative multiplication $X([1])^{2} \to X([1])$ by taking the inverse of the ismorphism $X([2])\to X([1])^{2}$ followed by $d_{1} : X([2]) \to X([1])$. We get a unit by taking the degeneracy $s_{0} : X([0]) \to X([1])$, noting that $X([0])$ is necessarily a terminal object according to the definition.

We now observe that the definition of monoid object just given is restrictive in a specific sense, namely that it makes reference to the categorical Cartesian product. The Cartesian product is an instance of a monoidal structure on $\calC$, and we may wish to consider other monoidal structures. For instance, consider the category $\VectK$ of ordinary (not DG) vector spaces over a field $\K$. We would like to say that an associative $\K$-algebra $A$ is the same thing as a monoid object in $\VectK$. But the multiplication map $A \times A \to A$ is \emph{not linear but bilinear}, and so it is not a morphism in $\VectK$. What we must do instead is consider the multiplication as a map $A\otimes_{\K} A \to A$, and this is a morphism in $\VectK$. Thus we must consider $\otimes_{\K}$ as the ambient monoidal structure on $\VectK$ in order for this definition to make sense. 

We now recall the definition of a monoidal structure on an ordinary category.
\begin{defn}[\cite{tensor-cat} Definition 2.1.1]
  \label{defn:ordinarymonoidalcat}
  A \emph{monoidal category} is a tuple $(\calC, \otimes, a, 1_{\calC}, \iota)$, where $\calC$ is a category, $\otimes : \calC \times \calC \to \calC$ is a bifunctor, $1_{\calC}$ is an object of $\calC$, $a$ is a natural isomorphism of functors $\calC^{3} \to \calC$,
  \begin{equation*}
    a_{X,Y,Z} : (X \otimes Y) \otimes Z \to X \otimes (Y \otimes Z),
  \end{equation*}
  called the \emph{associativity constraint},
  and $\iota : 1_{\calC}\otimes 1_{\calC} \to 1_{\calC}$ is an isomorphism. These data are required to satisfy the unit axiom that $X \mapsto 1_{\calC} \otimes X$ and $X \mapsto X \otimes 1_{\calC}$ are autoequivalences of $\calC$, and also Mac Lane's pentagon axiom.
\end{defn}

The associativity constraint $a_{X,Y,Z}$ witnesses that the monoidal operation is associative up to isomorphism, and \emph{Mac Lane's pentagon axiom} is the statement that this associativity is coherent when considering four objects. The axiom may be written as an equation as
\begin{equation}
  \label{eq:maclane}
  a_{W,X,Y\otimes Z}\circ a_{W\otimes X,Y,Z} = (\id_{W}\otimes a_{X,Y,Z})\otimes a_{W,X\otimes Y,Z}\circ (a_{W,X,Y}\otimes \id_{Z})
\end{equation}
where these morphisms relate the five ways of inserting parentheses into the expression $W\otimes X \otimes Y \otimes Z$. 

\begin{rmk}
  The reader who is familiar with $A_{\infty}$-algebras will note that the definition of a monoidal category has some of the same flavor. The associativity constraint $a$ corresponds to the operation $m_{3}$, and the pentagon appearing in the pentagon axiom is an instance of a Stasheff associahedron.
\end{rmk}

We shall also need the concept of a monoidal functor.
\begin{defn}[\cite{tensor-cat} Definition 2.4.1]
  Let $(\calC_1,\otimes_{1})$ and $(\calC_{2},\otimes_{2})$ be monoidal categories. A \emph{monoidal functor} consists of a functor $F : \calC_{1}\to \calC_{2}$ and a natural isomorphism of functors $\calC_{1}\times \calC_{1} \to \calC_{2}$,
  \begin{equation*}
    J_{X,Y} : F(X) \otimes_{2} F(Y) \to F(X\otimes_{1}Y).
  \end{equation*}
  \textcolor{secondrevision}{
    The natural isomorphism $J$ must satisfy a coherence condition with respect to the six different functors $\calC_{1}^{3} \to \calC_{2}$ that may be constructed from $\otimes_{1}$,$\otimes_{2}$, and $F$: this condition is the commutativity of the diagram
    \begin{equation}
      \label{eq:monoidal-functor-coherence}
      \xymatrix{
        (F(X)\otimes_{2} F(Y))\otimes_{2} F(Z) \ar[rr]^{a_{F(X),F(Y),F(Z)}} \ar[d]_-{J_{X,Y}\otimes 1_{F(Z)}} & & F(X) \otimes_{2} (F(Y) \otimes_{2} F(Z)) \ar[d]^-{1_{F(X)}\otimes J_{Y,Z}}\\
        F(X\otimes_{1}Y) \otimes_{2} F(Z) \ar[d]_-{J_{X\otimes_{1} Y,Z}} & & F(X) \otimes_{2} F(Y\otimes_{1}Z) \ar[d]^-{J_{X,Y\otimes_{1}Z}}\\
        F((X \otimes_{1} Y)\otimes_{1} Z) \ar[rr]^{F(a_{X,Y,Z})}&& F(X \otimes_{1} (Y\otimes_{1}Z))
        }
    \end{equation}
    We also require that $F(1_{\calC_{1}})$ is isomorphic to $1_{\calC_{2}}$.}
\end{defn}

Returning to the subject of monoid objects, we remark on an extremely basic fact: \emph{Definition \ref{defn:cartmonoid} does not make sense in a general monoidal category}. The culprit are the first and last face maps $d_{0}, d_{n} : X([n]) \to X([n-1])$, which are meant to be projections. In a monoidal category, where the monoidal structure has nothing to do with Cartesian product, these maps usually cannot be defined. For instance, in the monoidal category $(\VectK,\otimes_{\K})$, there is no such thing as a projection operator $V \otimes_{\K} W \to V$.

The solution to this problem that we shall follow is to modify Definition \ref{defn:cartmonoid} by throwing away the first and last face maps. This has the effect of taking a simplicial object $X : \bDelta^{\op} \to \calC$, and restricting to the category $\bDeltaint^{\op} \subset \bDelta^{\op}$ of intervals (see Section \ref{sec:simplex-cat}). Because $\bDeltaint^{\op} \cong \bDelta_{a}$, we obtain an augmented cosimplicial object $X' : \bDelta_{a} \to \calC$. Now we have a new problem, since this process of throwing away face maps destroys the principal edge maps that are at the heart of Definition \ref{defn:cartmonoid}.

The solution to this new problem is to notice that $\bDelta_{a}$ is \emph{itself a monoidal category}. \textcolor{secondrevision}{In this definition, recall that $\mathbf{n} = [n-1]$ is a set of cardinality $n$, and that $\bDelta_{a}$ contains $\mathbf{0} = [-1] = \emptyset$.}
\begin{prop}[\cite{nlab}, article ``simplex category'']
  \label{prop:simplexmonoidal}
  Define a functor $\oplus : \bDelta_{a} \times \bDelta_{a} \to \bDelta_{a}$ as follows. \textcolor{secondrevision}{Set $\mathbf{n} \oplus \mathbf{m} = \mathbf{n} + \mathbf{m}$, the ordinary sum of ordinals; thus $[n] \oplus [m] = [n+m+1]$.} For $f : [n] \to [k]$ and $g: [m]\to [\ell]$, set $f\oplus g$ to be the function given by the formula
  \begin{equation*}
    (f\oplus g)(i) = \begin{cases} 
      f(i), &\text{if $0 \leq i \leq n$}, \\
      g(i - (n+ 1)) + k+1, &\text{if $n+1 \leq i \leq n+m+1$}.
    \end{cases}
  \end{equation*}
  \textcolor{secondrevision}{Let $\mathbf{0} = [-1]$ be the unit object}, and let $a$ and $\iota$ be identity morphisms. Then these structures define a monoidal structure on $\bDelta_{a}$ (which is strictly associative).
\end{prop}

\begin{defn}
  \label{defn:noncartmonoid}
  Let $(\calC,\otimes)$ be a monoidal category. A \emph{monoid object} in $\calC$ is a monoidal functor $X : (\bDelta_{a},\oplus) \to (\calC,\otimes)$.
\end{defn}
\textcolor{secondrevision}{This definition is motivated a result of Mac Lane \cite[VII.5, Proposition 1]{maclane}. A monoidal category is called \emph{strict} if the associativity isomorphisms $\alpha_{X,Y,Z}$ and the unitality isomorphism $\iota$ are all identities. Then \cite[VII.5, Proposition 1]{maclane} states that, when $(\calC,\otimes)$ is a strict monoidal category, there is a bijection between monoidal functors $(\bDelta_{a},\oplus) \to (\calC,\otimes)$ and monoid objects in a more standard sense (presented below). In other words, $(\bDelta_{a},\mathbf{1})$ is the universal ``monoidal category equipped with a monoid object.'' When $(\calC,\otimes)$ is not strict, there is no longer such a bijection, but since every monoidal category is monoidally equivalent to a strict one (the ``Mac Lane strictness theorem'' \cite[Theorem 2.8.5]{tensor-cat}), Definition \ref{defn:noncartmonoid} has essentially the same generality as the standard definition.}

Observe that, in Definition \ref{defn:noncartmonoid}, a monoid object is an instance of an augmented cosimplicial object. Let us remark a bit more on the structure of such a monoid object. \textcolor{secondrevision}{In this definition, $\Xbf{0} \cong 1_{\calC}$, and the ``underlying object'' of the monoid object is $\Xbf{1}$. Because $X$ is a monoidal functor and $\mathbf{1}^{\oplus n} = \mathbf{n}$ (cardinalities add), we have $\Xbf{n} = X(\mathbf{1} \oplus \cdots \oplus \mathbf{1}) \cong \Xbf{1}^{\otimes n}$. The unit in $\Xbf{1}$ is induced by the unique coface map $\delta^{0}: \mathbf{0} \to \mathbf{1}$, and the product on $\Xbf{1}$ is induced by the unique codegeneracy map $\sigma^{0}: \mathbf{2}\to \mathbf{1}$.}

\begin{rmk}
  At this point, we could attempt to formulate the idea that a monoidal category is the same thing as a monoid object in $\Cat$, the category of categories. This is actually already a higher-categorical notion so we will defer this until later.
\end{rmk}

\textcolor{secondrevision}{Definition \ref{defn:noncartmonoid} is not the ``standard'' definition of a monoid object in a monoidal category, but it is the one whose generalization to the $(\infty,1)$-categorical setting we shall use. For added motivation, we shall now recall the standard definition and show how a monoid object in the sense of Definition \ref{defn:noncartmonoid} gives rise to one in the standard sense.
}

\textcolor{secondrevision}{The standard definition of a monoid object in a monoidal category $(\calC,\otimes)$ is the direct abstraction of the definition of a ordinary monoid in the category of sets. Thus it is an object $A$ of $\calC$, together with morphisms $m : A \otimes A \to A$ and $e: 1_{\calC}\to A$, and these are required to satisfy the associative law and the left and and right unit axioms, expressed by the following commutative diagrams.
  \begin{equation}
    \xymatrix{
      (A \otimes A) \otimes A \ar[d]_{m\otimes 1_{A}} \ar[r]^{a_{A,A,A}}& A \otimes (A \otimes A)\ar[d]^{1_{A}\otimes m}\\
      A \otimes A \ar[d]_{m} & A \otimes A \ar[d]^{m}\\
      A \ar@{=}[r]& A
    }\qquad
    \xymatrix{
      A \ar[r]^-{\cong} \ar@{=}[ddr]& 1_{\calC} \otimes A \ar[d]^{e \otimes 1_{A}}\\
      & A \otimes A \ar[d]^{m}\\
      & A 
    }\qquad
    \xymatrix{
      A \ar[r]^-{\cong} \ar@{=}[ddr]& A \otimes 1_{\calC} \ar[d]^{1_{A} \otimes e}\\
      & A \otimes A \ar[d]^{m}\\
      & A 
    }
  \end{equation}
}

\textcolor{secondrevision}{We may think of Definition \ref{defn:noncartmonoid} as a (highly redundant) way of encoding the same data and conditions. Let $X : (\bDelta_{a},\oplus) \to (\calC,\otimes)$ be a monoidal functor. We shall show that the object $A = \Xbf{1}$ carries the structure of a monoid object in the standard sense.}

\textcolor{secondrevision}{First we construct the product on $\Xbf{1}$; it is a map $m : \Xbf{1} \otimes \Xbf{1} \to \Xbf{1}$. Part of the data of a monoidal functor is an isomorphism $J_{\mathbf{1},\mathbf{1}} : \Xbf{1}\otimes \Xbf{1} \to X(\mathbf{1} \oplus \mathbf{1}) = \Xbf{2}$. Because $X$ is a functor, the morphism $\sigma^{0} : \mathbf{2}\to \mathbf{1}$ induces $X(\sigma^{0}) : \Xbf{2} \to \Xbf{1}$. We define $m$ as the composition $m = X(\sigma^{0}) \circ J_{\mathbf{1},\mathbf{1}}$.
  \begin{equation}
    \xymatrix{
      m : \Xbf{1} \otimes \Xbf{1} \ar[r]^-{J_{\mathbf{1},\mathbf{1}}} & \Xbf{2} \ar[r]^{X(\sigma^{0})} & \Xbf{1}.
    }
  \end{equation}
}

\textcolor{secondrevision}{The structure of the proof that $m$ so defined satisfies the associative law involves composing several commutative diagrams that have three sources: functoriality of $X$, the naturality of the transformation $J$, and also the coherence condition for $J$ that is involved in the definition of a monoidal functor. It is possible to display the entire argument as a single commutative diagram:}

\textcolor{secondrevision}{
\begin{equation}
  \label{eq:big-diagram}
  {\footnotesize
  \xymatrix{
    (\Xbf{1} \otimes \Xbf{1}) \otimes \Xbf{1} \ar@{=}[r] \ar[d]_-{J_{\mathbf{1},\mathbf{1}}\otimes 1_{\Xbf{1}}} & (\Xbf{1} \otimes \Xbf{1}) \otimes \Xbf{1}\ar[d]_-{J_{\mathbf{1},\mathbf{1}}\otimes 1_{\Xbf{1}}} \ar[r]^-{a}& \Xbf{1} \otimes (\Xbf{1} \otimes \Xbf{1}) \ar[d]^-{1_{\Xbf{1}} \otimes J_{\mathbf{1},\mathbf{1}}} \ar@{=}[r] &\Xbf{1} \otimes (\Xbf{1} \otimes \Xbf{1}) \ar[d]^-{1_{\Xbf{1}} \otimes J_{\mathbf{1},\mathbf{1}}} \\
    \Xbf{2} \otimes \Xbf{1} \ar@{=}[r] \ar[d]_-{X(\sigma^{0})\otimes 1_{\Xbf{1}}}& \Xbf{2}\otimes \Xbf{1} \ar[d]_-{J_{\mathbf{2},\mathbf{1}}} & \Xbf{1}\otimes \Xbf{2} \ar@{=}[r]\ar[d]^-{J_{\mathbf{1},\mathbf{2}}} & \Xbf{1}\otimes \Xbf{2} \ar[d]^-{1_{\Xbf{1}} \otimes X(\sigma^{0})}\\
    \Xbf{1} \otimes \Xbf{1} \ar[d]_-{J_{\mathbf{1},\mathbf{1}}} & \Xbf{3} \ar[d]_-{X(\sigma^{0})} \ar@{=}[r]& \Xbf{3} \ar[d]^-{X(\sigma^{1})} & \Xbf{1} \otimes \Xbf{1} \ar[d]^-{J_{\mathbf{1},\mathbf{1}}}\\
    \Xbf{2} \ar@{=}[r] \ar[d]_-{X(\sigma^{0})} & \Xbf{2} \ar[d]_-{X(\sigma^{0})} & \Xbf{2} \ar@{=}[r] \ar[d]^-{X(\sigma^{0})} &  \Xbf{2} \ar[d]^-{X(\sigma^{0})}\\
    \Xbf{1} \ar@{=}[r] & \Xbf{1} \ar@{=}[r]  & \Xbf{1} \ar@{=}[r]& \Xbf{1}
  }}
\end{equation}
The proof consists of the following comments on diagram \eqref{eq:big-diagram}:
\begin{itemize}
\item The arrow in the top row is $a = a_{\Xbf{1},\Xbf{1},\Xbf{1}}$, the associativity constraint.
\item The left-most column in the diagram forms the composition $m \circ (m \otimes 1_{\Xbf{1}})$, and the right-most column forms the composition $m \circ (1_{\Xbf{1}}\otimes m)$. Thus the commutativity of \eqref{eq:big-diagram} implies the associative law for $m$.
\item Consider the sub-diagram formed by the two left-most columns of \eqref{eq:big-diagram}. It suffices to check that the middle rectangle whose vertices are $\Xbf{2}\otimes \Xbf{1}$ and $\Xbf{2}$ commutes. This follows from the fact that $J$ is a natural transformation of functors $\bDelta_{a} \times \bDelta_{a} \to \calC$. Namely, consider the morphism $(\sigma^{0},1_{\mathbf{1}}) : (\mathbf{2},\mathbf{1}) \to (\mathbf{1},\mathbf{1})$ in $\bDelta_{a}\times \bDelta_{a}$. Applying the functor $\Xbf{-} \otimes \Xbf{-}$ yields $X(\sigma^{0})\otimes 1_{\Xbf{1}} : \Xbf{2}\otimes \Xbf{1} \to \Xbf{1}\otimes \Xbf{1}$, while applying the functor $\Xbf{-\oplus -}$ yields $X(\sigma^{0}) : \Xbf{3} \to \Xbf{2}$, and these are the maps appearing in the rectangle under consideration. The equation $J_{\mathbf{1},\mathbf{1}} \circ (X(\sigma^{0})\otimes 1_{\Xbf{1}}) = X(\sigma^{0}) \circ J_{\mathbf{2},\mathbf{1}}$ is an instance of the naturality condition for $J$.
\item Consider the sub-diagram formed by the two right-most columns of \eqref{eq:big-diagram}. This diagram commutes by naturality of $J$, with essentially the same argument as above.
\item Consider the sub-diagram formed by the two middle columns of \eqref{eq:big-diagram}. The equality between the two copies of $\Xbf{3}$ divides this sub-diagram into upper and lower rectangles whose commutativity is proved separately.
  \begin{itemize}
  \item For the upper rectangle, observe that it consists entirely of components of the natural transformation $J$ and the associativity constraint $a$. This rectangle is nothing but an instance of the coherence condition \eqref{eq:monoidal-functor-coherence} appearing in the definition of a monoidal functor, so it commutes by hypothesis.
  \item The commutativity of the lower rectangle follows from the functoriality of $X$. In the category $\bDelta_{a}$, the two sequences of arrows
    \begin{equation}
      \xymatrix{\mathbf{3} \ar[r]^{\sigma^{0}} & \mathbf{2} \ar[r]^{\sigma^{0}} & \mathbf{1}} \text{ and }
      \xymatrix{\mathbf{3} \ar[r]^{\sigma^{1}} & \mathbf{2} \ar[r]^{\sigma^{0}} & \mathbf{1}}
    \end{equation}
    have the same composition, that is, the unique map $\sigma^{0} : \mathbf{3} \to \mathbf{1}$. Applying the functor $X$ to these sequences shows that both compositions in the lower rectangle equal $X(\sigma^{0}) : \Xbf{3} \to \Xbf{1}$.
  \end{itemize}
\end{itemize}}

\subsection{Different versions of $(\infty,1)$-categories}
\label{sec:infty-1-stuff}
There are several versions of the theory $(\infty,1)$-categories. For our present purpose we are interested in DG categories, $A_{\infty}$-categories, and $\infty$-categories (also known as quasicategories or weak Kan complexes).

DG categories and $A_{\infty}$-categories are always linear over a field $\K$, while $\infty$-categories need not be. However, there is a concept of $\K$-linear $\infty$-category that makes comparison between all three kinds possible.

\subsubsection{DG versus $A_\infty$}
\label{sec:dg-vs-ainf}

Recall that a functor between DG categories or $A_{\infty}$-categories is a \emph{quasi-equivalence} if it induces an equivalence on cohomology categories, or in other words, it induces a quasi-isomorphism on all morphism complexes and is essentially surjective. A basic observation is that every DG category is an $A_{\infty}$-category with vanishing higher operations, and a less basic one is that every $A_{\infty}$-category is quasi-equivalent to a DG category.

\textcolor{secondrevision}{There are several ways to construct a DG category equivalent to a given $A_{\infty}$-category $\cA$. One is to take the image of the Yoneda embedding $\cA \to \Mod \cA$. The Yoneda lemma implies that this functor is a quasi-equivalence onto its image, and $\Mod \cA$ is always a DG category. Another way to do this, used for instance by Canonaco-Ornaghi-Stellari \cite{COS}, involves taking the bar construction $B\cA$, which is a ``DG cocategory'', and then taking the cobar construction $\Omega(B\cA)$ of that, which is a DG category. In \cite{COS} it is shown that $\Omega(B\cA)$ is quasi-equivalent to $\cA$.}

\textcolor{secondrevision}{In fact, the results of Canonaco-Ornaghi-Stellari \cite{COS} imply that the homotopy categories of DG categories and $A_{\infty}$-categories, localized with respect to the quasi-equivalences, are equivalent at the $(\infty,1)$-categorical level. The present author has written a note that explains this in detail \cite{dg-versus-a-infinity}.}

\textcolor{secondrevision}{The precise result that we shall use is as follows. Let $\mathrm{dgcat}$ denote the ordinary category whose objects are DG categories and
morphisms are DG functors, and let $\mathrm{a_{\infty}cat}$ denote the ordinary category whose objects are $A_{\infty}$-categories and morphisms are $A_{\infty}$-functors. Let $W_{\mathrm{dg}}$ and $W_{A_{\infty}}$ denote the classes of quasi-equivalences. Define
\begin{equation}
  \DGcat = N(\mathrm{dgcat})[N(W_{\mathrm{dg}})^{-1}], \quad \Ainfcat = N(\mathrm{a_{\infty}cat})[N(W_{A_{\infty}})^{-1}]
\end{equation}
to be localizations of $\infty$-categories. Then $\DGcat$ and $\Ainfcat$ are equivalent as $\infty$-categories \cite[Corollary 5.2]{dg-versus-a-infinity}.}

\subsubsection{$A_{\infty}$-nerve}
\label{sec:ainf-nerve}
It is also possible to take a single $A_{\infty}$-category $\cA$ and construct an $\infty$-category out of it. This is the \emph{$A_{\infty}$-nerve} construction $N_{A_{\infty}}$ due to Faonte \cite{faonte-nerve,faonte-homotopy} and Tanaka \cite{tanaka-thesis} independently. The construction starts with the set $[n] = \{0 < 1 < \cdots < n\}$ considered as a category with a single morphism $i  \to j$ if $i \leq j$. There is then a $\K$-linear category $\Delta^{n}$ that has these morphisms as basis elements; it is a DG category with vanishing differential. The set of $n$-simplices in the nerve is then given by
\begin{equation*}
  N_{A_{\infty}}(\cA)([n]) = \Fun_{A_{\infty}}(\Delta^{n},\cA).
\end{equation*}
where the right-hand side is the set of $A_{\infty}$-functors.

Applying $N_{A_{\infty}}$ to categories of $A_{\infty}$-functors is another way to understand the categories of DG and $A_{\infty}$-categories. According to work of Faonte \cite{faonte-homotopy} and Oh-Tanaka \cite{oh-tanaka}, for any $A_{\infty}$-categories $\cA$ and $\cB$, there is a weak homotopy equivalence
\begin{equation*}
  \Hom_{\Ainfcat}(\cA,\cB) \to N_{A_{\infty}}(\Fun_{A_{\infty}}(\cA,\cB))^{\sim}
\end{equation*}
where the right-hand side is the largest $\infty$-groupoid contained in the $A_{\infty}$-nerve of the $A_{\infty}$-category of $A_{\infty}$-functors from $\cA$ to $\cB$. 

\subsubsection{DG and $A_{\infty}$ versus $\infty$-categories}

Another important piece of work relating DG categories to $\infty$-categories is that of Lee Cohn \cite{cohn}. Cohn considers not the category $\DGcat$ defined above but a further localization $\DGcat_{\mathrm{Morita}}$ where all Morita equivalences have been inverted. A \emph{Morita equivalence} is a functor that induces an equivalence of categories of modules. Cohn shows that $\DGcat_{\mathrm{Morita}}$ is equivalent to the $\infty$-category of idempotent complete $\K$-linear stable $\infty$-categories.

Rather than formally inverting Morita equivalences, another choice is to enlarge the $A_{\infty}$-categories themselves. Given an $A_{\infty}$-category $\cA$, we may form its idempotent complete triangulated envelope $\mathrm{Tw}^{\pi}(\cA)$. Ornaghi \cite{ornaghi} has shown that the $A_{\infty}$-nerve takes idempotent complete triangulated $A_{\infty}$-categories to idempotent complete stable $\infty$-categories. As of this writing, it is not entirely clear whether the $\infty$-categories of idempotent complete triangulated $A_{\infty}$-categories and idempotent complete stable $\infty$-categories are equivalent, but research in this area seems to be moving towards this conclusion.

\subsubsection{Tensor products}
\label{sec:ainf-tensor-product}
We must also discuss the tensor product of DG and $A_{\infty}$-categories. It is a easy to construct a tensor product operation for DG categories by taking pairs of objects and tensor products of morphism complexes. However, this notion is not homotopically stable. According to Tabuada \cite{tabuada}, there is a model structure on the ordinary category of DG categories whose weak equivalences are $W_{\mathrm{dg}}$, and whose associated homotopy $\infty$-category is $\DGcat \cong \Ainfcat$. The derived tensor product \cite{toen-dg} of two DG categories $C$ and $D$ is then defined to be
\begin{equation*}
  C \otimes^{\mathbb{L}} D := Q(C) \otimes D
\end{equation*}
where $Q(C)$ is a functorial cofibrant replacement for $C$ with the same set of objects, and the tensor product on the right-hand side is the naive tensor product of DG categories. The operation $\otimes^{\mathbb{L}}$ does respect quasi-equivalences, and so it descends to an operation on the homotopy category. In the rest of the paper, the operation $\otimes^{\mathbb{L}}$ is denoted simply by $\otimes$, because we have no use for the naive tensor product of DG categories.

\textcolor{thirdrevision}{Multiple approaches to defining the tensor product of $A_{\infty}$-algebras and $A_{\infty}$-categories are available in the literature, some of them very explicit. The underlying problem is to construct a diagonal $\Delta : A_{\infty} \to A_{\infty}\times A_{\infty}$ for the $A_{\infty}$-operad, that is, a diagonal for the associahedra; a diagonal on the multiplihedra may be used to define the tensor product of $A_{\infty}$-morphisms and $A_{\infty}$-functors. See Saneblidze-Umble \cite{saneblidze-umble}, Markl-Shnider \cite{markl-shnider}, and Loday \cite{loday}. Masuda-Thomas-Tonks-Vallette \cite{MTTV} and Laplante-Anfossi--Mazuir \cite{laplante-anfossi-mazuir} have provided convenient realizations of these diagonals in terms of polytopes.}

\textcolor{thirdrevision}{This method of defining tensor products of $A_{\infty}$-algebras and $A_{\infty}$-categories still presents challenges:
 if $\mathrm{a_{\infty}cat}$ denotes the ordinary category of $A_{\infty}$-categories and $A_{\infty}$-functors, then there is apparently no way to use such diagonals to make $\mathrm{a_{\infty}cat}$ into a monoidal category in the sense of Definition \ref{defn:ordinarymonoidalcat}. There is no diagonal on the multiplihedra that is strictly compatible with composition of $A_{\infty}$-functors \cite[Proposition 4.25]{laplante-anfossi-mazuir}; this means that the tensor product does not define a functor $\mathrm{a_{\infty}cat} \times \mathrm{a_{\infty}cat} \to \mathrm{a_{\infty}cat}$, although it is functorial with respect to strict\footnote{An $A_{\infty}$-morphism or $A_{\infty}$-functor is called \emph{strict} if all of its components vanish except for the linear one.} $A_{\infty}$-functors. If we restrict to strict $A_{\infty}$-functors, there is still a problem, since there is no diagonal on the associahedra that is strictly coassociative \cite[Theorem 6.1]{markl-shnider}; this means that the tensor product is only associative up to an $A_{\infty}$-isomorphism that cannot always be strict. However, the diagonal on the associahedra is coassociative up to homotopy, and the tensor product of $A_{\infty}$-functors is compatible with composition up to homotopy \cite[Proposition 4.26]{laplante-anfossi-mazuir}. Thus a homotopy-coherent framework, such as the theory of monoidal $\infty$-categories (Section \ref{sec:monoidal-infinity-cat}), is necessary to handle this case.}

\textcolor{thirdrevision}{For our present purpose, we shall not use an explicit formula for the tensor product of $A_{\infty}$-categories. Instead, we use the equivalence of $\infty$-categories $\DGcat \cong \Ainfcat$ to transfer the derived tensor product on $\DGcat$ over to $\Ainfcat$, and in this way make $\Ainfcat$ into a monoidal $\infty$-category whose product is also denoted by $\otimes$.}

\textcolor{thirdrevision}{In his work on the K\"{u}nneth theorem for Lagrangian Floer cohomology, Amorim \cite{amorim-tensor,amorim-kunneth} used a similar approach. Amorim converts given $A_{\infty}$-algebras $A$ and $B$ into quasi-isomorphic DG algebras, takes the tensor product in DG algebras, and then applies homotopy transfer to obtain an $A_{\infty}$-structure on $A\otimes B$. Amorim works with filtered and possibly curved $A_{\infty}$-algebras, and gives explicit formulas for all of these steps; his construction recovers the Markl-Shnider tensor product in cases where the latter is defined. Thus Amorim's work shows that the tensor product defined in terms of a diagonal can be seen as a particular instance of the general idea of transferring the tensor product from DG to $A_{\infty}$-algebras. It would be an excellent project to make the explicit tensor products of $A_{\infty}$-categories mentioned above fit into the $\infty$-categorical framework.}

There is also a tensor product on $\K$-linear stable $\infty$-categories \cite[\S 4.1]{BFN}, which is the recipient of the universal colimit preserving bifunctor $C \times D \to C\otimes D$. An alternative way to define the tensor product would be to replace $\Ainfcat$ and $\DGcat$ by their localizations at Morita equivalences (or apply $\mathrm{Tw}^{\pi}$ to all of the categories), apply Cohn's result, and then transport the tensor product of $\K$-linear stable $\infty$-categories over. The general formalism used in this paper is not sensitive to this choice.

\subsubsection{$\Sympcat$ as an $(\infty,1)$-category}
\label{sec:symp-as-oo-cat}

These remarks suggest a way to formulate the structure of $\Sympcat$ as an $(\infty,1)$-category. The objects are symplectic manifolds, and the space of $1$-morphisms $X \to Y$ is the $\infty$-groupoid
\begin{equation*}
  N_{A_{\infty}}(\cF(\overline{X} \times Y))^{\sim}
\end{equation*}
where $\cF$ stands for the idempotent complete triangulated Fukaya $A_{\infty}$-category. The composition in this category should be induced by the Ma'u-Wehrheim-Woodward composition functors, but this may not be strictly associative. An natural Ansatz is that these composition functors may be extended to a Segal category enriched over $\infty$-groupoids (see Section \ref{sec:inf2}). There is a Quillen equivalence between such Segal categories and $\infty$-categories \cite{joyal-tierney}, and passing through this equivalence we may regard $\Sympcat$ as an $\infty$-category. This version of $\Sympcat$ would suffice for the construction of monoid objects in the main body of the paper.

\subsection{Monoidal $(\infty,1)$-categories and monoid objects in them}
\label{sec:monoidal-infinity-cat}

It turns out that Definition \ref{defn:cartmonoid} generalizes directly to $\infty$-categories.

\begin{defn}[\cite{HA} Definition 4.1.2.5]
  \label{defn:cartmonoid-oo}
  Let $\calC$ be an $\infty$-category. A \emph{monoid object} of $\calC$ is a simplicial object $X: N(\bDelta^{\op}) \to \calC$, with the property that the principal edges
  \begin{equation*}
    \left\{e_{i,i+1} : X([n]) \to X([1])\right\}_{0\leq i \le n-1}
  \end{equation*}
  exhibits $X([n])$ as a product $X([1])^{n}$. \textcolor{secondrevision}{This condition is called the \emph{Segal condition}.}
\end{defn}
The complexity of this definition is contained in the phrase ``exhibit as a product''. In the context of this definition it means that, for any object $Y$ of $\calC$, the principal edges induce a map on mapping spaces
\begin{equation*}
  \Map_{\calC}(Y,X([n])) \to \prod_{i=0}^{n-1} \Map_{\calC}(Y,X([1])),
\end{equation*}
and this map is required to be a weak homotopy equivalence.

Amazingly, Definition \ref{defn:cartmonoid-oo} can even be applied when $\calC = \Cat_{\infty}$ is the $\infty$-category of $\infty$-categories, and this gives one of the possible definitions of a monoidal $\infty$-category.
\begin{defn}[\cite{DAG-II} Remark 1.2.5, \cite{GR-I} Chapter 1 \S{}3.1]
  \label{defn:monoidal-oo-cat}
  A \emph{monoidal $\infty$-category} is a monoid object in $\Cat_{\infty}$. 
\end{defn}
This means that a monoidal $\infty$-category is a simplicial object $X: N(\bDelta^{\op}) \to \Cat_{\infty}$ with the property that the principal edges induce an equivalence of $\infty$-categories $X([n]) \to X([1])^{n}$.

\begin{defn}
  \label{defn:cartmonoidal-ainfcat}
  A \emph{monoidal $A_{\infty}$-category} is a monoid object in $\Ainfcat$, where the latter is considered as an $\infty$-category. This means that the map $X([n]) \to X([1])^{n}$ is a quasi-equivalence of $A_{\infty}$-categories.
\end{defn}

\textcolor{secondrevision}{These definitions are direct generalizations of Definition \ref{defn:cartmonoid}, which was in turn motivated by Proposition \ref{prop:bmonoid}. Generalizing the discussion that surrounds that proposition and definition, we now explain how these new definitions gives rise to an ``associative bifunctor.''}

\textcolor{secondrevision}{Let $X$ be a monoidal $A_{\infty}$-category in the sense of Definition \ref{defn:cartmonoidal-ainfcat}. Then for each $n \geq 0$, $X([n])$ is an $A_{\infty}$-category, and we focus on $n = 1,2$. Write $\cA$ for $X([1])$; it is the underlying category of the monoidal structure. There are three $A_{\infty}$-functors
  \begin{equation*}
    X(d_{0}),X(d_{1}),X(d_{2}) : X([2]) \to \cA,
  \end{equation*}
  and the hypothesis is that
  \begin{equation*}
    X(d_{2})\times X(d_{0}) : X([2]) \to \cA \times \cA
  \end{equation*}
  is a quasi-equivalence of $A_{\infty}$-categories. We may therefore choose a homotopy inverse to $X(d_{2})\times X(d_{0})$, call it $F : \cA\times \cA \to X([2])$; this $F$ is not strictly unique but it is unique up to homotopy. Then define
  \begin{equation*}
    m = X(d_{1}) \circ F : \cA \times \cA \to \cA.
  \end{equation*}
  This $m$ is the bifunctor for the monoidal product on $\cA$. Note that it involves a choice of homotopy inverse $F$, so it is not uniquely determined by the data of $X$ itself (but it is determined up to homotopy).}

\textcolor{secondrevision}{The argument that the bifunctor $m$ is associative up to homotopy involves $X([3])$, and the following diagram. For brevity we write $X_{n} = X([n])$ and $d_{i}$ for what is really $X(d_{i})$:
  \begin{equation}
    \xymatrix{&& X_{3} \ar[dl]_-{d_{3}\times (d_{0}\circ d_{0})} \ar[dr]^-{d_{1}} &&\\
      & X_{2} \times X_{1} \ar[dl]_-{(d_{2}\times d_{0})\times 1_{X_{1}}} \ar[dr]^-{d_{1}\times 1_{X_{1}}} && X_{2} \ar[dl]_-{d_{2} \times d_{0}}\ar[dr]^-{d_{1}}& \\
      (X_{1}\times X_{1}) \times X_{1} && X_{1} \times X_{1} && X_{1}
      }
    \end{equation}
    Now we argue as follows
    \begin{itemize}
    \item The simplicial relation $d_{i} \circ d_{j} = d_{j-1}\circ d_{i}$ for $i < j$ immediately implies that the square in this diagram is commutative up to homotopy, since it is the image under $X$ of a commutative diagram in $\bDelta^{\op}$.
    \item The Segal condition implies that the ``down and to the left'' arrows in this diagram are all quasi-equivalences. In fact the composite map $X_{3} \to (X_{1}\times X_{1})\times X_{1}$ in this diagram is the Segal map.
    \item The ``zig-zag'' formed by the bottom two rows corresponds to the composite $m \circ (m \times 1_{X_{1}})$. More precisely, we have chosen a homotopy inverse $F$ for $d_{2}\times d_{1}$, and using that to reverse the arrows in the zig-zag we obtain $m \circ (m \times 1_{X_{1}})$.
    \item We may therefore conclude that the functor $m \circ (m \times 1_{X_{1}})$ is homotopic to a functor obtained by inverting the Segal map $X_{3} \to (X_{1}\times X_{1})\times X_{1}$ and then applying $d_{1}\circ d_{1}: X_{3} \to X_{1}$, which is the ``big zig-zag'' in the diagram.
    \item We then consider the other association $m \circ (1_{X_{1}}\times m)$. A directly analogous diagram shows that this functor is homotopic to a functor obtained by inverting the Segal map $X_{3} \to X_{1} \times (X_{1}\times X_{1})$ and then applying $d_{1} \circ d_{2} : X_{3}\to X_{1}$.
    \item Because $d_{1}\circ d_{2} = d_{1} \circ d_{1}$ in $\bDelta^{\op}$, the functors $X_{3} \to X_{1}$ induced by these maps are also homotopic. Thus the functors considered in the previous two bullet points are homotopic to each other, which was to be shown.
    \end{itemize}
  }
  \textcolor{secondrevision}{The preceding argument implicitly constructs a homotopy $H$ between $m\circ(m \times 1_{X_{1}})$ and $m \circ (1_{X_{1}} \times m)$. The next component $X_{4}$ serves to witness the coherence of $H$ when it is used to relate the five different functors $X_{1}^{4} \to X_{1}$ that may be built from $m$. The higher components $X_{n}$ serve to witness higher ``coherences between the coherences.''} 

Thus far, we have only defined monoid objects in an $\infty$-category $\calC$ in the case where the ambient monoidal structure on $\calC$ is the categorical Cartesian product. We seek to generalize Definition \ref{defn:noncartmonoid} to the case where $(\calC,\otimes)$ is a monoidal $\infty$-category. Recall that $(\bDelta_{a},\oplus)$ is a monoidal category whose monoidal operation is strictly associative. Passing to the nerve $N(\bDelta_{a})$, which is an $\infty$-category, we again obtain a strictly associative bifunctor $\oplus : N(\bDelta_{a}) \times N(\bDelta_{a}) \to N(\bDelta_{a})$. Thus $(N(\bDelta_{a}),\oplus)$ is a monoidal $\infty$-category in the sense of Definition 
\ref{defn:monoidal-oo-cat}.

Strictly speaking, the object that is referred to in Definition \ref{defn:monoidal-oo-cat} is not the pair $(N(\bDelta_{a}),\oplus)$ itself but the simplicial $\infty$-category $X_{(N(\bDelta_{a}),\oplus)} : N(\bDelta^{\op}) \to \Cat_{\infty}$ obtained by ``applying the classifying space construction with respect to $\oplus$.'' That is, $X_{(N(\bDelta_{a}),\oplus)}([n])$ is the $n$-fold Cartesian product $N(\bDelta_{a})^{\times n}$, the face maps are given by projection and $\oplus$, degeneracies are units, and so on.

\begin{defn}
  Let $(\calC_{1},\otimes_{1})$ and $(\calC_{2},\otimes_{2})$ be monoidal $\infty$-categories, whose structures are encoded by simplicial $\infty$-categories $X_{(\calC_{1},\otimes_{1})}$ and $X_{(\calC_{2},\otimes_{2})}$. A \emph{monoidal functor} $F:  (\calC_{1},\otimes_{1}) \to (\calC_{2},\otimes_{2})$ is a map of simplicial $\infty$-categories $X_{(\calC_{1},\otimes_{1})} \to X_{(\calC_{2},\otimes_{2})}$.
\end{defn}

\begin{defn}
  \label{defn:noncart-oo-monoid} Let $(\calC,\otimes)$ be a monoidal $\infty$-category. A \emph{$\otimes$-monoid object in $(\calC,\otimes)$} is a monoidal functor $Z: (N(\bDelta_{a}),\oplus) \to (\calC,\otimes)$.\footnote{$\otimes$-monoid objects are also commonly referred to as \emph{algebras} in $(\calC,\otimes)$.}
\end{defn}

\textcolor{secondrevision}{This definition is a direct generalization of Definition \ref{defn:noncartmonoid}.} It may be challenging to unpack since our object $X_{(N(\bDelta_{a}),\oplus)}$ is ``(co)simplicial'' in (at least) two distinct ways. Let $Z$ be a $\otimes$-monoid object.
Then $Z([1])$ is a map $N(\bDelta_{a}) \to \calC$, and so it is an augmented cosimplicial object in $\calC$. The point is that this augmented cosimplicial object is further constrained by the fact that $Z$ is a monoidal functor. Because \textcolor{secondrevision}{$\mathbf{1}^{\oplus k} = \mathbf{k}$} in $\bDelta_{a}$, and $Z$ is a monoidal functor, there is a single object \textcolor{secondrevision}{$A = Z([1])(\mathbf{1})$} in $\calC$ such that \textcolor{secondrevision}{$Z([1])(\mathbf{k})$ is equivalent to $A^{\otimes k}$} in $\calC$. It also implies that there are maps $m : A \otimes A \to A$ and $e : 1_{\calC} \to A$ such that the restriction of $Z([1])$ to the generating coface and codegeneracy maps in $\bDelta_{a}$ is equivalent to a diagram of the form
\begin{equation}
  \label{eq:cosimp-alg}
  \xymatrix { 1_{\calC} \ar[r] & A \ar@<1ex>[r]  \ar@<-1ex>[r]  &  A\otimes A \ar@<0ex>[l] \ar@<0ex>[r] \ar@<2ex>[r] \ar@<-2ex>[r] & A\otimes A \otimes A \ar@<1ex>[l] \ar@<-1ex>[l] } \cdots
\end{equation}
where the left-to-right maps come from $e$, and the right-to-left maps come from $m$.  The statement that $Z$ is a monoidal functor also encodes an extension of this diagram to a fully homotopy coherently associative and unital multiplication on the object $A$.

\textcolor{secondrevision}{Indeed, the homotopy coherent associativity of $m : A \otimes A \to A$ in this context may be explained in terms of the diagram \eqref{eq:big-diagram} (where $X = Z([1])$ and $A = \Xbf{1}$). Recall that this diagram is used to prove the associativity of $m$ in the classical context, and that the proof proceeded by showing that each of the smaller rectangles commutes due to a certain \emph{condition}: functoriality of $X$, naturality of $J$, and the coherence condition for $J$. In the $\infty$-categorical context, each of these \emph{conditions} is replaced by \emph{data} that witness the homotopy commutativity of each rectangle in \eqref{eq:big-diagram}.}

Conversely, giving a $\otimes$-monoid object of $(\calC,\otimes)$ is essentially the same as giving maps $m : A\otimes A\to A$ and $e :1_{\calC} \to A$ such that the diagram \eqref{eq:cosimp-alg} admits an extension to a \textcolor{secondrevision}{monoidal functor $(N(\bDelta_{a}),\oplus) \to (\calC,\otimes)$}, together with a choice of such extension. We summarize this discussion in the following proposition.
\begin{prop}
  \label{prop:noncart-oo-monoid}
  To give a $\otimes$-monoid object in a monoidal $\infty$-category $(\calC,\otimes)$, it is equivalent to give a object $A$ and maps $m : A \otimes A \to A$ and $e : 1_{\calC} \to A$, such that the diagram \eqref{eq:cosimp-alg} constructed from these data admits an extension to a \textcolor{secondrevision}{monoidal functor $(N(\bDelta_{a}),\oplus) \to (\calC,\otimes)$}, together with a choice of such extension.
\end{prop}

\begin{rmk}
  The definition of monoid object in $(\calC,\otimes)$ presented here is different from the one in \cite{DAG-II}. Let us comment on the relationship. In \cite{DAG-II}, a monoidal $\infty$-category is regarded as a coCartesian fibration of $\infty$-categories $p : \calC^{\otimes} \to N(\bDelta^{\op})$ satisfying a Segal condition, where the underlying $\infty$-category is the fiber of this fibration at $[1]$, that is, $\calC = \calC^{\otimes}_{[1]}$. It is proved that this definition is equivalent to Definition \ref{defn:monoidal-oo-cat}. A \emph{monoid object} in $(\calC,\otimes)$ is then defined to be a section $X : N(\bDelta^{\op}) \to \calC^{\otimes}$ of the coCartesian fibration $p$. Thus a monoid object is in particular a simplicial object, but in the larger category $\calC^{\otimes}$, not $\calC$ itself.

  We now sketch how to pass from such a monoid object to one in the sense of Definition \ref{defn:noncart-oo-monoid}. Take the map $X : N(\bDelta^{\op})\to \calC^{\otimes}$, and restrict it along the embedding $N(\bDelta_{a}) \to N(\bDelta^{\op})$ to obtain an augmented cosimplicial object $X'$ in $\calC^{\otimes}$. Now $X'$ is a section of the restricted fibration $p' : \calC^{\otimes}|_{N(\bDelta_{a})} \to N(\bDelta_{a})$. Therefore $X'([n])$ is an object of $\calC^{\otimes}_{[n]}$, and this object may be projected to $\calC^{\otimes}_{[1]} = \calC$ using the $\otimes$-product. The result is an augmented cosimplicial object in $\calC$ of the correct form to be a monoid object in the sense of our definition.
\end{rmk}

\subsection{The monoidal $\infty$-categories $(\Ainfcat,\otimes)$ and $(\Sympcat,\times)$ and monoid objects in them.}
\label{sec:monoids-in-symp}

Now we turn to the main application of the preceding theory in the main body of the paper. For triangulated $A_{\infty}$-categories $A$ and $B$, we have two notions of product, the ordinary product $A\times B$ and the $\K$-linear tensor product $A\otimes B$. For any third $A_{\infty}$-category $C$, functors $A\times B\to C$ are $A_{\infty}$-bifunctors, but functors $A\otimes B \to C$ are $A_{\infty}$-functors that are $\K$-bilinear, in the sense that they commute with the operation of tensoring an object in $A$ or $B$ with any given chain complex over $\K$.

For instance, suppose that $X$ and $Y$ are perfect stacks \cite{BFN}. If $\Perf$ denotes the $\infty$-category of perfect complexes, then we have \cite[Theorem 1.2]{BFN}
\begin{equation*}
  \Perf(X) \times \Perf(Y) = \Perf(X \coprod Y),\quad \Perf(X) \otimes \Perf(Y) = \Perf(X\times Y).
\end{equation*}
The same thing happens for Fukaya categories: $\cF(X)\times \cF(Y)$ is at least morally the Fukaya category of the disjoint union $X \coprod Y$, while $\cF(X)\otimes \cF(Y)$ is comparable to $\cF(X\times Y)$. 

We turn our attention now to $\Sympcat$, with the understanding that our discussion is conditional on certain Ans\"{a}tze about this object. Given two symplectic manifolds $X$ and $Y$, we may form their Cartesian product $X \times Y$. This is the same thing as the categorical Cartesian product \emph{in the category of smooth manifolds}, in the sense that, for any manifold $Z$ a smooth map $Z \to X\times Y$ is the same as a pair of smooth maps $Z \to X$ and $Z \to Y$. However, \emph{the Cartesian product $X \times Y$ is not the categorical Cartesian product in $\Sympcat$}. For if it were, then we would have projection morphisms $X \times Y \to X$ and $X \times Y \to Y$, but these maps are not given by Lagrangian correspondences; also, the statement ``a Lagrangian correspondence from $Z$ to $X\times Y$ is the same thing as a pair of Lagrangian correspondences from $Z$ to the factors'' is obviously false.

In summary, the tensor product on $\Ainfcat$ and the Cartesian product on $\Sympcat$ are definitely \emph{non-Cartesian} monoidal structures, and this is why we need to use the theory introduced above.

The statement that $(\Ainfcat,\otimes)$ is a monoidal $\infty$-category is a purely algebraic statement, and we shall not elaborate on it further. The statement that $(\Sympcat,\times)$ is a monoidal $\infty$-category is worth understanding. As suggested in Section \ref{sec:symp-as-oo-cat}, this $\infty$-category would be the one where the space of $1$-morphisms $X \to Y$ is the $\infty$-groupoid
\begin{equation*}
  N_{A_{\infty}}(\cF(\overline{X}\times Y))^{\sim}
\end{equation*}
obtained as the largest Kan complex contained in the $A_{\infty}$-nerve, but since that object is derived from $\cF(\overline{X}\times Y)$ it suffices to consider the latter. The functor $\times : \Sympcat \times \Sympcat \to \Sympcat$ takes a pair of symplectic manifolds $(X,Y)$ to their product $X \times Y$. Considering pairs of $1$-morphisms $(X_{1}\to X_{2}, Y_{1}\to Y_{2})$, we must also have a functor
\begin{equation*}
  \times : \cF(\overline{X}_{1}\times X_{2}) \times \cF(\overline{Y}_{1}\times Y_{2}) \to \cF(\overline{X}_{1}\times \overline{X}_{2} \times Y_{1} \times Y_{2}).
\end{equation*}
This functor should be defined by taking a pair of Lagrangian correspondences to their Cartesian product (and rearranging the factors). Since the underlying geometric operation is Cartesian product at all levels, it seems very likely that, in the final analysis of $\Sympcat$, this bifunctor can be made strictly associative.

We should also discuss the meaning of the statement that the Fukaya category functor
\begin{equation*}
  \cF: (\Sympcat,\times) \to (\Ainfcat,\otimes)
\end{equation*}
is a monoidal functor. In order for this to be true, we would need to know that $\cF(\point)$ is the category of chain complexes, and that $\cF(X\times Y) \cong \cF(X)\otimes \cF(Y)$. The statement about $\cF(\point)$ is true as a purely formal matter, but the second claim has geometric content, since it says that $\cF(X\times Y)$ is generated by product Lagrangians. In general, all that is obvious is that there is a functor $\cF(X)\otimes \cF(Y) \to \cF(X\times Y)$ that takes a pair of Lagrangians to its product. This means precisely that $\cF$ is a \emph{lax monoidal functor} from $(\Sympcat,\times)$ to $(\Ainfcat,\otimes)$.

In the theory of ordinary monoidal categories, it is known that lax monoidal functors take monoid objects to monoid objects \cite[article ``monoidal functor'']{nlab}. We could attempt to formulate this fact at the $\infty$-categorical level, which would imply that any monoid object in $(\Sympcat,\times)$ goes over to a monoid object in $(\Ainfcat,\otimes)$ after taking the Fukaya category. Instead we shall content ourselves to writing out the definition of monoid objects in the two categories in a way that makes the parallel between them obvious.

\begin{defn}[Monoid object in $\Sympcat$]
  \label{defn:monoid-in-symp}
  A \emph{$\times$-monoid object in $\Sympcat$} consists of a symplectic manifold $G$ and (generalized) Lagrangian correspondences $m : G\times G \to G$ and $e : \point \to G$, such that the diagram
  \begin{equation}
  \label{eq:cosimp-in-symp}
  \xymatrix { \point \ar[r] & G \ar@<1ex>[r]  \ar@<-1ex>[r]  &  G\times G \ar@<0ex>[l] \ar@<0ex>[r] \ar@<2ex>[r] \ar@<-2ex>[r] &G \times G \times G \ar@<1ex>[l] \ar@<-1ex>[l] } \cdots
\end{equation}
where the left-to-right correspondences come from $e$, and the right-to-left correspondences come from $m$, extends to \textcolor{secondrevision}{a monoidal functor $(N(\bDelta_{a}),\oplus) \to (\Sympcat,\times)$}, together with a choice of such extension.
\end{defn}

\begin{defn}[Monoid object in $\Ainfcat$]
  \label{defn:monoid-in-ainfcat}
  A \emph{$\otimes$-monoid object in $\Ainfcat$} consists of an $A_{\infty}$-category $\cF$ and $A_{\infty}$-functors $m : \cF \otimes\cF \to \cF$ and $e : \Ch_{\K} \to \cF$, such that the diagram
  \begin{equation}
  \label{eq:cosimp-in-ainfcat}
  \xymatrix { \Ch_{\K} \ar[r] & \cF \ar@<1ex>[r]  \ar@<-1ex>[r]  &  \cF \otimes \cF \ar@<0ex>[l] \ar@<0ex>[r] \ar@<2ex>[r] \ar@<-2ex>[r] &\cF \otimes \cF \otimes \cF\ar@<1ex>[l] \ar@<-1ex>[l] } \cdots
\end{equation}
where the left-to-right functors come from $e$, and the right-to-left functors come from $m$, extends to \textcolor{secondrevision}{a monoidal functor $(N(\bDelta_{a}),\oplus) \to (\Ainfcat,\otimes)$}, together with a choice of such extension.
\end{defn}

\subsection{Modules}
\label{sec:modules}
We have so far considered monoid objects, but we also need to discuss module objects over monoid objects. The main difference is that we replace the indexing category $\bDelta$ with a new version $\bDelta^{+}$ where some simplices have marked vertices. There is not really any new complexity to the notion of homotopy associativity, so we will be brief.

We use the category $\bDelta^{+}$ defined in \cite[Chapter 1 \S{}3.4]{GR-I}. The category $\bDelta^{+}$ has two kinds of objects: for each $n \geq 0$, we have the linearly ordered set $[n] = \{0 < 1 < \cdots < n\}$ and also the linearly ordered set $[n]^{+} =\{0 < 1 < \cdots < n < +\}$ that has a new element $+$ added on. The element $+$ marks the position of the module in the constructions. The morphisms in $\bDelta^{+}$ are defined as follows
\begin{enumerate}
\item morphisms $[n] \to [m]$ are monotonic maps,
\item morphisms $[n] \to [m]^{+}$ are monotonic maps whose image does not contain $+$,
\item there are no morphisms $[n]^{+}\to [m]$,
\item morphisms $[n]^{+}\to [m]^{+}$ are monotonic maps that map $+$ to $+$, and such that the preimage of $+$ is $+$.
\end{enumerate}

The idea behind this definition is that given a monoid $A$ and a module $M$ (in the category of sets), we define $X([n]) = A^{n}$ and $X([n]^{+}) = A^{n}\times M$. There are maps $A^{m} \to A^{n}$ that encode the monoid structure of $A$, there are maps $A^{m}\times M\to A^{n}$ that project out the $M$ factor and use the monoid structure of $A$, there are no meaningful maps $A^{m} \to A^{n}\times M$, and there are maps $A^{m}\times M \to A^{n}\times M$ that encode the module structure of $M$.

Observe that the category $\bDelta^{+}$ contains $\bDelta$ as a full subcategory. We first consider the Cartesian case. 
\begin{defn}[\cite{GR-I}, Chapter 1 \S{}3.4]
  \label{defn:cartmonoidmodulepair}
  Let $\calC$ be an $\infty$-category. A \emph{monoid-module pair} in $\calC$ consists of a map $X: N((\bDelta^{+})^{\op}) \to \calC$, with the property that the restriction of this map to $N(\bDelta^{\op})$ is a monoid object in the sense of Definition \ref{defn:cartmonoid-oo}, and such that the map
  \begin{equation*}
    X([n]^{+}) \to X([n]) \times X([0]^{+})
  \end{equation*}
  induced by the maps $[n] \to [n]^{+}$ sending $i \mapsto i$ and $[0]^{+} \to [n]^{+}$ sending $0 \mapsto n$ and $+ \mapsto +$, exhibits the left-hand side as a product.
\end{defn}
The object $X([0]^{+})$ is the underlying object of the module in the monoid-module pair. Applying this definition with $\calC = \Cat_{\infty}$ or $\calC = \Ainfcat$ \textcolor{secondrevision}{(with the Cartesian monoidal structure on $\calC$)} gives one definition of module category for a monoidal category.

To generalize to the case $(\Ainfcat,\otimes)$ or $(\Sympcat,\times)$, where the ambient monoidal structure is not Cartesian, we follow the same strategy of throwing away morphisms in $\bDelta^{+}$ that correspond to projections, meaning that the maximum element must always go to the maximum element, and the minimum element must always go to the minimum element. This has the effect that there are no morphisms between $[n]$ and $[m]^{+}$ in either direction. \textcolor{secondrevision}{Thus the resulting category decomposes into a disjoint union $\bDeltaint \coprod \bDeltaint^{+}$, where $\bDeltaint^{+}$ is the full subcategory of objects of the form $[m]^{+}$. The equivalence $\bDelta_{a} \cong \bDeltaint^{\op}$ and the monoidal structure on $\bDelta_{a}$ may be extended as follows.}

\textcolor{secondrevision}{Recall the monoidal category $(\bDelta_{a},\oplus)$ whose objects are finite ordinals $\mathbf{n}$. For each $n \geq 0$, let $\mathbf{n}^{+}$ be $\mathbf{n}$ with a new maximal element $+$ adjoined. Define a new category $\bDelta_{a}^{+}$ whose objects are $\mathbf{n}^{+}$, and such that the morphisms $\mathbf{n}^{+} \to \mathbf{m}^{+}$ are monotonic maps that map $+$ to $+$. Then $\bDelta_{a}^{+}$ is a module category (in the strict sense, and hence in any weaker or homotopy coherent sense) for the strict monoidal category $(\bDelta_{a},\oplus)$: the action on objects is $\mathbf{n} \oplus \mathbf{m}^{+} = (\mathbf{n}+\mathbf{m})^{+}$, that is, concatenation of totally ordered sets, and the action on morphisms is a concatenation operation similar to the one described in Proposition \ref{prop:simplexmonoidal}.}

\textcolor{secondrevision}{The equivalence of categories $\bDelta_{a} \cong \bDeltaint^{\op}$ from Proposition \ref{prop:simplex-dual} generalizes to an equivalence $\bDelta_{a}^{+} \cong (\bDeltaint^{+})^{\op}$. These equivalences send $\mathbf{n}$ to $[n]$ and $\mathbf{n}^{+}$ to $[n]^{+}$.}

\textcolor{secondrevision}{Recall that in $(\bDelta_{a},\oplus)$, the object $\mathbf{1}$ is a monoid object. In $\bDelta^{+}$, the object $\mathbf{0}^{+} = \{+\}$ is a module object over $\mathbf{1}$, where the action is given by the unique morphism $\mathbf{1}\oplus\mathbf{0}^{+} = \mathbf{1}^{+} \to \mathbf{0}^{+}$. (In this sentence we have invoked the concept of a ``module $B$ over a monoid $A$ where $A$ is an object of a monoidal category $\calC$ and $B$ is an object of a module category $\mathcal{M}$ for $\calC$''. This concept exists in the classical case but we have not developed it. The present paragraph is meant to serve as motivation for the formal definition that follows.) Just as $(\bDelta_{a},\mathbf{1})$ is the universal ``monoidal category with a monoid object,'' the tuple $(\bDelta_{a},\bDelta_{a}^{+},\mathbf{1},\mathbf{0}^{+})$ is the universal ``monoidal category with a module category and a monoid object and a module object.''}

\textcolor{secondrevision}{For the next definition, observe that any monoidal $\infty$-category $(\calC,\otimes)$ is a module over itself.}

\textcolor{secondrevision}{\begin{defn}
    \label{defn:noncartmonoidmodulepair}
    Let $(\calC,\otimes)$ be a monoidal $\infty$-category. Let $X_{(\calC,\calC)} : N((\bDelta^{+})^{\op}) \to \Cat_{\infty}$ be monoid-module pair corresponding to $\calC$ regarded as a module over itself, and let $X_{(\bDelta_{a},\bDelta_{a}^{+})}: N((\bDelta^{+})^{\op}) \to \Cat_{\infty}$ be the monoid-module pair corresponding to the classical monoidal category $\bDelta_{a}$ and its module category $\bDelta_{a}^{+}$. Then we define a \emph{$\otimes$-monoid-module pair in $\calC$} to be a morphism $F : X_{(\bDelta_{a},\bDelta_{a}^{+})} \to X_{(\calC,\calC)}$.
  \end{defn}}
    
\textcolor{secondrevision}{More generally, if $Y : N((\bDelta^{+})^{\op}) \to \Cat_{\infty}$ is any monoid-module pair in $\Cat_{\infty}$, then an \emph{$Y$-monoid-module pair} is a morphism $ F: X_{(\bDelta_{a},\bDelta_{a}^{+})} \to Y$.}

\textcolor{secondrevision}{Let us unpack this definition slightly: given a morphism $F : X_{(\bDelta_{a},\bDelta_{a}^{+})} \to X_{(\calC,\calC)}$, the evaluation on $[1]$ is a map $F([1]) : N(\bDelta_{a}) \to \calC$, and evaluating this on $\mathbf{1}$ gives an object $A = F([1])(\mathbf{1})$ of $\calC$ that is the underlying object of the monoid in the $\otimes$-monoid-module pair. Evaluating $F$ on $[0]^{+}$ gives a map $F([0]^{+}) : N(\bDelta^{+}) \to \calC$, and evaluating this on $\mathbf{0}^{+}$ gives an object $M = F([0]^{+})(\mathbf{0}^{+})$ of $\calC$ that is the underlying object of the module.}


\subsection{Category objects and $(\infty,2)$-categories}
\label{sec:inf2}

Considering the characterization of the nerve of a category expressed by Proposition \ref{prop:cnerve}, we may define the notion of a ``category object'' or ``Segal object'' in an $\infty$-category, and this provides an approach to the theory of $(\infty,2)$-categories. For instance this approach is used in \cite{goodwillie-I, GR-I}.

\begin{defn}
  \label{defn:catobj}
  Let $S$ be a set, and let $\calC$ be an $\infty$-category. A \emph{category object in $\calC$ with object set $S$} is a map $X : N(\bDelta_{S}^{\op}) \to \calC$ with such that
  \begin{enumerate}
  \item for any $s \in S$, $X([s])$ is a final object of $\calC$,
  \item for any sequence $[s_{0},\dots,s_{n}]$, the \emph{Segal map} (induced by the principal edges)
  \begin{equation*}
    X([s_{0},s_{2},\dots,s_{n}]) \to X([s_{0},s_{1}])\times X([s_{1},s_{2}]) \times \cdots \times X([s_{n-1},s_{n}])
  \end{equation*}
  exhibits the left-hand side as a product in $\calC$.
  \end{enumerate}
\end{defn}
Applying this definition with $\calC = \Cat_{\infty}$ is one of the known ways to define the concept of $(\infty,2)$-category. 
\begin{defn}
  A \emph{Segal category enriched over $\infty$-categories} is a category object in $\Cat_{\infty}$ in the sense of Definition \ref{defn:catobj}. A \emph{Segal category enriched over $A_{\infty}$-categories} is a category object in $\Ainfcat$ in the same sense, meaning that the Segal map is always an quasi-equivalence of $A_{\infty}$-categories.  In this definition we regard $\Cat_{\infty}$ and $\Ainfcat$ as $\infty$-categories.
\end{defn}

Let us unpack this definition a little bit in the $A_{\infty}$ case. Let $\cC$ be a Segal category enriched over $A_{\infty}$-categories with object set $S$. Then for any objects $s_{0},s_{1} \in S$, there is an $A_{\infty}$-category $\Hom_{\cC}(s_{0},s_{1})$. Given three objects $s_{0},s_{1},s_{2}$, we have a composition $A_{\infty}$-functor
\begin{equation*}
  \Hom_{\cC}(s_{0},s_{1})\times \Hom_{\cC}(s_{1},s_{2}) \to \Hom_{\cC}(s_{0},s_{2})
\end{equation*}
obtained by taking a homotopy inverse to the Segal map at level $[s_{0},s_{1},s_{2}]$ followed by the middle face map $d_{1}$. These composition $A_{\infty}$-functors are then associative up to an $A_{\infty}$ natural isomorphism of $A_{\infty}$-functors, together with all higher coherences for this associativity.

\begin{rmk}
  A Segal category enriched over $A_{\infty}$-categories is a kind of $(\infty,2)$-category that is a mixture of two approaches, where we are using the Stasheff associahedra to govern the homotopy associativity of the $2$-morphisms and higher morphisms, but using the simplex category $\bDelta^{\op}$ to govern the homotopy associativity of the $1$-morphisms. The notion of $(A_{\infty},2)$-categories defined by Bottman and Carmeli \cite{bottman,bottman-carmeli} uses the $2$-associahedra to govern all morphisms simultaneously.
\end{rmk}

\begin{rmk}
  It is possible to iterate the definition of Segal category to obtain theories of $(\infty,n)$-categories for all $n \geq 0$. An important point here is that, in passing from $(\infty,n-1)$-categories to $(\infty,n)$-categories, one first needs to define the correct notion of weak equivalence of $(\infty,n-1)$-categories so that the meaning of the Segal condition at the next level is correct. The purpose of our remarks is merely to show a way that the concept of $(\infty,2)$-category can be defined, rather than to study the homotopy theory of $(\infty,2)$-categories themselves.
\end{rmk}

Faonte \cite{faonte-homotopy} has used the concept of a Segal category enriched over $\infty$-categories as a natural way to formulate $\Ainfcat$ as an $(\infty,2)$-category. In this version of $\Ainfcat$, the objects are $A_{\infty}$-categories, and for two $A_{\infty}$-categories $A$ and $B$, the $\infty$-category of $1$-morphisms is
\begin{equation*}
  \Hom_{\Ainfcat}(A,B) = N_{A_{\infty}}(\Fun_{A_{\infty}}(A,B)),
\end{equation*}
where $\Fun_{A_{\infty}}$ is the $A_{\infty}$-category of unital $A_{\infty}$-functors, and $N_{A_{\infty}}$ is the nerve construction of Faonte and Tanaka. Faonte constructed a strictly associative composition in this setting.

Finally we can formulate our Ansatz about $\Sympcat$ as an $(\infty,2)$-category, namely that it may be constructed as a Segal category enriched over $A_{\infty}$-categories, where the binary composition functors are the Ma'u-Wehrheim-Woodward functors. This concept is formulated as Definitions \ref{defn:good-sol1} and \ref{defn:good-sol2} in the main body of the paper.

\bibliographystyle{plain}
\bibliography{monoidal}

\end{document}